\documentclass[11pt,letterpaper,reqno]{amsart}
\usepackage[a4paper,margin=27mm]{geometry}
\usepackage[T1]{fontenc}
\usepackage{lmodern}
\usepackage{amsmath,amssymb,amsthm,mathtools}
\usepackage[colorlinks=true,urlcolor=blue,linkcolor=blue,citecolor=blue]{hyperref}
\hypersetup{pdftitle={Sharp bounds for toral eigenfunctions and threefold additive energy on circles}}
\newtheorem{theorem}{Theorem}[section]
\newtheorem{lemma}[theorem]{Lemma}
\newtheorem{proposition}[theorem]{Proposition}
\newtheorem{corollary}[theorem]{Corollary}
\theoremstyle{remark}

\numberwithin{equation}{section}
\newcommand{\Z}{\mathbb Z}
\newcommand{\T}{\mathbb T}
\newcommand{\N}{\operatorname N}
\title[Threefold additive energy and toral eigenfunctions]{Sharp bounds for toral eigenfunctions and threefold additive energy on circles}
\author[G. Xie]{Guiyu Xie}
\address{Qiuzhen College, Tsinghua University, Beijing 100084, P.R. China}
\email{xiegy25@mails.tsinghua.edu.cn}
\author[C. Zhang]{Cheng Zhang}
\address{Yau Mathematical Sciences Center, Tsinghua University, Beijing 100084, P.R. China}
\email{czhang98@tsinghua.edu.cn}
\date{}
\subjclass[2010]{Primary 42B05; Secondary 11B30, 11P21, 58J50}
\keywords{Laplace eigenfunction, flat torus, additive energy,
lattice point}
\begin{document}
\begin{abstract}
For the lattice circle
$P_n=\{z\in\Z^2:|z|^2=n\}$, we prove that the threefold additive energy  $J_3(P_n)\le C|P_n|^3$, with an absolute constant $C$.
This establishes the optimal bound suggested by Krishnapur--Kurlberg--Wigman
and Bombieri--Bourgain. We also prove a uniform weak $L^6$ bound for
$L^2$-normalized eigenfunctions on the square torus, and hence uniform
$L^p$ bounds for every $1\le p<6$. This extends the classical $L^4$ bounds
of Cooke and Zygmund and establishes Bourgain's conjecture in this range.
Applications include  mean-square convergence for $L^{6/5}$
observables and a logarithm-free remainder in the nodal length variance
of arithmetic random waves.
\end{abstract}
\maketitle

\section{Introduction}
For a finite set $S\subset\mathbb R^2$ and an integer $k\ge1$, define
the $k$-fold additive energy by
\begin{equation}\label{eq:energy:def}
 J_k(S)=\#\left\{(z_1,\ldots,z_{2k})\in S^{2k}:
                \sum_{j=1}^kz_j=\sum_{j=k+1}^{2k}z_j\right\}.
\end{equation}
Tuples are ordered and repetitions are allowed. In this paper we study
the case $S=P_n$, the  lattice circle
\begin{equation}\label{eq:circle:def}
 P_n=\{z\in\Z^2:|z|^2=n\},\qquad n\ge1.
\end{equation}
Six-point correlations determine sixth moments through \eqref{eq:energy}
and enter the nodal length variance of arithmetic random waves
\cite{KKW}. Our first result gives their optimal order.

\begin{theorem}\label{thm:main}
There is an absolute constant $C$ such that, for every $n\ge1$,
\begin{equation}\label{eq:main}
 J_3(P_n)\le C|P_n|^3.
\end{equation}
\end{theorem}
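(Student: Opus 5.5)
The plan is to reduce Theorem~\ref{thm:main} to counting genuinely non-degenerate solutions, and to control those with an averaged incidence bound. Write $N=|P_n|$ and identify $\Z^2$ with the Gaussian integers, so that $z\bar z=n$ for $z\in P_n$. Two elementary facts will be used throughout:
\begin{itemize}
\item[(i)] For $v\neq0$ the equation $z_1+z_2=v$ has at most two solutions $(z_1,z_2)\in P_n^2$, since $z_1$ lies on two distinct circles of radius $\sqrt n$ (centred at $0$ and at $v$).
\item[(ii)] By (i), $J_2(P_n)\le 3N^2$ (Zygmund).
\end{itemize}
A remark that fixes the shape of the problem: conjugating $z_1+z_2+z_3=s$ and using $\bar z=n/z$ gives $e_2(z_1,z_2,z_3)/e_3(z_1,z_2,z_3)=\bar s/n$. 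Hence every triple with sum $s$ consists of the roots of
\[
X^3-sX^2+\tfrac{\bar s}{n}tX-t,\qquad |t|=n^{3/2},
\]
which is a one-parameter family. So there is no hidden second equation, and the entire difficulty is combinatorial-arithmetic.

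\emph{Step 1 (degenerate solutions).} I will split $J_3(P_n)$ according to whether the multisets $\{z_1,z_2,z_3\}$ and $\{z_4,z_5,z_6\}$ coincide. If they coincide, there are at most $6N^3$ solutions. If they share an element, cancelling it leaves $z_1+z_2=z_4+z_5$. By (i) this forces either $\{z_1,z_2\}=\{z_4,z_5\}$ or $z_1+z_2=z_4+z_5=0$. In both cases the count is $O(N^3)$, using (ii) for the free variable. It then remains to bound the number $J_3^*$ of solutions in which the two triples are disjoint.

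\emph{Step 2 (reduction to a difference-weighted energy).} Rewrite the equation as $z_1+z_2-z_4-z_5=u$ with $u=z_6-z_3$. Let $D(u)$ denote the number of representations $z_1+z_2-z_4-z_5=u$. By (i) applied to differences, each $u\ne0$ arises as $z_6-z_3$ in at most two ways. Hence
\[
J_3^*\le N\,J_2(P_n)+2\sum_{0\ne u\in P_n-P_n}D(u)\le 3N^3+2\sum_{0\ne u\in P_n-P_n}D(u).
\]
The total mass $\sum_u D(u)=N^4$ is far too large. The content of the theorem is therefore that $D$ has average $O(N)$ on the set $P_n-P_n$, which has size $\asymp N^2$. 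For such $u$ one can fix $z_5$ and use (i) on $z_1+z_2=u+z_4+z_5$. This bounds $D(u)$ by twice the number of pairs $(z_4,z_5)$ with $u+z_4+z_5\in P_n+P_n$. Equivalently, we must count points of the translated sumset $u+P_n+P_n$ that land on $P_n+P_n$, summed over $u\in P_n-P_n$.

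\emph{Step 3 (main obstacle).} Bounding $\sum_{u\in P_n-P_n}D(u)\ll N^3$ is the heart of the matter; Bombieri--Bourgain only obtain $N^{7/2}$ here. I would first split $P_n$ into short arcs of length $n^{1/2-\delta}$. By Cilleruelo--C\'ordoba (and Jarn\'ik for very short arcs), each such arc contains $O_\delta(1)$ lattice points. Within a short arc the sumset structure is essentially that of a parabola. Between well-separated arcs, the condition $|w-z|^2=n$ defines transversal circle families, and I would apply a Szemer\'edi--Trotter/Pach--Sharir type incidence bound for points and circles of fixed radius to the configuration ($\{u+z_4+z_5\}$ versus circles centred at points of $P_n$). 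This yields a bound of the form $N^{3-\eta}\cdot N^{\text{small}}$ in the generic range. The difficulty is that incidence bounds alone lose powers of $N$ exactly when $N$ is small relative to $n$. There the arithmetic must enter: all $z_i$ divide a fixed factorisation of $n$ in $\Z[i]$. I would use this to show that a non-degenerate relation among six points forces a coincidence of Gaussian-prime factorisations. The count of such coincidences is then bounded by $O(N^3)$ via the multiplicative structure $P_n\simeq\{\text{unit}\}\times\prod_{\pi\mid n}\{\pi^{a}\bar\pi^{b}\}$.
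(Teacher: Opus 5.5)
There is a genuine gap: Steps~1--2 only restate the problem, and Step~3, where the whole theorem lives, is a list of tools rather than an argument. Steps~1--2 are correct. Up to $O(N^3)$, $J_3(P_n)$ is comparable to $\sum_{u\in P_n-P_n}D(u)$, so nothing has been gained yet.

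Several claims in Step~3 fail as stated.
\begin{itemize}
\item Cilleruelo--C\'ordoba controls arcs of length about $n^{1/4-\epsilon}$, i.e.\ (radius)$^{1/2-\epsilon}$. The $O_\delta(1)$ bound you invoke for arcs of length $n^{1/2-\delta}=(\text{radius})^{1-2\delta}$ is the open Cilleruelo--Granville conjecture.
\item Point--circle incidence bounds are insensitive to the arithmetic of $P_n$. You give no computation showing they beat the trivial bound in your configuration. Moreover, any argument that works through parabola-like local structure for general point sets cannot by itself give $O(N^3)$, since $\{(j,j^2)\}_{j\le M}$ has $J_3\gtrsim M^3\log M$.
\item Since $N=n^{o(1)}$ always, there is no ``generic range'' in which $N$ is large relative to $n$.
\item The assertion that a non-degenerate six-point relation ``forces a coincidence of Gaussian-prime factorisations'' whose number is $O(N^3)$ is unsupported. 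Such relations do occur, e.g.\ $(1,8)+(7,4)+(-8,1)=(-1,8)+(-7,4)+(8,1)$ in $P_{65}$. The available count is the Evertse--Schlickewei--Schmidt bound of Lemma~\ref{lem:unit:nondegenerate}, which gives only $N_6\le M_ne^{C_*(r+1)}$. For squarefree $n$ with $r$ prime factors, $M_n=2^{r+2}$, so this exceeds $M_n^3$ by an unbounded factor once $C_*>\log 4$.
\end{itemize}

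What is missing is a mechanism for exactly the regime the unit-equation bound cannot reach; this is the core of the paper. There, Proposition~\ref{prop:unit} is used only to show the following: if $L:=\max_{d\mid n}J_3(P_d)/M_d^3>16$, then a maximizing $d$ with $s$ prime factors has $\log M_d<C_*(s+1)$. That regime is handled by induction over divisors.
\begin{itemize}
\item Write $d=p^\nu m$ with $p$ the largest prime, and split $P_d=A\sqcup B$ with $A=\bar\varpi P_{d/p}\subset\bar\varpi\Z[i]$ and $B=\varpi^\nu P_m$ disjoint from $\bar\varpi\Z[i]$.
\item Hence $\int f_A^5f_B=0$, and the remaining cross terms are controlled by $R_p(m)=\|E_{p\Z^2}f_{P_m}^{2}\|_3$ (Proposition~\ref{prop:recurrence}). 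Maximality of $d$ turns this into $L\lesssim(R_p(m)/M_m)^3$.
\item An $L^3$ estimate along the $p$-adic filtration (Proposition~\ref{prop:local}) bounds this by $\frac43+C\rho_p(P_m)\log(2M_m)$. This step uses the log-lossy bound \eqref{eq:log-energy} on residue classes, which itself comes from the weak-$L^6$ $3$-adic estimate.
\item The packing map $(z,u)\mapsto uz/\bar u$ gives $\rho_p(P_m)\le 1/(s+1)$ (Proposition~\ref{prop:residues}). So the logarithm is absorbed precisely when $\log M_d\lesssim s$.
\end{itemize}
Your plan contains neither the divisor-maximization recursion nor a logarithmic estimate compensated by residue-class equidistribution. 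The short-arc/incidence route does not replace them.
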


This establishes the upper bound suggested by Krishnapur--Kurlberg--Wigman
\cite[p.~710]{KKW} and
Bombieri--Bourgain \cite{BB}. The trivial solutions with $(z_1,z_2,z_3)=(z_4,z_5,z_6)$ in \eqref{eq:energy:def} give $J_3(P_n)\ge |P_n|^3$, so the exponent
$3$ is optimal. 

Equip $\T^2=\mathbb R^2/\Z^2$ with normalized Haar measure.
For $P_n\ne\varnothing$ and every integer $k\ge3$, orthogonality
and \eqref{eq:main} give
\begin{equation}\label{eq:higher}
 J_k(P_n)=\int_{\T^2}\Bigl|\sum_{z\in P_n}e^{2\pi iz\cdot x}\Bigr|^{2k}\,dx
 \le |P_n|^{2k-6}J_3(P_n)\le C|P_n|^{2k-3}.
\end{equation}
The empty-circle case is immediate.

Every eigenfunction of $-\Delta$ on $\T^2$ with eigenvalue $4\pi^2n$,
$n\ge1$, has the form
\begin{equation}\label{eq:app:weighted-sum}
 F(x)=\sum_{z\in P_n}a_z e^{2\pi iz\cdot x},\qquad a_z\in\mathbb C.
\end{equation}
 Parseval's identity gives
\begin{equation}\label{eq:app:parseval}
 \|F\|_2^2=\sum_{z\in P_n}|a_z|^2.
\end{equation}
The $L^p$ bounds of $F$ are directly related to $J_k(P_n)$, which corresponds to $a_z\equiv 1$ by \eqref{eq:higher}.

For a measurable function $h$ on $\T^2$, use the weak $L^q$ quasi-norm
\begin{equation}\label{eq:lp:weak-norm}
 \|h\|_{q,\infty}:=\sup_{t>0}t\,|\{x\in\T^2:|h(x)|>t\}|^{1/q}
 \qquad(q>1).
\end{equation}
Our second result allows arbitrary coefficients in
\eqref{eq:app:weighted-sum}.

\begin{theorem}\label{thm:lp:weak}
There is an absolute constant $C_{\mathrm w}\ge1$ such that every
eigenfunction on the square torus satisfies
\begin{equation}\label{eq:lp:weak}
 \|F\|_{6,\infty}\le C_{\mathrm w}\|F\|_2.
\end{equation}
For every $1\le p<6$, there is a constant $C_p$, depending only on $p$,
such that
\begin{equation}\label{eq:lp:subcritical}
 \|F\|_p\le C_p\|F\|_2.
\end{equation}
\end{theorem}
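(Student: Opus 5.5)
The plan is to deduce \eqref{eq:lp:weak} from an $L^2$ bound for the cube $F^3$ once its "degenerate" part is removed. The bound \eqref{eq:lp:subcritical} then follows by interpolation. Normalize $\|F\|_2=1$, so $\sum_z|a_z|^2=1$, $|a_z|\le1$ and $\|F\|_\infty\le|P_n|^{1/2}$. Expanding,
\[
 F(x)^3=\sum_{w}b_w e^{2\pi i w\cdot x},\qquad b_w=\sum_{\substack{z_1,z_2,z_3\in P_n\\ z_1+z_2+z_3=w}}a_{z_1}a_{z_2}a_{z_3}.
\]
Split the triples into \emph{degenerate} ones, where $z_i+z_j=0$ for some $i\ne j$, and \emph{nondegenerate} ones. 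This gives $F^3=D+G$. By inclusion--exclusion over which pairs are antipodal, the degenerate part is
\[
 D=3cF+R,\qquad c=\sum_{z}a_za_{-z}.
\]
Here $|c|\le1$ by Cauchy--Schwarz. The correction $R$ collects triples with two antipodal pairs, which forces $z_1=z_2=-z_3$ up to permutation. So $R$ is a bounded multiple of $\sum_z a_z^2a_{-z}e^{2\pi iz\cdot x}$, and hence $|R|\le C\sum_z|a_z|^3\le C$. Therefore $|D|\le C_0(|F|+1)$ pointwise with an absolute $C_0$.

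The key estimate to establish is
\[
 \|G\|_2^2=\sum_w\Bigl|\sum_{\text{nondeg.}}a_{z_1}a_{z_2}a_{z_3}\Bigr|^2\le C_1 ,
\]
with $C_1$ absolute. Granting it, fix $t\ge t_0:=\max(2,\sqrt{4C_0})$. On $E_t=\{|F|>t\}$ we have $C_0(|F|+1)\le 2C_0|F|\le|F|^3/2$, so $|G|\ge|F|^3/2>t^3/2$ there. Chebyshev's inequality then gives $|E_t|\le 4C_1t^{-6}$. For $t<t_0$ we use the trivial bound $|E_t|\le1\le t_0^6t^{-6}$. Together these give \eqref{eq:lp:weak}.

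For \eqref{eq:lp:subcritical}, take $2<p<6$ and use the layer-cake formula:
\[
 \|F\|_p^p=p\int_0^\infty t^{p-1}|E_t|\,dt\le p\int_0^1t^{p-1}dt+pC_{\mathrm w}^6\int_1^\infty t^{p-7}dt .
\]
This is finite with a bound depending only on $p$. For $1\le p\le2$ the bound is just H\"older on the probability space $\T^2$.

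The main obstacle is the bound for $\|G\|_2^2$ with arbitrary coefficients. Write $r^*(w)$ for the number of nondegenerate representations $w=z_1+z_2+z_3$. Cauchy--Schwarz in the inner sum gives $\|G\|_2^2\le\sup_w r^*(w)\cdot\sum|a_{z_1}a_{z_2}a_{z_3}|^2=\sup_w r^*(w)$. So the natural first attempt is to show that $r^*(w)$ is bounded by an absolute constant. This is exactly the structural input one expects inside the proof of Theorem~\ref{thm:main}. Degenerate representations alone account for the $\asymp|P_n|^3$ trivial-type solutions, so $J_3(P_n)\le C|P_n|^3$ says nondegenerate coincidences are rare on average. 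I would try to extract from that argument a \emph{uniform} statement: for fixed $w\notin P_n$ (and handling $w\in P_n$ separately), the pairs $(z_1,z_2)$ with $|w-z_1-z_2|^2=n$ lie on the intersection of two circles in a suitable parametrization, so there are $O(1)$ of them. Here Gaussian-integer factorization and the rigidity of lattice points on circles would supply the uniformity.

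If only an averaged statement is available, I would fall back on a Schur-test or dyadic decomposition of $\{w\}$ according to the size of $r^*(w)$. Using $\sum_w r^*(w)^2\le J_3(P_n)\le C|P_n|^3$, the aim would be to control the contribution of the rare $w$ with large $r^*(w)$ by a weak-type (rather than strong-type) argument. The weak $L^6$ formulation of the theorem is precisely what leaves room for this loss.
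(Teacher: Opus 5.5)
There is a genuine gap. Your reduction itself is fine: $D=3cF+R$ with $|D|\le3|F|+3$ is correct, and the layer-cake step for $p<6$ matches the paper. The problem is the key estimate $\|G\|_2^2\le C_1$ for arbitrary normalized coefficients. Since $G=F^3-D$, it is equivalent to the uniform \emph{strong} bound $\|F\|_6\le C\|F\|_2$: one direction is $\|F\|_6^6\le2\|D\|_2^2+2\|G\|_2^2\le8C_0^2+2C_1$, and the converse is $\|G\|_2\le\|F\|_6^3+\|D\|_2$. So your Chebyshev step uses none of the slack in the weak norm. You are in effect trying to prove the endpoint $p=6$ of Bourgain's conjecture. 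The paper does not obtain this: it gets only the loss $(\log n/\log\log n)^{1/6}$ in Proposition~\ref{prop:lp:strong}, and uniform $L^6$ bounds only for flat coefficients (Corollary~\ref{cor:flat}).

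Neither of your routes reaches that estimate. The two-circle argument does not give $\sup_w r^*(w)\le C$. For fixed $w$ and $z_1$ there are at most two choices of $z_2$, but $z_1$ ranges over all of $P_n$, so you only get $r^*(w)\le2M_n$. The arithmetic input the paper has for nondegenerate relations (Lemma~\ref{lem:unit:nondegenerate}) is a unit-equation bound of size $e^{C_*(r+1)}$, which can be a positive power of $M_n$. In fact a uniform bound on $r^*$ would prove Theorem~\ref{thm:main} in two lines, because degenerate triples have their sum in $P_n$ with at most $3M_n$ representations each. That would make Section~\ref{sec:closure} superfluous.

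Your fallback via $\sum_w r^*(w)^2\le J_3(P_n)\le CM_n^3$ only handles flat coefficients. If $a$ is spread over a subset $S\subset P_n$ with $|S|\ll M_n$, the energy of the full circle says nothing. What you would need is $J_3(S)\le C|S|^3$ for every $S\subset P_n$, which is again the open endpoint; the paper only gets $C|S|^3\log(2+|S|)$ in \eqref{eq:log-energy}.

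The missing idea is to use the weak norm inside the harmonic analysis, not after it. The paper first rescales away powers of $3$ and rotates the four residue classes of $\overline{z_0}P_m$. Each class then becomes integer points on the graph of a $3$-adic function $\varphi$ with $|\varphi(u)+\varphi(v)-\varphi(w)-\varphi(z)|_3=|u-w|_3|u-z|_3$. For such graphs it bootstraps the best constant $K_S$ using four ingredients:
\begin{itemize}
\item a weak-type Burkholder inequality for $|U|^2$ along the filtration $3^j\Z\times\Z$ (Lemma~\ref{lem:projection:martingale});
\item martingale differences with disjoint Fourier supports, each made of transversal cross terms;
\item a bilinear bound $\|U_1U_2\|_{3,\infty}\le CK$ that is only linear in $K$, proved by truncation and a stopping time, where the weak norm yields $\alpha^6|\{|U_1U_2|>\alpha^2\}|\le CK^2$;
\item telescoping of the masses $\sum_rA_{j,r}^{3/2}$.
\end{itemize}
Together these give $K_S^4\le C(1+K_S^2)$, hence a uniform weak $L^6$ bound. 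This never controls the nondegenerate counts $r^*(w)$ and never passes through a strong $L^6$ estimate.
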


This extends  the uniform $L^4$ bounds of Cooke \cite{Cooke} and Zygmund \cite{Zygmund} and confirms Bourgain's conjecture \cite{BourgainMoment} on the  uniform  $L^p$ bounds for $p<6$.

\subsection{Background and related work}
Krishnapur, Kurlberg and Wigman \cite[Theorem 2.2]{KKW} recorded
Bourgain's bound $J_3(P_n)=o(|P_n|^4)$ and 
conjectured $J_3(P_n)\le C_\varepsilon |P_n|^{3+\varepsilon}$ for every
$\varepsilon>0$. Bombieri and Bourgain
\cite{BB} proved $J_3(P_n)\le C |P_n|^{7/2}$. Both papers cautiously speculated on the possibility that the stronger $O(|P_n|^{3})$ bound might hold.  
  See Bourgain and Demeter \cite[Questions~2.13--2.14 and p.~363]{BD15} for their relation to the Unit Distance Conjecture, which was recently disproved \cite{OpenAIUnitDistance}.
See also Li \cite{LiEffective}, Guth--Maldague--Wang \cite{GMW},
Mudgal \cite{MudgalSpheres,MudgalVMVT}, and Blomer--Br\"udern
\cite{BlomerBrudern} for related estimates.

Recently, Cushman, Demeter and Wu \cite[Theorem 1.1]{CDW} proved
$J_3(S)\le C_\epsilon|S|^{3+\varepsilon}$ for finite sets on the graph of an arbitrary strictly convex function. For finite sets $S$ of rational points on the parabola or the unit circle,
the stronger bound
\begin{equation}\label{eq:intro:LY}
	J_3(S)\le C|S|^3\log(2+|S|)
\end{equation}
holds. For the parabola, this follows from Deng--Fan--Guo--Guo--Luo
\cite[Theorems 1.1--1.2]{DFGGL} by rescaling to integer points.
Li and Yung \cite[Theorem 1.1]{LY} independently proved it for both curves. For the parabola, the logarithm is necessary:
$S=\{(j,j^2):1\le j\le M\}$ has $J_3(S)\ge C M^3\log M$ for $M\ge2$;
see the discussion after \cite[Theorem 1.1]{LY}. Theorem~\ref{thm:main} establishes the cubic bound suggested in
\cite{KKW} and \cite{BB} for lattice circles.
 The proof exploits the arithmetic
structure of the lattice circle $P_n$.

For $L^2$ normalized eigenfunctions on $\mathbb{T}^2$, the uniform $L^4$
bound is due to Cooke \cite{Cooke} and Zygmund \cite{Zygmund}.
Bourgain \cite{BourgainMoment}
made the conjecture of uniform $L^p$ bounds for every finite $p$.
  Theorem~\ref{thm:lp:weak} proves Bourgain's conjecture 
for all $1\le p<6$.  For $L^p$ eigenfunction bounds in higher dimensions, Bourgain \cite{B1993} made the Discrete Restriction Conjecture. See also 
Bourgain--Demeter \cite[Conjecture~2.6 and Theorem~2.7]{BD15}.

\subsection{Proof outline}
Section~\ref{sec:prelim} records Fourier projection estimates and the
Gaussian-integer factorization of lattice circles.
Section~\ref{sec:eigenfunction-lp} proves Theorem~\ref{thm:lp:weak}
by reduction to the $3$-adic graph estimate (Lemma~\ref{lem:lp:graph-weak}),
using the bilinear estimate (Lemma~\ref{lem:lp:bilinear}) and a telescoping sum,
then derives the logarithmic sixth-moment bound.
Section~\ref{sec:closure} proves Theorem~\ref{thm:main} by maximizing
normalized energy over divisors.
Section~\ref{sec:proposition-proofs} establishes the prime-power recurrence
(Proposition~\ref{prop:recurrence}), the local estimate
(Proposition~\ref{prop:local}), the residue-class bound
(Proposition~\ref{prop:residues}), and the bound for six-point relations
(Proposition~\ref{prop:unit}).
Section~\ref{sec:applications} applies these results to endpoint eigenfunction bounds,
observables, triple sumsets, and nodal length fluctuations.

The lattice circle is essential to our log-free energy estimate.
Unique factorization in Gaussian integers gives the exact prime-power decomposition and point counts needed
to close the recurrence. Conjugating Gaussian prime divisors preserves norm
and integrality, so the complete circle contains every point used in the
residue-class bound. Combined with the six-point relation bound
(Proposition~\ref{prop:unit}), these properties remove the logarithm,
which is unavoidable for general finite subsets of the parabola.

\subsection*{Acknowledgments.}
The authors would like to thank Shukun Wu for helpful discussions. The authors are partially supported by National Key R\&D Program of China\newline
No.~2024YFA1015300 and NSFC Grant No.~12371097.
\section{Preliminaries}\label{sec:prelim}
We collect Fourier projection estimates and describe the Gaussian-integer
factorization of lattice circles.

\subsection{Fourier projections and positivity}\label{subsec:fourier-tools}
Let $M_n=|P_n|$ for $n\ge1$, with $P_n$ as in \eqref{eq:circle:def}.
All logarithms are natural. Unless fixed explicitly, $C$ denotes an
absolute constant that may change from line to line.
We identify $\Z^2$ with $\Z[i]$ and write $\N(z)=z\bar z$.
For a set $E$, $\mathbf1_E$ denotes its indicator.
For an integer $q\ge1$, $\sum_{r\bmod q}$ denotes summation over
all residue classes in $\Z/q\Z$.
For a finite set $S\subset\Z^2$, let
\begin{equation}\label{eq:fourier:sum}
 f_S(x)=\sum_{z\in S}e^{2\pi i z\cdot x}.
\end{equation}
For $h\in L^1(\T^2)$, let
\begin{equation}\label{eq:fourier:coefficients}
 \widehat h(v)=\int_{\T^2}h(x)e^{-2\pi i v\cdot x}\,dx,
 \qquad v\in\Z^2.
\end{equation}
Unless indicated otherwise, integrals and $L^q$ norms are on $\T^2$.
The weak $L^q$ space $L^{q,\infty}(\T^2)$ consists of functions with
finite quasi-norm \eqref{eq:lp:weak-norm}.
For finite $S\subset\Z[i]$ and integers $k\ge1$, Fourier orthogonality,
\eqref{eq:energy:def}, and \eqref{eq:fourier:sum} give
\begin{equation}\label{eq:energy}
 J_k(S)=\int_{\T^2}|f_S|^{2k}.
\end{equation}
The same expansion with $k=1$ gives
\begin{equation}\label{eq:fourier:counts}
 \widehat{|f_S|^2}(v)=\#\{(u,w)\in S^2:u-w=v\}\ge0.
\end{equation}
If $S=-S$, then $\overline{f_S}=f_{-S}=f_S$, so $f_S$ is real and
$f_S^2=|f_S|^2$ in \eqref{eq:fourier:counts}.
For the sets in \eqref{eq:circle:def}, multiplication by $i$
has orbits of size four, and conjugation preserves the norm. Hence
\begin{equation}\label{eq:circle:symmetries}
 iP_n=P_n=-P_n=\overline{P_n},\qquad
 M_n\ge4\quad(P_n\ne\varnothing).
\end{equation}

For a full-rank sublattice $\Lambda\subset\Z^2$, let
$\Lambda^\perp=\{y\in\T^2:e^{2\pi i v\cdot y}=1\text{ for all }v\in\Lambda\}$.
For a trigonometric polynomial $h$, character orthogonality on this
finite group gives
\begin{equation}\label{eq:projection:identity}
 E_\Lambda h(x):=\sum_{v\in\Lambda}\widehat h(v)e^{2\pi i v\cdot x}
 =\frac1{|\Lambda^\perp|}\sum_{y\in\Lambda^\perp}h(x+y).
\end{equation}
The averaging formula defines $E_\Lambda$ on every $L^q$,
$1\le q\le\infty$. It is self-adjoint on $L^2$, preserves integrals,
and, for every full-rank sublattice $\Lambda'\subset\Lambda$, satisfies
\begin{equation}\label{eq:projection:properties}
 \begin{gathered}
 E_\Lambda h\ge0\quad(h\ge0),\qquad
 \|E_\Lambda h\|_q\le\|h\|_q\quad(1\le q\le\infty),\\
 E_{\Lambda'}E_\Lambda=E_{\Lambda'}\quad(\Lambda'\subset\Lambda).
 \end{gathered}
\end{equation}
For bounded $g$ satisfying $g(x+y)=g(x)$ for $y\in\Lambda^\perp$
and integrable $h$, averaging in \eqref{eq:projection:identity} gives
\begin{equation}\label{eq:projection:module}
 E_\Lambda(gh)=gE_\Lambda h,\qquad
 \int gh=\int gE_\Lambda h.
\end{equation}
For every nonsingular integer $2\times2$ matrix $A$, the map $A^T:\T^2\to\T^2$
is a surjective endomorphism and preserves Haar measure. Thus
\begin{equation}\label{eq:scaling}
 f_{AS}(x)=f_S(A^Tx),\qquad
 \|h(A^T\cdot)\|_q=\|h\|_q\quad(1\le q\le\infty).
\end{equation}
For $0\ne\gamma\in\Z[i]$, multiplication by $\gamma$ has integer matrix
$A_\gamma$ with $\det A_\gamma=\N(\gamma)>0$.

\begin{samepage}
The strong and weak estimates below follow from complex and real interpolation,
respectively \cite[Chapter~5]{BL}; see also
\cite[Lemma~3.6]{DFGGL} for the weak estimate.
\begin{lemma}\label{lem:projection:summation}
Let $(\Pi_\nu)_{\nu\in I}$ be a finite family of Fourier projections on
$\T^2$ with pairwise disjoint frequency supports and
$\|\Pi_\nu\|_{L^\infty\to L^\infty}\le2$.
For $f_\nu=\Pi_\nu f_\nu$, we have, with $C$ absolute,
\begin{align}
 \Bigl\|\sum_\nu f_\nu\Bigr\|_3^3
 &\le2\Bigl(\sum_\nu\|f_\nu\|_3^{3/2}\Bigr)^2,
 \label{eq:one:scale}\\
 \Bigl\|\sum_\nu f_\nu\Bigr\|_{3,\infty}
 &\le C\Bigl(\sum_\nu\|f_\nu\|_{3,\infty}^{3/2}\Bigr)^{2/3},
 \label{eq:projection:sum:weak}
\end{align}
\end{lemma}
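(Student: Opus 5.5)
Both estimates will come from interpolation between two endpoint bounds for the map sending a family $(f_\nu)$ to $\sum_\nu f_\nu$. The natural scale is the mixed space $\ell^{p/2}(L^p)$ at $p=3$, i.e. the quantity $\bigl(\sum_\nu\|f_\nu\|_p^{p/2}\bigr)^{2/p}$. I will prove the bound at $p=2$ and at $p=\infty$, interpolate complexly to get \eqref{eq:one:scale}, and interpolate with the real method to get \eqref{eq:projection:sum:weak}.

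\emph{Endpoint $p=2$.} The projections have disjoint frequency supports, so by orthogonality
$$\Bigl\|\sum_\nu f_\nu\Bigr\|_2^2=\sum_\nu\|f_\nu\|_2^2.$$
This is a norm-one bound from $\ell^{1}(L^2)$ into $L^2$, since the $\ell^1$ sum dominates.

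\emph{Endpoint $p=\infty$.} Here the triangle inequality gives
$$\Bigl\|\sum_\nu f_\nu\Bigr\|_\infty\le\sum_\nu\|f_\nu\|_\infty,$$
which is a bound from $\ell^{1/2}$-type control at $L^\infty$. The difficulty is that $\ell^{1/2}$ is only a quasi-norm, so complex interpolation cannot be applied naively. I will instead linearize. Set $T(g_\nu)_\nu=\sum_\nu\Pi_\nu g_\nu$, defined for arbitrary families $g_\nu$, and interpolate $T$ between
$$\ell^1(L^2)\to L^2\quad(\text{norm}\le1)\qquad\text{and}\qquad \ell^1(L^\infty)\to L^\infty\quad(\text{norm}\le2),$$
where the second uses $\|\Pi_\nu\|_{L^\infty\to L^\infty}\le2$. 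Interpolating with $\theta=1/3$ gives $\ell^1(L^3)\to L^3$, which is too weak: the target bound requires the $\ell^{3/2}$ exponent.

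To recover $\ell^{3/2}$ I will use homogeneity plus a weighting trick. Write $f_\nu=\lambda_\nu g_\nu$ with $\|g_\nu\|_3=1$, so that only the scalars $\lambda_\nu$ carry the $\ell$-exponent. Then I use Stein interpolation on an analytic family in which the powers of $\lambda_\nu$ vary, $\lambda_\nu^{a(z)}$ with $a$ affine, and in which $g_\nu$ is replaced by $|g_\nu|^{b(z)}\operatorname{sgn}g_\nu$ and projected by $\Pi_\nu$. The exponents are chosen so that the $L^2$ side sees $\sum\lambda_\nu^{\,\cdot}\|\cdot\|_2^2$ through orthogonality, while the $L^\infty$ side sees $\sum\lambda_\nu^{\,\cdot}$ multiplied by the $L^\infty$ norm. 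Matching the exponents at $\theta=1/3$ should produce
$$\Bigl\|\sum f_\nu\Bigr\|_3^3\le 2^{\,\cdot}\Bigl(\sum_\nu\lambda_\nu^{3/2}\Bigr)^2.$$
The factor $2$ comes from the $L^\infty$ endpoint constant raised to the appropriate power.

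The main obstacle is this bookkeeping, which must land exactly on the constant $2$ and the exponent $3/2$. If Stein interpolation proves awkward, the alternative is the Bergh--L\"ofstr\"om framework: identify $\ell^{p/2}(L^p)$ as a complex interpolation space between $\ell^1(L^2)$ and a suitable weighted space, exploiting $\sum\lambda^{p/2}$ homogeneity. This is the reference \cite[Chapter~5]{BL} cited before the lemma, and I would adopt it directly.

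The weak estimate follows the same route with real interpolation, the $K$-method, on the same pair. This yields a Lorentz-space target $L^{3,\infty}$ with an unspecified absolute constant, exactly as in \cite[Lemma~3.6]{DFGGL}, whose argument I would adapt verbatim.
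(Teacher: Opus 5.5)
There is a genuine gap, and it sits in your first endpoint. Orthogonality of the disjoint frequency supports gives, for arbitrary families,
\[
\Bigl\|\sum_\nu\Pi_\nu g_\nu\Bigr\|_2^2=\sum_\nu\|\Pi_\nu g_\nu\|_2^2\le\sum_\nu\|g_\nu\|_2^2,
\]
which is a norm-one bound $\ell^2(L^2)\to L^2$. By recording it only as an $\ell^1(L^2)$ bound, you discard exactly the information that produces the exponent $3/2$. At $L^\infty$ the triangle inequality gives $\ell^1(L^\infty)$ control, not $\ell^{1/2}$, so there is no quasi-norm issue anywhere. The correct interpolation scale is $\ell^{p'}(L^p)$, which agrees with your $\ell^{p/2}(L^p)$ only at $p=3$.

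Interpolate $T(g_\nu)_\nu=\sum_\nu\Pi_\nu g_\nu$ between $\ell^2(L^2)\to L^2$ (norm $1$) and $\ell^1(L^\infty)\to L^\infty$ (norm $2$). At $\theta=1/3$ this gives $1/q=(1-\theta)/2+\theta=2/3$, so $T\colon\ell^{3/2}(L^3)\to L^3$ with norm at most $2^{1/3}$. Taking $g_\nu=f_\nu=\Pi_\nu f_\nu$ and cubing yields \eqref{eq:one:scale} with constant exactly $2$. This is the complex interpolation the paper invokes. No analytic family or weights are needed.

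As written, your argument does not reach this, for three reasons.

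\textbf{The main route.} Your pair interpolates only to $\ell^1(L^3)$, as you yourself note.

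\textbf{The Stein repair.} You leave its exponents unspecified, and it closes only if the $L^2$ line uses the square-summed identity above. For instance, take $\lambda_\nu^{a(z)}$ with $a(0)=3/4$, $a(1)=3/2$, and $|g_\nu|^{b(z)}\operatorname{sgn}g_\nu$ with $b(0)=3/2$, $b(1)=0$. The two lines then give $(\sum_\nu\lambda_\nu^{3/2})^{1/2}$ and $2\sum_\nu\lambda_\nu^{3/2}$. So the repair works precisely after you reinstate the $\ell^2(L^2)$ endpoint.

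\textbf{The Bergh--L\"ofstr\"om alternative.} This cannot work at all. Interpolating $\ell^1(L^2)$ with any weighted $\ell^1$ space yields a weighted $\ell^1$ space, because weights never change the sequence exponent.

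The weak estimate has the same defect if you use ``the same pair.'' With the correct pair, the $K$-functional computation is short. Normalize $\sum_\nu\lambda_\nu^{3/2}=1$ with $\lambda_\nu=\|f_\nu\|_{3,\infty}$, fix $\alpha>0$, and set $t_\nu=\alpha\lambda_\nu^{3/2}/8$. Split
\[
f_\nu=\Pi_\nu\bigl(f_\nu\mathbf 1_{\{|f_\nu|\le t_\nu\}}\bigr)+\Pi_\nu\bigl(f_\nu\mathbf 1_{\{|f_\nu|>t_\nu\}}\bigr).
\]
The first parts sum to at most $2\sum_\nu t_\nu=\alpha/4$ in $L^\infty$. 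For the second parts, orthogonality and $\int_{\{|f_\nu|>t\}}|f_\nu|^2\le3\lambda_\nu^3/t$ bound the $L^2$ norm squared of their sum by $24/\alpha$. Chebyshev then gives $|\{|\sum_\nu f_\nu|>\alpha\}|\le C\alpha^{-3}$, which is \eqref{eq:projection:sum:weak}.
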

\end{samepage}

\begin{samepage}
By \eqref{eq:projection:identity}, the operators $E_{\Lambda_j}$ below
are conditional expectations for a decreasing filtration.
The following estimate follows from Burkholder's square-function inequality
\cite[Theorem~3.1]{Burkholder}, by duality and Lorentz interpolation
\cite[Chapter~5]{BL}, followed by the triangle inequality for an
equivalent norm on $L^{3/2,\infty}$; see also \cite[Lemma~3.1]{DFGGL}.
\begin{lemma}\label{lem:projection:martingale}
Let $\Z^2=\Lambda_0\supset\Lambda_1\supset\cdots\supset\Lambda_J$
be full-rank sublattices, with $J\ge1$. For every
$h\in L^{3,\infty}(\T^2)$,
\begin{equation}\label{eq:projection:martingale}
 \|h\|_{3,\infty}^2
 \le C\|E_{\Lambda_J}h\|_{3,\infty}^2
   +C\sum_{j=0}^{J-1}
       \|(E_{\Lambda_j}-E_{\Lambda_{j+1}})h\|_{3,\infty}^2,
\end{equation}
where $C$ is absolute and independent of $J$ and the sublattices.
\end{lemma}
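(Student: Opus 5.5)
We derive this from the Burkholder square-function inequality through a duality argument, and then pass from an $\ell^2$ sum inside the norm to an $\ell^2$ sum of norms using normability of $L^{3/2,\infty}$.

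\emph{Martingale structure.} For $0\le j\le J$ let $\mathcal F_j$ be the $\sigma$-algebra of Borel sets invariant under translation by $\Lambda_j^\perp$. Since $\Lambda_{j+1}\subset\Lambda_j$, we have $\Lambda_j^\perp\subset\Lambda_{j+1}^\perp$, hence $\mathcal F_{j+1}\subset\mathcal F_j$. By the averaging formula \eqref{eq:projection:identity}, $E_{\Lambda_j}$ is the conditional expectation onto $\mathcal F_j$. Also $E_{\Lambda_0}=\mathrm{Id}$ because $\Lambda_0^\perp=\{0\}$. Reversing the index, $g_k=E_{\Lambda_{J-k}}h$ for $k=0,\dots,J$ is a martingale with respect to an increasing filtration, with $g_J=h$. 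Its differences are
\[
d_j=(E_{\Lambda_j}-E_{\Lambda_{j+1}})h,\qquad d_{-1}:=E_{\Lambda_J}h.
\]
Define the square function $S(h)=\bigl(|E_{\Lambda_J}h|^2+\sum_{j=0}^{J-1}|d_j|^2\bigr)^{1/2}$. Burkholder \cite[Theorem~3.1]{Burkholder} gives $\|S(g)\|_p\le C_p\|g\|_p$ for every $1<p<\infty$, with $C_p$ independent of $J$ and of the filtration.

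\emph{Duality.} The quasi-norm $\|\cdot\|_{3,\infty}$ is equivalent, with absolute constants, to the norm $\sup\{|\int h\bar g|:\|g\|_{3/2,1}\le1\}$. Fix such a $g$ and write its differences $d_j(g)$ analogously. Martingale differences are orthogonal, and $E_{\Lambda_j}$ is self-adjoint. Hence
\[
\int h\bar g=\int E_{\Lambda_J}h\,\overline{E_{\Lambda_J}g}+\sum_{j}\int d_j(h)\overline{d_j(g)}\le\int S(h)S(g).
\]
By Cauchy--Schwarz pointwise and Hölder in Lorentz spaces, this is at most $C\|S(h)\|_{3,\infty}\|S(g)\|_{3/2,1}$. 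The map $g\mapsto S(g)$ is sublinear and is strong type $(p,p)$ at, say, $p=5/4$ and $p=2$ with absolute constants. Marcinkiewicz real interpolation \cite[Chapter~5]{BL} then yields $\|S(g)\|_{3/2,1}\le C\|g\|_{3/2,1}$. Therefore
\[
\|h\|_{3,\infty}\le C\|S(h)\|_{3,\infty}.
\]

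\emph{Moving the sum outside.} We have
\[
\|S(h)\|_{3,\infty}^2=\bigl\|\,|E_{\Lambda_J}h|^2+\textstyle\sum_j|d_j|^2\bigr\|_{3/2,\infty}.
\]
Because $3/2>1$, $L^{3/2,\infty}$ admits an equivalent norm, for instance $\sup_{|E|>0}|E|^{-1/3}\int_E|\cdot|$. Applying the triangle inequality in that norm and using $\||d|^2\|_{3/2,\infty}=\|d\|_{3,\infty}^2$ gives \eqref{eq:projection:martingale}.

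The main point needing care is the interpolation step. We need the square-function bound in the Lorentz space $L^{3/2,1}$, which is not a Lebesgue space, so real interpolation of a sublinear operator is required. We also need every constant to be uniform in $J$ and in the lattices. Both follow because Burkholder's constants depend only on $p$, and the interpolation constants depend only on the endpoint exponents.
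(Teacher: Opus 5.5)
Your proposal is correct and follows the same route the paper indicates. You apply Burkholder's square-function inequality to the reversed filtration, use duality between $L^{3,\infty}$ and $L^{3/2,1}$ together with Marcinkiewicz interpolation of the sublinear square function, and then apply the triangle inequality for the equivalent norm $\sup_{E}|E|^{-1/3}\int_E|\cdot|$ on $L^{3/2,\infty}$; you in fact supply the details (orthogonality of differences, pointwise Cauchy--Schwarz, $\||d|^2\|_{3/2,\infty}=\|d\|_{3,\infty}^2$) that the paper only sketches by citation.
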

\end{samepage}

\subsection{Reduction to split primes}
A split prime means a rational prime $p\equiv1\pmod4$; it factors as
$p=\varpi\bar\varpi$ with nonassociate Gaussian primes
$\varpi,\bar\varpi$.
Factor a positive integer as
\begin{equation}\label{eq:reduction:factor}
 n=2^a\prod_{p\equiv1\pmod4}p^{\nu_p}
          \prod_{q\equiv3\pmod4}q^{f_q}.
\end{equation}
Unique factorization in $\Z[i]$ shows that $P_n$ is empty if some
$f_q$ is odd. Otherwise
\begin{equation}\label{eq:reduction:circle}
 P_n=\gamma P_{n_0},\qquad
 \gamma=(1+i)^a\prod_{q\equiv3\pmod4}q^{f_q/2},\qquad
 n_0=\prod_{p\equiv1\pmod4}p^{\nu_p}.
\end{equation}
The map $z\mapsto\gamma z$ is a bijection in \eqref{eq:reduction:circle};
\eqref{eq:energy} and \eqref{eq:scaling} give
$M_n=M_{n_0}$ and $J_3(P_n)=J_3(P_{n_0})$. It therefore suffices to
treat products of split prime powers, including $n=1$.
Write $n=\prod_{j=1}^r p_j^{\nu_j}$ with distinct split primes $p_j$
and positive integers $\nu_j$, and choose Gaussian primes $\varpi_j$
with $\N(\varpi_j)=p_j$. Then
\begin{equation}\label{eq:product}
 P_n=\left\{\epsilon\prod_{j=1}^r
        \varpi_j^{k_j}\bar\varpi_j^{\nu_j-k_j}:
        \epsilon\in\{1,-1,i,-i\},\ 0\le k_j\le \nu_j\right\},
 \qquad M_n=4\prod_{j=1}^r(\nu_j+1).
\end{equation}
Every point has a unique representation in \eqref{eq:product}.
Divisibility by a rational prime $p$ means
divisibility of both integer coordinates.

\section{Proof of Theorem~\ref{thm:lp:weak}}
\label{sec:eigenfunction-lp}
We prove the weak endpoint bound by reducing lattice circles to $3$-adic
graphs and combining a bilinear estimate with a telescoping sum.
Integration of the distribution function gives the strong $L^p$ bounds
for $p<6$ and the logarithmic sixth-moment estimate.

\subsection{The weak endpoint estimate}
For finite $S\subset\Z^2$, a coefficient vector
$a=(a_\xi)_{\xi\in S}$ with $a_\xi\in\mathbb C$,
$\zeta\in\Z^2$, and a nonsingular integer $2\times2$ matrix $M$, let
\begin{align}
 U_{S,a}(x)&=\sum_{\xi\in S}a_\xi e^{2\pi i\xi\cdot x},
 \qquad \|a\|_2^2=\sum_{\xi\in S}|a_\xi|^2,\label{eq:lp:integer-sum}\\
 U_{\zeta+MS,a}(x)&=e^{2\pi i\zeta\cdot x}U_{S,a}(M^Tx).
 \label{eq:lp:affine-invariance}
\end{align}
In \eqref{eq:lp:affine-invariance}, the coefficient at $\zeta+M\xi$ is $a_\xi$.
A common affine change preserves the joint distribution of the absolute values,
including weak norms, since $M^T$ preserves Haar measure by \eqref{eq:scaling}.

Let $\mathbb Q_3$ be the $3$-adic field, $v_3$ its valuation,
$v_3(0)=+\infty$, $|t|_3=3^{-v_3(t)}$, and
$\mathbb Z_3=\{t\in\mathbb Q_3:|t|_3\le1\}$.
The following two lemmas concern maps $\varphi:\mathbb Z_3\to\mathbb Z_3$
satisfying, for $u,v,w,z\in\mathbb Z_3$ with $u+v=w+z$,
\begin{equation}\label{eq:lp:second-difference}
 |\varphi(u)+\varphi(v)-\varphi(w)-\varphi(z)|_3
 =|u-w|_3|u-z|_3.
\end{equation}
\begin{lemma}[Graph estimate]\label{lem:lp:graph-weak}
For every finite $S\subset\Z^2$ on the graph of a map satisfying
\eqref{eq:lp:second-difference}, and every coefficient vector $a$ in
\eqref{eq:lp:integer-sum},
\begin{equation}\label{eq:lp:graph-bound}
 \|U_{S,a}\|_{6,\infty}\le C\|a\|_2,
\end{equation}
with an absolute constant.
\end{lemma}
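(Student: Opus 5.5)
The plan is to run a $3$-adic multiscale decomposition of $h=|U_{S,a}|^2$. Since $\|U_{S,a}\|_{6,\infty}^2=\|h\|_{3,\infty}$, it suffices to prove $\|h\|_{3,\infty}\le C\|a\|_2^2$. Write points of $S$ as $\xi=(u,\varphi(u))$ and take the filtration
\[
 \Lambda_j=\{(x,y)\in\Z^2:3^j\mid x\},\qquad j=0,1,\dots,J,
\]
where $J$ is so large that distinct points of $S$ have distinct first coordinates modulo $3^J$.

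\emph{Decomposition.} The frequencies of $h$ are the differences $\xi-\eta$. The difference $\xi-\eta$ lies in $\Lambda_j$ exactly when $u\equiv w\pmod{3^j}$. Hence
\[
 E_{\Lambda_j}h=\sum_{t\bmod 3^j}|U_t|^2,
\]
where $U_t$ is the part of $U_{S,a}$ with first coordinate $\equiv t\pmod{3^j}$. For $j=J$ we get $E_{\Lambda_J}h=\sum_\xi|a_\xi|^2$, which is constant, so $\|E_{\Lambda_J}h\|_{3,\infty}\le\|a\|_2^2$. The increments are
\[
 (E_{\Lambda_j}-E_{\Lambda_{j+1}})h=\sum_{t\bmod 3^j}\ \sum_{s\ne s'}U_{t,s}\overline{U_{t,s'}},
\]
where $s,s'$ run over the three subclasses modulo $3^{j+1}$. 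Lemma~\ref{lem:projection:martingale} then reduces the problem to bounding $\sum_j\|(E_{\Lambda_j}-E_{\Lambda_{j+1}})h\|_{3,\infty}^2$.

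\emph{Bilinear estimate and separation of pieces.} For a fixed class $t$, I rescale by the affine change $u=t+3^ju'$. Subtracting the affine part of $\varphi$ on that class and dividing the second coordinate by $3^{2j}$ gives a new map $\psi$. By \eqref{eq:lp:second-difference}, $\psi$ again satisfies \eqref{eq:lp:second-difference}, and integrality is preserved because every second difference inside the class is divisible by $3^{2j}$. The affine invariance \eqref{eq:lp:affine-invariance} therefore reduces the $t$-piece to the case $j=0$ with three transversal classes modulo $3$. There I would invoke the bilinear estimate (Lemma~\ref{lem:lp:bilinear}) in the form
\[
 \|U_{s}\overline{U_{s'}}\|_{3,\infty}\le C\|a_s\|_2\|a_{s'}\|_2
\]
for $s\ne s'$. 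To combine different $t$, I need the pieces to have disjoint frequency supports. Within class $t$, \eqref{eq:lp:second-difference} forces $\varphi(u)-\varphi(w)\equiv\lambda_t(u-w)$ modulo a power of $3$ comparable to $3^{2j}$, with slopes satisfying $|\lambda_t-\lambda_{t'}|_3=|t-t'|_3$. So the pieces are supported on distinct congruence classes of $y-\lambda_tx/3^j$-type, and projections $\Pi_t$ onto these classes are averaging operators as in \eqref{eq:projection:identity}, so their $L^\infty$ norm is at most $1\le2$. The weak summation \eqref{eq:projection:sum:weak} of Lemma~\ref{lem:projection:summation} then gives
\[
 \|(E_{\Lambda_j}-E_{\Lambda_{j+1}})h\|_{3,\infty}\le C\Bigl(\sum_{t\bmod 3^j}D_{j,t}^{3/2}\Bigr)^{2/3},
 \qquad D_{j,t}=\sum_{s\ne s'}\|a_{t,s}\|_2\|a_{t,s'}\|_2 .
\]

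\emph{Telescoping over scales.} The last step is to show $\sum_j\bigl(\sum_tD_{j,t}^{3/2}\bigr)^{4/3}\le C\|a\|_2^4$. I expect this to be the main obstacle. The natural route is a telescoping sum: with masses $m_t=\|a_t\|_2^2$, one wants $D_{j,t}^{\alpha}\lesssim m_t^{\alpha}-\sum_sm_{t,s}^{\alpha}$ for a suitable exponent $\alpha$, and then sums over $t$ and $j$ against $\bigl(\sum_tm_t^{\alpha}\bigr)$ at consecutive levels. With $\alpha=3/2$ this naive inequality fails in the unbalanced case $m_{t,1}\ll m_{t,2}$: the right side is linear in the small mass, while $D_{j,t}^{3/2}$ scales like its $3/4$ power. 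So I would first try to sharpen the bilinear input. The idea is to bound $\|U_s\overline{U_{s'}}\|_{3,\infty}$ by an expression that is superlinear in the smaller mass, for instance by interpolating the bilinear bound with the trivial $L^2$ bound $\|U_s\overline{U_{s'}}\|_2\le\|a_s\|_2\|a_{s'}\|_2$ and the $L^\infty$ bound $\|U_s\|_\infty\le|S_s|^{1/2}\|a_s\|_2$. Failing that, I would reorganise the martingale sum so that unbalanced increments are absorbed into the previous scale. In either case, the remaining steps are the routine checks that the rescaled maps satisfy \eqref{eq:lp:second-difference} and that the frequency supports of different $t$ are disjoint.
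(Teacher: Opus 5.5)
Your skeleton matches the paper's proof: the filtration $3^j\Z\times\Z$ with Lemma~\ref{lem:projection:martingale}, affine renormalization of each class to a new graph satisfying \eqref{eq:lp:second-difference}, disjointness of the increment supports via the slopes, and the weak summation \eqref{eq:projection:sum:weak}. A minor correction: the supports are disjoint only after restricting to frequencies $(\xi,\eta)$ with $v_3(\xi)=j$. The relevant projections are therefore differences of two averages, with norm at most $2$, not plain averages. Beyond that, there are two genuine gaps.

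\emph{The bilinear step is circular.} Lemma~\ref{lem:lp:bilinear} does not give $\|U_s\overline{U_{s'}}\|_{3,\infty}\le C\|a_s\|_2\|a_{s'}\|_2$ unconditionally. It assumes $\|U_i\|_{6,\infty}\le K\|a^{(i)}\|_2$ and returns $CK$, and an absolute $K$ is exactly what you are proving. The missing idea is a bootstrap. For the fixed finite $S$, set $K_S=\sup_{a\ne0}\|U_{S,a}\|_{6,\infty}/\|a\|_2$, which is finite because $1\le K_S\le\sqrt{|S|}$. Each piece $U_{t,s}$ equals $U_{S,a'}$ for the zero extension $a'$ of a restriction of $a$, and the renormalization preserves weak norms. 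So the normalized pieces satisfy the hypothesis with $K=K_S$, and the lemma gives $CK_S\|a_{t,s}\|_2\|a_{t,s'}\|_2$. Your scheme then yields $\|U\|_{6,\infty}^4\le C(1+K_S^2)\|a\|_2^4$. Taking the supremum gives $K_S^4\le C(1+K_S^2)$, hence $K_S\le C$. This closes only because the bilinear bound is linear in $K$; a $K^2$ bound would give the useless $K_S^4\le C(1+K_S^4)$.

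\emph{The telescoping is left open, and your proposed remedies fail.} Interpolating with $\|U_s\|_\infty\le|S_s|^{1/2}\|a_s\|_2$ reintroduces dependence on $|S|$. No sharpening of the bilinear input is needed. Your example only shows that $D_{j,t}^{3/2}$ is the wrong quantity to compare with the decrement. What must telescope is $(\sum_tD_{j,t}^{3/2})^{4/3}$, which for a single class is $D_{j,t}^2$, and that is linear in the small mass. With submasses $x_0,x_1,x_2$ and $s=x_0+x_1+x_2$, Cauchy--Schwarz and $s-x_c\le2\sqrt s(\sqrt s-\sqrt{x_c})$ give
\[
 \Bigl(\sum_{c\ne d}\sqrt{x_cx_d}\Bigr)^2\le6\sum_cx_c(s-x_c)\le12\sqrt s\,\Bigl(s^{3/2}-\sum_cx_c^{3/2}\Bigr).
\]
Hence $D_{j,t}^{3/2}\le Cm_t^{3/8}B_t^{3/4}$, where $B_t=m_t^{3/2}-\sum_sm_{t,s}^{3/2}$. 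H\"older with exponents $4$ and $4/3$ over $t$, together with $\sum_tm_t^{3/2}\le\|a\|_2^3$, gives
\[
 \Bigl(\sum_tD_{j,t}^{3/2}\Bigr)^{4/3}\le C\Bigl(\sum_tm_t^{3/2}\Bigr)^{1/3}\sum_tB_t\le C\|a\|_2\Bigl(\sum_{t\bmod3^j}m_t^{3/2}-\sum_{t\bmod3^{j+1}}m_t^{3/2}\Bigr).
\]
Summed over $j$, this telescopes to at most $C\|a\|_2^4$, which is exactly the inequality you needed.
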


\begin{lemma}[Bilinear estimate]\label{lem:lp:bilinear}
Let finite sets $S_1,S_2\subset\Z^2$ lie on one graph satisfying
\eqref{eq:lp:second-difference}, in distinct
first-coordinate classes modulo $3$. If
\begin{equation}\label{eq:lp:bilinear-hypotheses}
 U_i=U_{S_i,a^{(i)}},\qquad \|a^{(i)}\|_2=1,\qquad
 \|U_i\|_{6,\infty}\le K\quad(i=1,2),\qquad K\ge1,
\end{equation}
then $\|U_1U_2\|_{3,\infty}\le CK$, with an absolute constant.
\end{lemma}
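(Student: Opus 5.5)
We would first extract the algebraic content of \eqref{eq:lp:second-difference}: the sum map $S_1\times S_2\to\Z^2$, $(\xi,\eta)\mapsto\xi+\eta$, is injective. If $\xi=(u,\varphi(u))$, $\eta=(v,\varphi(v))$, $\xi'=(w,\varphi(w))$, $\eta'=(z,\varphi(z))$ and $\xi+\eta=\xi'+\eta'$, then $u+v=w+z$ and $\varphi(u)+\varphi(v)-\varphi(w)-\varphi(z)=0$. By \eqref{eq:lp:second-difference} this forces $|u-w|_3|u-z|_3=0$. Since $u$ and $z$ lie in distinct classes modulo $3$, $u\ne z$, so $u=w$ and $v=z$. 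Parseval then gives $\|U_1U_2\|_2=\|a^{(1)}\|_2\|a^{(2)}\|_2=1$. More generally, for a pair $(\xi,\eta)$ and a perturbation $(\xi',\eta')$ with $w-u\equiv 0\pmod{3^j}$ exactly, the same identity shows that the second coordinate of $\xi+\eta-\xi'-\eta'$ has $3$-adic size exactly $|u-w|_3|u-z|_3=3^{-j}$, because $|u-z|_3=1$. This is the transversality we will exploit at every $3$-adic scale.

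Next we would organize $U_1U_2$ in $3$-adic scales. Let $\Lambda_j=\{(x,y)\in\Z^2:x\equiv y\equiv0\pmod{3^j}\}$, or a suitably skewed family adapted to the graph, and decompose $h=U_1U_2$ by Lemma~\ref{lem:projection:martingale} into martingale differences $(E_{\Lambda_j}-E_{\Lambda_{j+1}})h$. Alternatively, we would split $S_1$ and $S_2$ into residue classes of the first coordinate modulo $3^j$ and rescale each class. The map $u\mapsto c+3^ju$ sends a map satisfying \eqref{eq:lp:second-difference} to one that still satisfies it after subtracting an affine function and dividing by $9^j$, so by \eqref{eq:lp:affine-invariance} each piece is again a graph configuration of the same type. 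The aim at scale $j$ is a bound of the form $\|\Delta_jh\|_{3,\infty}^{3/2}\lesssim K^{3/2}\sum_\nu\|a^{(1)}_\nu\|_2^{3/2}\|a^{(2)}_\nu\|_2^{3/2}$, where $\nu$ runs over pairs of classes. These pieces have pairwise disjoint frequency supports by the injectivity above, so Lemma~\ref{lem:projection:summation} in its weak form \eqref{eq:projection:sum:weak} reassembles them. Summing the squares over $j$ with the martingale lemma then gives $\|U_1U_2\|_{3,\infty}\le CK$, provided the $j$-sums telescope: each pair of frequencies is charged only at the scale of its $3$-adic separation.

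For the per-piece estimate, the plan is to interpolate. On one side we have the exact $L^2$ orthogonality $\|U_1U_2\|_2\le1$ from the first paragraph, applied to the rescaled piece. On the other side we have the weak Hölder bound $\|U_1U_2\|_{3,\infty}\le C\|U_1\|_{6,\infty}\|U_2\|_{6,\infty}$. That Hölder bound alone gives only $K^2$. We would recover a single power of $K$ from the level-set decomposition
\[
|\{|U_1U_2|>t\}|\le|\{|U_1|>s\}|+|\{|U_2|>t/s\}|+|\{|U_1U_2|>t,\ |U_1|\le s,\ |U_2|\le t/s\}|.
\]
The first two terms use the $L^{6,\infty}$ hypothesis \eqref{eq:lp:bilinear-hypotheses} with an optimized choice of $s$. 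The third term uses the $L^2$ bound together with the truncation.

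The main obstacle is this third, truncated term. A naive optimization of the split yields $K^{3/2}$ or $K^2$, not $K$, so the gain must come from the multiscale structure rather than from a single interpolation. We would first try to run the level-set argument separately on each martingale difference $\Delta_jh$, using at scale $j$ the exact $3^{-j}$ transversality to show that $\Delta_j(U_1U_2)$ is essentially a sum of products of pieces with orthogonal, well-separated sums. The $L^2$ side then carries full orthogonality, while the $L^{6,\infty}$ side is applied to only one factor at a time, with the other factor controlled in $L^2$ through $E_{\Lambda_j}$-averaging via \eqref{eq:projection:module}. If this fails, the fallback is a bootstrap: define $K_*$ as the best constant in Lemma~\ref{lem:lp:graph-weak} over all graph configurations, prove $\|U_1U_2\|_{3,\infty}\le C(K+K_*^{1/2}K^{1/2})$ by the scale decomposition, and close the argument inside the proof of Lemma~\ref{lem:lp:graph-weak}.
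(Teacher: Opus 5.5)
Your first paragraph is correct: injectivity of $(\xi,\eta)\mapsto\xi+\eta$ on $S_1\times S_2$ gives $\|U_1U_2\|_2=1$, and the exact $3^{-j}$ transversality is the right input at scale $3^j$. But the proposal stops where the real work begins, so there is a genuine gap.

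The level-set split you display is degenerate. On $\{|U_1U_2|>t,\ |U_1|\le s,\ |U_2|\le t/s\}$ one has $|U_1U_2|\le t$, so this set is empty, and the split only reproduces the weak H\"older bound $K^2$. The useful split is symmetric. At level $t=\alpha^2$, discard $\{\max_i|U_i|>K\alpha\}$, which costs $\alpha^6|\cdot|\le2$ by \eqref{eq:lp:bilinear-hypotheses}. Keep $E_\alpha=\{|U_1U_2|>\alpha^2,\ \max_i|U_i|\le K\alpha\}$, on which $|U_i|>\alpha/K$ for both $i$. On $E_\alpha$ the identity $\|U_1U_2\|_2=1$ only gives $\alpha^6|E_\alpha|\le\alpha^2$. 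Pointwise truncation of $U_i$ destroys the Fourier structure, so orthogonality cannot be applied to a truncated product.

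Neither of your plans supplies the missing gain. Splitting $U_1U_2=\sum_{r,r'}U_{1,r}U_{2,r'}$ into cap products at scale $3^j$ does put the pieces in disjoint cosets of $3^j\Z^2$, by \eqref{eq:lp:pair-congruence} with $k=j$. But the bound you want for each piece is again a bilinear estimate, so the reduction is circular. Moreover, reassembling through \eqref{eq:projection:sum:weak} over all pairs costs $\bigl[(\sum_rA_r^{3/4})(\sum_{r'}B_{r'}^{3/4})\bigr]^{2/3}$, with $A_r,B_{r'}$ the cap masses. This is unbounded as the masses spread out, so nothing telescopes. The bootstrap fallback does not prove the lemma as stated (arbitrary $K$, absolute $C$), and the inequality it relies on is itself unproved.

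The missing idea is the paper's truncated square function, Lemma~\ref{lem:lp:truncation}. Build the $3$-adic tree of cap sums $h_{j,r}$ for $U_1+U_2$, discarding each node where it exceeds $L=16K/\alpha$ before adding it to its parent. Because caps are lattice cosets, cutting by $|h_{j,r}|$ preserves cap support, by \eqref{eq:lp:cap-translations}. The resulting $G_j$ form a martingale with $G_J=2$. The exact four-cap identity \eqref{eq:lp:four-cap}, which rests on the congruence modulo $3^{2j}$ (curvature, not just transversality), gives $\|G_1-2\|_2^2\le9L^2$.

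A stopping time $\Omega_j=\{G_j>4\}\cap\bigcap_{k>j}\{G_k\le4\}$ then covers $E_\alpha$. Let $V_{i,j}$ be the sums of the level-$j$ nodes lying over $S_i$. On $E_\alpha\cap\Omega_j$ these satisfy $|U_i-V_{i,j}|\le4/L=\alpha/(4K)<|U_i|/4$; this is exactly where the lower bound $|U_i|>\alpha/K$ is used. Scale-$3^j$ transversality plus Jensen give $E_{3^j\Z^2}|V_{1,j}V_{2,j}|^2\le\tfrac14G_j^2\le E_{3^j\Z^2}|G_1-2|^2$ on $\Omega_j$.

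Summing over the disjoint $\Omega_j$ gives $\alpha^4|E_\alpha|\lesssim L^2$, that is $\alpha^6|E_\alpha|\lesssim K^2$. So in fact $\|U_1U_2\|_{3,\infty}^3\lesssim K^2$. Neither Lemma~\ref{lem:projection:martingale} nor Lemma~\ref{lem:projection:summation} is needed here: the whole gain comes from the $L^2$ control of the truncated square function combined with the stopping time.
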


\begin{proof}[Proof of Theorem~\ref{thm:lp:weak}]
The zero function and constants are immediate. For $P_n\ne\varnothing$ in
\eqref{eq:circle:def}, the implication $3\mid x^2+y^2\Rightarrow3\mid x,y$ gives
\begin{equation}\label{eq:lp:integer-circle}
 \begin{gathered}
 n=3^{2k}m,\qquad 3\nmid m,\qquad P_n=3^kP_m,\\
 z_0\in P_m,\qquad
 S=\overline{z_0}P_m\subset\{(x,y)\in\Z^2:x^2+y^2=m^2\}.
 \end{gathered}
\end{equation}
By \eqref{eq:scaling} and \eqref{eq:lp:affine-invariance}, it suffices to
estimate $U_{S,a}$ with the coefficients reindexed. Since $m^2\equiv1\pmod3$,
the four possible classes are $(\pm m,0),(0,\pm m)$. Rotate each to $(0,m)$.
Then $3\mid x$, $3\nmid y+m$, and $(y-m)(y+m)=-x^2$ imply $9\mid y-m$.
Substitution in \eqref{eq:lp:integer-circle} gives
\begin{equation}\label{eq:lp:circle-conic}
 x=3t,\qquad y=m+9s,\qquad t,s\in\Z,\qquad
 t^2+2ms+9s^2=0.
\end{equation}
For fixed $t\in\mathbb Z_3$, the map $s\mapsto-(t^2+9s^2)/(2m)$ preserves
$\mathbb Z_3$ and is a contraction, since
$|9(s^2-s'^2)/(2m)|_3\le3^{-2}|s-s'|_3$ for $s,s'\in\mathbb Z_3$.
Iteration from $s=0$ gives a unique $\varphi(t)\in\mathbb Z_3$ satisfying
\begin{equation}\label{eq:lp:circle-model}
 t^2+2m\varphi(t)+9\varphi(t)^2=0,\qquad \varphi(t)\in\mathbb Z_3.
\end{equation}
Subtracting \eqref{eq:lp:circle-model} at $t+h$ and $t$ gives
\begin{equation}\label{eq:lp:circle-increment}
 [2m+9(\varphi(t+h)+\varphi(t))]
 (\varphi(t+h)-\varphi(t))=-h(2t+h).
\end{equation}
The bracket is a unit because $3\nmid m$, so
$\varphi(t+h)-\varphi(t)\in h\mathbb Z_3$. For $u+v=w+z$, let
$D=\varphi(u)+\varphi(v)-\varphi(w)-\varphi(z)$. Since
$u^2+v^2-w^2-z^2=2(u-w)(u-z)$ and
\[
 \begin{aligned}
 \varphi(u)^2+\varphi(v)^2-\varphi(w)^2-\varphi(z)^2
 &=D(D+2\varphi(z))\\
 &\quad-2(\varphi(u)-\varphi(w))(\varphi(v)-\varphi(w)),
 \end{aligned}
\]
\eqref{eq:lp:circle-model} gives
\begin{equation}\label{eq:lp:circle-difference}
 [2m+9(D+2\varphi(z))]D
 =-2(u-w)(u-z)+18(\varphi(u)-\varphi(w))(\varphi(v)-\varphi(w)).
\end{equation}
By \eqref{eq:lp:circle-increment} and $v-w=z-u$, the last term belongs to
$9(u-w)(u-z)\mathbb Z_3$. The bracket is a unit, so
$|D|_3=|u-w|_3|u-z|_3$, proving \eqref{eq:lp:second-difference}.

Let $U_\nu,a^{(\nu)}$ correspond to the four classes, so
$U_{S,a}=\sum_{\nu=1}^4U_\nu$ and $\sum_{\nu=1}^4\|a^{(\nu)}\|_2^2=\|a\|_2^2$
by \eqref{eq:lp:integer-sum}. Equations~\eqref{eq:lp:circle-conic}--\eqref{eq:lp:circle-model}
identify each rotated class with integer graph points under
$(t,s)\mapsto(3t,m+9s)$. By Lemma~\ref{lem:lp:graph-weak},
\eqref{eq:lp:affine-invariance}, and the inclusion
$\{|U_{S,a}|>\alpha\}\subset\bigcup_{\nu=1}^4\{|U_\nu|>\alpha/4\}$,
\begin{equation}\label{eq:lp:circle-weak}
 \alpha^6|\{|U_{S,a}|>\alpha\}|
 \le4^6\sum_{\nu=1}^4\|U_\nu\|_{6,\infty}^6
 \le C\sum_{\nu=1}^4\|a^{(\nu)}\|_2^6
 \le C\|a\|_2^6\qquad(\alpha>0).
\end{equation}
By \eqref{eq:app:parseval}, \eqref{eq:lp:weak-norm}, and
\eqref{eq:lp:integer-circle}, this proves \eqref{eq:lp:weak}, or equivalently
\begin{equation}\label{eq:lp:distribution}
 |\{x\in\T^2:|F(x)|>t\}|
 \le\min\{1,C_{\mathrm w}^6\|F\|_2^6t^{-6}\}\qquad(t>0).
\end{equation}
For $1\le p\le2$, H\"older gives \eqref{eq:lp:subcritical} with $C_p=1$.
For $2<p<6$ and $F\ne0$, integrating \eqref{eq:lp:distribution} gives
\begin{equation}\label{eq:lp:subcritical-integration}
 \frac{\|F\|_p^p}{(C_{\mathrm w}\|F\|_2)^p}
 \le p\int_0^1t^{p-1}\,dt+p\int_1^\infty t^{p-7}\,dt
 =\frac6{6-p}.
\end{equation}
This proves \eqref{eq:lp:subcritical} with
$C_p=C_{\mathrm w}(6/(6-p))^{1/p}$.
\end{proof}

\subsection{A logarithmic sixth-moment bound}\label{subsec:log-moment}
Let $C_0=1+3C_{\mathrm w}^6$.
For $F$ in \eqref{eq:app:weighted-sum} and
$S=\{z\in P_n:a_z\ne0\}$, we have
\begin{equation}\label{eq:lp:strong-support}
 \|F\|_6\le C_0^{1/6}[\log(2+|S|)]^{1/6}\|F\|_2.
\end{equation}
Taking $F=f_S$ in \eqref{eq:lp:strong-support} and using
\eqref{eq:app:parseval} and \eqref{eq:energy}, with
$\|f_S\|_2^6=|S|^3$, gives
\begin{equation}\label{eq:log-energy}
 J_3(S)=\int_{\T^2}|f_S|^6
 \le C_0|S|^3\log(2+|S|),\qquad S\subset P_m,\quad m\ge1.
\end{equation}
To prove \eqref{eq:lp:strong-support}, suppose $F\ne0$ and normalize
$\|F\|_2=1$. By \eqref{eq:app:parseval} and Cauchy--Schwarz,
$\|F\|_\infty\le\sum_{z\in S}|a_z|\le\sqrt{|S|}$.
Integrating \eqref{eq:lp:distribution} yields
\begin{equation}\label{eq:lp:strong-from-weak}
 \begin{aligned}
 \|F\|_6^6
 &=6\int_0^{\sqrt{|S|}}t^5|\{|F|>t\}|\,dt\\
 &\le1+6C_{\mathrm w}^6\int_1^{\sqrt{|S|}}\frac{dt}{t}
 =1+3C_{\mathrm w}^6\log|S|\le C_0\log(2+|S|).
 \end{aligned}
\end{equation}
Rescaling proves \eqref{eq:lp:strong-support}; $F=0$ is immediate.

\subsection{Proof of the graph estimate}\label{subsec:lp:graph-proof}
Let $\varphi$ satisfy \eqref{eq:lp:second-difference}. For $t\in\mathbb Z_3$
and $|h|_3=1$, let $$\delta_k=\varphi(t+(k+1)h)-\varphi(t+kh), \quad k\ge0.$$
Substitution of $(t+(k+1)h,t+\ell h,t+kh,t+(\ell+1)h)$ in
\eqref{eq:lp:second-difference} gives $|\delta_k-\delta_\ell|_3=|k-\ell|_3$.
For each integer $j\ge0$, the first $3^j$ terms therefore represent all classes modulo $3^j$, so
\[
 \varphi(t+3^jh)-\varphi(t)
 =\sum_{k=0}^{3^j-1}\delta_k
 \equiv\sum_{k=0}^{3^j-1}k
 =\frac{3^j(3^j-1)}2\equiv0\pmod{3^j}.
\]
Every nonzero increment in $\mathbb Z_3$ has this form, so
\begin{equation}\label{eq:lp:basic-geometry}
 |\varphi(t+h)-\varphi(t)|_3\le|h|_3
 \qquad(t,h\in\mathbb Z_3).
\end{equation}
For integers $j\ge0$ and $r\bmod3^j$, define
\begin{equation}\label{eq:lp:caps}
 \Theta_{j,r}=\{(\xi,\eta)\in\Z^2:\xi\equiv r\pmod{3^j},\quad
              \eta-\varphi(\xi)\in3^{2j}\mathbb Z_3\}.
\end{equation}
Fix the representative $0\le r<3^j$. By \eqref{eq:lp:basic-geometry}, choose
$b_{j,r},c_{j,r}\in\Z$ with $c_{j,r}\equiv\varphi(r)\pmod{3^{2j}}$ and
$3^jb_{j,r}\equiv\varphi(r+3^j)-\varphi(r)\pmod{3^{2j}}$.
For $u\in\mathbb Z_3$, \eqref{eq:lp:second-difference} gives
\[
 \big|\varphi(r+3^j(u+1))-\varphi(r+3^ju)
       -\varphi(r+3^j)+\varphi(r)\big|_3=3^{-2j}|u|_3\le3^{-2j}.
\]
Summing for nonnegative integer $u$ and using continuity from
\eqref{eq:lp:basic-geometry}, together with \eqref{eq:lp:caps}, yields
\begin{equation}\label{eq:lp:cap-linear}
 \begin{gathered}
 \varphi(r+3^ju)\equiv c_{j,r}+b_{j,r}3^ju\pmod{3^{2j}}\qquad(u\in\mathbb Z_3),\\
 M_{j,r}=\begin{pmatrix}3^j&0\\b_{j,r}3^j&3^{2j}\end{pmatrix},\qquad
 \Theta_{j,r}=(r,c_{j,r})+M_{j,r}\Z^2.
 \end{gathered}
\end{equation}
The inverse affine map in \eqref{eq:lp:cap-linear} sends
$(t,\varphi(t))\in\Theta_{j,r}$ to $(u,\psi(u))\in\Z^2$, where
\begin{equation}\label{eq:lp:normalization}
 u=\frac{t-r}{3^j},\qquad
 \psi(u)=3^{-2j}\bigl(\varphi(r+3^ju)-c_{j,r}-b_{j,r}3^ju\bigr)\in\mathbb Z_3.
\end{equation}
For $u+v=w+z$, the constant and linear terms in \eqref{eq:lp:normalization}
cancel. Thus \eqref{eq:lp:second-difference} gives
\begin{equation}\label{eq:lp:normalized-identities}
 |\psi(u)+\psi(v)-\psi(w)-\psi(z)|_3
 =3^{2j}|3^j(u-w)|_3|3^j(u-z)|_3=|u-w|_3|u-z|_3.
\end{equation}
\begin{proof}[Proof of Lemma~\ref{lem:lp:graph-weak}]
For nonempty $S$, let
\begin{equation}\label{eq:lp:fixed-constant}
 K_S=\sup_{a\ne0}\frac{\|U_{S,a}\|_{6,\infty}}{\|a\|_2},
 \qquad 1\le K_S\le\sqrt{|S|}.
\end{equation}
The bounds follow from a single nonzero coefficient and Cauchy--Schwarz
in \eqref{eq:lp:integer-sum}. Fix $\|a\|_2=1$ and write $U=U_{S,a}$.
Choose an integer $J\ge2$ so that distinct points of $S$ have distinct first coordinates
modulo $3^J$. For $0\le j\le J$ and $r\bmod3^j$, let
\begin{equation}\label{eq:lp:tree-masses}
 \begin{gathered}
 U_{j,r}=\sum_{\substack{\xi=(t,s)\in S\\t\equiv r\pmod{3^j}}}
       a_\xi e^{2\pi i\xi\cdot x},\qquad
 A_{j,r}=\sum_{\substack{\xi=(t,s)\in S\\t\equiv r\pmod{3^j}}}|a_\xi|^2,\\
 A_{j,r}=\sum_{c=0}^2A_{j+1,r+c3^j}\quad(j<J),\qquad
 \sum_{r\bmod3^j}A_{j,r}=1.
 \end{gathered}
\end{equation}
For $0\le j<J$, let $D_j=E_{3^j\Z\times\Z}-E_{3^{j+1}\Z\times\Z}$.
The projection in \eqref{eq:projection:identity} keeps exactly the pairs
whose first coordinates agree modulo $3^j$. Hence \eqref{eq:lp:tree-masses}
and the choice of $J$ give
\begin{equation}\label{eq:lp:linear-decomposition}
 \begin{gathered}
 E_{3^j\Z\times\Z}|U|^2=\sum_{r\bmod3^j}|U_{j,r}|^2,\qquad
 E_{3^J\Z\times\Z}|U|^2=1,\\
 D_j|U|^2=\sum_{r\bmod3^j}D_j|U_{j,r}|^2
 =\sum_{r\bmod3^j}\sum_{\substack{0\le c,d\le2\\c\ne d}}
 U_{j+1,r+c3^j}\overline{U_{j+1,r+d3^j}}.
 \end{gathered}
\end{equation}
By \eqref{eq:lp:cap-linear}, the frequency differences of $U_{j,r}$ lie in
$M_{j,r}\Z^2$. Thus $D_j|U_{j,r}|^2$ is fixed by the Fourier projection
$D_jE_{M_{j,r}\Z^2}$, whose $L^\infty$ operator norm is at most $2$ by
\eqref{eq:projection:properties} and whose frequency support is
\begin{equation}\label{eq:lp:cross-support}
 \{(\xi,\eta)\in M_{j,r}\Z^2:v_3(\xi)=j\}.
\end{equation}
If the supports in \eqref{eq:lp:cross-support} for distinct $r,r'\bmod3^j$
met at $(\xi,\eta)$, then \eqref{eq:lp:cap-linear} would imply
\[
 \varphi(r+\xi)-\varphi(r)\equiv\eta
 \equiv\varphi(r'+\xi)-\varphi(r')\pmod{3^{2j}}.
\]
Since $v_3(\xi)=j$, \eqref{eq:lp:second-difference} would give
\[
 \begin{aligned}
  3^{-2j}&\ge
  |\varphi(r+\xi)+\varphi(r')-\varphi(r)-\varphi(r'+\xi)|_3\\
  &=3^{-j}|r-r'|_3>3^{-2j},
 \end{aligned}
\]
a contradiction. Thus these supports are disjoint.

For $c\ne d$ in $\{0,1,2\}$, apply \eqref{eq:lp:normalization} to both sums
and divide each nonzero sum by the square root of its mass in
\eqref{eq:lp:tree-masses}. By \eqref{eq:lp:normalized-identities}, the new graph
satisfies \eqref{eq:lp:second-difference}, and $u=(t-r)/3^j$ lies in classes
$c,d\pmod3$. Zero extension in \eqref{eq:lp:fixed-constant} and
\eqref{eq:lp:affine-invariance} bound the normalized weak $L^6$ norms by $K_S$.
Conjugation preserves the product's absolute value, so Lemma~\ref{lem:lp:bilinear} gives
\begin{equation}\label{eq:lp:pair-bound}
 \|U_{j+1,r+c3^j}\overline{U_{j+1,r+d3^j}}\|_{3,\infty}
 \le CK_S\sqrt{A_{j+1,r+c3^j}A_{j+1,r+d3^j}}.
\end{equation}
The zero-mass cases follow directly from \eqref{eq:lp:tree-masses}.

For $x_0,x_1,x_2\ge0$ and $s=x_0+x_1+x_2$, Cauchy--Schwarz and
$s-x_c\le2\sqrt{s}(\sqrt{s}-\sqrt{x_c})$ give
\begin{equation}\label{eq:lp:mass-comparison}
 \Bigl(\sum_{\substack{0\le c,d\le2\\c\ne d}}\sqrt{x_cx_d}\Bigr)^2
 \le6\sum_{c=0}^2x_c(s-x_c)
 \le12\sqrt{s}\Bigl(s^{3/2}-\sum_{c=0}^2x_c^{3/2}\Bigr).
\end{equation}
Apply \eqref{eq:projection:sum:weak} to $D_jE_{M_{j,r}\Z^2}$, whose supports
\eqref{eq:lp:cross-support} are disjoint. The quasi-triangle inequality over
the six ordered pairs in \eqref{eq:lp:linear-decomposition},
\eqref{eq:lp:pair-bound}, and \eqref{eq:lp:mass-comparison} give the first
line below. H\"older with exponents $4$ and $4/3$ gives the second:
\begin{equation}\label{eq:lp:layer-summation}
 \begin{aligned}
 \|D_j|U|^2\|_{3,\infty}^2
 &\le CK_S^2\Bigl[\sum_{r\bmod3^j}A_{j,r}^{3/8}
 \Bigl(A_{j,r}^{3/2}-\sum_{c=0}^2A_{j+1,r+c3^j}^{3/2}\Bigr)^{3/4}\Bigr]^{4/3}\\
 &\le CK_S^2\Bigl(\sum_{r\bmod3^j}A_{j,r}^{3/2}\Bigr)^{1/3}
 \Bigl(\sum_{r\bmod3^j}A_{j,r}^{3/2}
                 -\sum_{r\bmod3^{j+1}}A_{j+1,r}^{3/2}\Bigr)\\
 &\le CK_S^2\Bigl(\sum_{r\bmod3^j}A_{j,r}^{3/2}
                 -\sum_{r\bmod3^{j+1}}A_{j+1,r}^{3/2}\Bigr).
 \end{aligned}
\end{equation}
The last step uses \eqref{eq:lp:tree-masses}:
$\sum_{r\bmod3^j}A_{j,r}^{3/2}\le(\sum_{r\bmod3^j}A_{j,r})^{3/2}=1$.
By \eqref{eq:lp:tree-masses}, $A_{0,0}=1$ and the nonzero $A_{J,r}$ are
$|a_\xi|^2$. Apply Lemma~\ref{lem:projection:martingale} with $h=|U|^2$ and
$\Lambda_j=3^j\Z\times\Z$. Since
$\||U|^2\|_{3,\infty}^2=\|U\|_{6,\infty}^4$ by \eqref{eq:lp:weak-norm},
\eqref{eq:lp:linear-decomposition} and summation of \eqref{eq:lp:layer-summation} give
\begin{equation}\label{eq:lp:mass-telescoping}
 \begin{aligned}
 \|U\|_{6,\infty}^4
 &\le C+C\sum_{j=0}^{J-1}\|D_j|U|^2\|_{3,\infty}^2\\
 &\le C+CK_S^2\Bigl(1-\sum_{\xi\in S}|a_\xi|^3\Bigr)
 \le C(1+K_S^2).
 \end{aligned}
\end{equation}
Taking the supremum in \eqref{eq:lp:fixed-constant}, and using $K_S\ge1$,
gives $K_S^4\le C(1+K_S^2)\le2CK_S^2$, hence $K_S\le\sqrt{2C}$.
This proves \eqref{eq:lp:graph-bound}. The empty set is immediate.
\end{proof}

\subsection{Proof of the bilinear estimate}\label{subsec:lp:bilinear-proof}
For $\zeta\in\Z^2$, a full-rank sublattice $\Lambda\subset\Z^2$, and bounded $V$
with $\operatorname{supp}\widehat V\subset\zeta+\Lambda$, Fourier uniqueness gives
\begin{equation}\label{eq:lp:cap-translations}
 V(x+y)=e^{2\pi i\zeta\cdot y}V(x)\qquad(y\in\Lambda^\perp)
\end{equation}
for almost every $x$. The same identity holds after truncating according to
$|V|$, so the Fourier support is preserved. For support in $\Theta_{j,r}$,
\eqref{eq:lp:cap-linear} gives $M_{j,r}\Z^2\subset3^j\Z^2$, hence
$E_{3^j\Z^2}|V|^2=|V|^2$ by \eqref{eq:projection:identity}.

For integers $k\ge0$ and $\xi_1,\ldots,\xi_4$ satisfying
\[
 \xi_1+\xi_2\equiv\xi_3+\xi_4\pmod{3^k},\qquad
 \varphi(\xi_1)+\varphi(\xi_2)\equiv\varphi(\xi_3)+\varphi(\xi_4)\pmod{3^k},
\]
replace $\xi_2$ by $\xi_3+\xi_4-\xi_1$. The change is in $3^k\Z$, so
\eqref{eq:lp:basic-geometry} preserves the second congruence. Applying
\eqref{eq:lp:second-difference} to the resulting exact sum relation gives
\begin{equation}\label{eq:lp:pair-congruence}
 (\xi_1-\xi_3)(\xi_1-\xi_4)\equiv0\pmod{3^k}.
\end{equation}
For $k=2j$, one factor in \eqref{eq:lp:pair-congruence} is divisible by $3^j$,
and the sum relation supplies the other pair. Thus, for trigonometric polynomials
with $\operatorname{supp}\widehat V_r\subset\Theta_{j,r}$ in \eqref{eq:lp:caps},
$\int V_{r_1}V_{r_2}\overline{V_{r_3}V_{r_4}}$ vanishes unless
\[
 (r_1,r_2)=(r_3,r_4)\quad\text{or}\quad(r_1,r_2)=(r_4,r_3)
 \qquad\text{in }(\Z/3^j\Z)^2.
\]
Both cases contribute $\int|V_{r_1}|^2|V_{r_2}|^2$, with their common case
$r_1=r_2=r_3=r_4$ counted once. Consequently
\begin{equation}\label{eq:lp:four-cap}
 \Bigl\|\sum_{r\bmod3^j}V_r\Bigr\|_4^4
 =2\Bigl\|\sum_{r\bmod3^j}|V_r|^2\Bigr\|_2^2
       -\sum_{r\bmod3^j}\|V_r\|_4^4.
\end{equation}
For bounded $V_r$, product Fej\'er means preserve the Fourier support and
converge in $L^4$. For $f,g\in L^4$, H\"older gives
$\||f|^2-|g|^2\|_2\le(\|f\|_4+\|g\|_4)\|f-g\|_4$, so every term in
\eqref{eq:lp:four-cap} converges. This proves the bounded case.

The following truncation is motivated by \cite[Proposition~4.1]{DFGGL}.
\begin{lemma}\label{lem:lp:truncation}
Let $S\subset\Z^2$ be a finite set on a graph satisfying
\eqref{eq:lp:second-difference}, let
$a=(a_\xi)_{\xi\in S}$, and let $A=\|a\|_2^2$.
Choose $J\ge2$ so that the first coordinates of $S$ are distinct modulo $3^J$.
For $L>0$, define
\begin{equation}\label{eq:lp:truncation-leaves}
 h_{J,r}(x)=\sum_{\substack{\xi=(t,s)\in S\\t\equiv r\pmod{3^J}}}
 a_\xi e^{2\pi i\xi\cdot x}\qquad(r\bmod3^J),
\end{equation}
and, successively for $j=J,J-1,\ldots,2$ and $r\bmod3^{j-1}$, let
\begin{equation}\label{eq:lp:truncation-recursion}
 h_{j-1,r}=\sum_{c=0}^2h_{j,r+c3^{j-1}}
                  \mathbf1_{\{|h_{j,r+c3^{j-1}}|\le L\}}.
\end{equation}
For $1\le j\le J$, let
\begin{equation}\label{eq:lp:truncation-square}
 G_j=\sum_{r\bmod3^j}|h_{j,r}|^2
     +\sum_{k=j+1}^J\sum_{r\bmod3^k}
           |h_{k,r}|^2\mathbf1_{\{|h_{k,r}|>L\}}.
\end{equation}
Then
\begin{equation}\label{eq:lp:truncation-estimate}
 G_J=A,\qquad E_{3^j\Z^2}G_1=G_j\quad(1\le j\le J),\qquad
 \|G_1-A\|_2^2\le\tfrac92L^2A.
\end{equation}
\end{lemma}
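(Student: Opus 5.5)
The identity $G_J=A$ and the conditional-expectation identities are bookkeeping about Fourier supports. The $L^2$ bound will come from orthogonality of the increments $G_{j-1}-G_j$ plus a bound on each increment using $|\cdot|\le L$; the last part is where I expect the difficulty.

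\emph{Step 1: $G_J=A$.} By the choice of $J$, each $h_{J,r}$ in \eqref{eq:lp:truncation-leaves} is either zero or a single exponential $a_\xi e^{2\pi i\xi\cdot x}$. So $|h_{J,r}|^2$ is the constant $|a_\xi|^2$. Since the second sum in \eqref{eq:lp:truncation-square} is empty for $j=J$, we get $G_J=A$.

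\emph{Step 2: supports and invariances.} By downward induction on $j$, I will show $\operatorname{supp}\widehat{h_{j,r}}\subset\Theta_{j,r}=(r,c_{j,r})+M_{j,r}\Z^2$.
\begin{itemize}
\item At $j=J$ this holds because the frequencies lie on the graph.
\item By \eqref{eq:lp:cap-translations}, truncation by $\mathbf1_{\{|h|\le L\}}$ preserves the twisted translation law, hence the Fourier support.
\item The caps are nested, $\Theta_{j,r+c3^{j-1}}\subset\Theta_{j-1,r}$.
\end{itemize}
Because $M_{k,r}\Z^2\subset3^k\Z^2$, every function $|h_{k,r}|^2$ and $|h_{k,r}|^2\mathbf1_{\{|h_{k,r}|>L\}}$ is invariant under $(3^k\Z^2)^\perp$. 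It is therefore fixed by $E_{3^j\Z^2}$ for $k\ge j$.

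Expanding $|h_{j-1,r}|^2$ via \eqref{eq:lp:truncation-recursion}, the diagonal truncated terms together with the new tail terms at level $k=j$ reassemble $\sum_{r'\bmod 3^j}|h_{j,r'}|^2$. This gives $G_{j-1}=G_j+X_j$, where
\[
X_j=\sum_{r\bmod3^{j-1}}Y_{j,r},\qquad Y_{j,r}=\sum_{c\ne d}g_c\overline{g_d},\qquad g_c=h_{j,r+c3^{j-1}}\mathbf1_{\{|h_{j,r+c3^{j-1}}|\le L\}}.
\]
Each product $g_c\overline{g_d}$ transforms under $y\in(3^j\Z^2)^\perp$ by the character $e^{2\pi i(\zeta_c-\zeta_d)\cdot y}$. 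Here $\zeta_c-\zeta_d$ has first coordinate $\equiv(c-d)3^{j-1}\not\equiv0\pmod{3^j}$, so the character is nontrivial and averaging over the finite group in \eqref{eq:projection:identity} gives $E_{3^j\Z^2}X_j=0$. Thus $E_{3^j\Z^2}G_{j-1}=G_j$. Iterating with the tower property in \eqref{eq:projection:properties} gives $E_{3^j\Z^2}G_1=G_j$.

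\emph{Step 3: the $L^2$ bound.} Since $G_1-A=\sum_{j=2}^JX_j$ and the $X_j$ are martingale differences for the filtration $E_{3^j\Z^2}$, they are $L^2$-orthogonal, so $\|G_1-A\|_2^2=\sum_j\|X_j\|_2^2$. Moreover, for distinct $r\bmod3^{j-1}$ the $Y_{j,r}$ have Fourier supports in the sets \eqref{eq:lp:cross-support} (with $j-1$). These are disjoint by the argument in the proof of Lemma~\ref{lem:lp:graph-weak}, so $\|X_j\|_2^2=\sum_r\|Y_{j,r}\|_2^2$.

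Pointwise, $|g_c|\le L$ gives $|Y_{j,r}|^2\le6\sum_{c\ne d}|g_c|^2|g_d|^2$, which is a quantity of size $L^2\sum_c|g_c|^2$. The main obstacle is avoiding a factor $J$ when summing over levels: the crude bound gives $CL^2AJ$. To get a telescoping bound, I would first try the identity
\[
\|G_1-A\|_2^2=\int(G_1-A)G_1=\sum_j\int X_jG_{j-1},
\]
which uses that $X_j$ is $3^{j-1}$-measurable and has mean zero. Then I would estimate $\int X_jG_{j-1}$ by $L^2$ times the mass that leaves the bounded part at level $j$, namely $\int\bigl(\sum_r|h_{j-1,r}|^2-\sum_c|g_c|^2\bigr)$, using $|g_c|\le L$ and mass conservation $\int G_j=A$. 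Summing these decrements over $j$ should give $\le\tfrac92L^2A$, with the constant coming from the three children per node and the Cauchy--Schwarz step.
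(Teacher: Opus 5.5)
Your Steps 1 and 2 are correct and match the paper. The start of Step 3 is also right: $\|G_1-A\|_2^2=\sum_{j=2}^J\|X_j\|_2^2=\sum_{j,r}\|Y_{j,r}\|_2^2$. The gap is the final estimate.

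First, the identity $\|G_1-A\|_2^2=\sum_j\int X_jG_{j-1}$ is circular. Since $\int X_jG_j=0$, it just says $\int X_jG_{j-1}=\|X_j\|_2^2$.

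Second, the proposed bound by $L^2$ times ``the mass that leaves the bounded part at level $j$'' cannot hold. Read literally, the quantity you wrote, $\int(\sum_r|h_{j-1,r}|^2-\sum_{r,c}|g_c|^2)$, equals $\int X_j=0$. Suppose instead you mean the truncated mass $\int\sum_{r'\bmod3^j}|h_{j,r'}|^2\mathbf1_{\{|h_{j,r'}|>L\}}$, and take $L\ge\sum_\xi|a_\xi|$. Then nothing is ever truncated and all these masses vanish. Yet $G_1-A=\sum_{r\bmod3}|h_{1,r}|^2-A$ is nonzero in general. So no per-level bound of this kind exists. Any argument that replaces $|g_d|^2$ by $L^2$ inside $\sum_{c\ne d}|g_c|^2|g_d|^2$ destroys the cancellation between levels, which is why you keep landing on the factor $J$.

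The missing idea is that the quantity which telescopes is the fourth moment, not the $L^2$ mass. We have $\sum_c|g_c|^2=E_{3^j\Z^2}|h_{j-1,r}|^2$ and $E_{3^j\Z^2}Y_{j,r}=0$, so Pythagoras gives
\[
 \|Y_{j,r}\|_2^2=\|h_{j-1,r}\|_4^4-\Bigl\|\sum_c|g_c|^2\Bigr\|_2^2 .
\]
Now apply the four-cap identity \eqref{eq:lp:four-cap} to $(g_c)_c$ at level $j$. This is the arithmetic input from \eqref{eq:lp:second-difference}, and it turns the previous line into
\[
 2\|Y_{j,r}\|_2^2=\|h_{j-1,r}\|_4^4-\sum_{c=0}^2\|g_c\|_4^4 .
\]
Summed over $r$, this is \eqref{eq:lp:truncation-quartic} with the index shifted by one.

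Summing over $j$, fourth moments at consecutive levels cancel. What remains is:
\begin{itemize}
\item $\sum_{r\bmod3}\|h_{1,r}\|_4^4$;
\item the pieces $\int_{\{|h_{j,r}|>L\}}|h_{j,r}|^4$ for $2\le j<J$;
\item a nonpositive level-$J$ term.
\end{itemize}
For $j<J$ each $h_{j,r}$ is a sum of three children, each bounded by $L$, so $|h_{j,r}|\le3L$. Hence each surviving piece is at most $9L^2$ times a summand of $G_1$, and $2\|G_1-A\|_2^2\le9L^2\int G_1=9L^2A$.

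If you use only the pointwise inequality $(\sum_c|g_c|^2)^2\ge\sum_c|g_c|^4$ instead of \eqref{eq:lp:four-cap}, the same telescoping gives the constant $9$ rather than $\tfrac92$. That suffices for Lemma~\ref{lem:lp:bilinear}, but not for the estimate as stated.
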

\begin{proof}
Since $\Theta_{j,r+c3^{j-1}}\subset\Theta_{j-1,r}$ by \eqref{eq:lp:caps},
induction using \eqref{eq:lp:cap-translations}, \eqref{eq:lp:truncation-leaves},
and \eqref{eq:lp:truncation-recursion} gives
\begin{equation}\label{eq:lp:truncation-support}
 \operatorname{supp}\widehat h_{j,r}\subset\Theta_{j,r},\qquad
 |h_{j,r}|\le3L\quad(1\le j<J).
\end{equation}
By \eqref{eq:lp:cap-translations}, squared moduli at level $j$ are fixed by
$E_{3^j\Z^2}$. Cross terms for $c\ne d$ in \eqref{eq:lp:truncation-recursion}
have first-coordinate frequencies $(c-d)3^j\not\equiv0\pmod{3^{j+1}}$.
Thus \eqref{eq:projection:identity} and \eqref{eq:lp:truncation-square} give
\begin{equation}\label{eq:lp:truncation-averages}
 \begin{gathered}
 E_{3^{j+1}\Z^2}|h_{j,r}|^2
 =\sum_{c=0}^2|h_{j+1,r+c3^j}|^2
                 \mathbf1_{\{|h_{j+1,r+c3^j}|\le L\}},\\
 E_{3^j\Z^2}G_j=G_j,\qquad E_{3^{j+1}\Z^2}G_j=G_{j+1}\quad(1\le j<J).
 \end{gathered}
\end{equation}
Each sum in \eqref{eq:lp:truncation-leaves} has at most one term, so
$G_J=\sum_{\xi\in S}|a_\xi|^2=A$ by \eqref{eq:lp:integer-sum}.
Equations~\eqref{eq:projection:properties} and \eqref{eq:lp:truncation-averages}
then give $E_{3^j\Z^2}G_1=G_j$ and $\int G_j=A$.

By \eqref{eq:lp:truncation-square},
\[
 G_j-\sum_{r\bmod3^j}|h_{j,r}|^2
 =G_{j+1}-\sum_{r\bmod3^{j+1}}|h_{j+1,r}|^2
                    \mathbf1_{\{|h_{j+1,r}|\le L\}}.
\]
This common function is fixed by $E_{3^{j+1}\Z^2}$, while
$E_{3^{j+1}\Z^2}(G_j-G_{j+1})=0$ by \eqref{eq:lp:truncation-averages}.
Hence \eqref{eq:projection:module} gives
\[
 \int(G_j-G_{j+1})\Bigl(G_j-\sum_{r\bmod3^j}|h_{j,r}|^2\Bigr)=0.
\]
Apply \eqref{eq:lp:four-cap} at levels $j$ and $j+1$ to
$h_{j,r}$ and $h_{j+1,r}\mathbf1_{\{|h_{j+1,r}|\le L\}}$, respectively.
Their sums agree by \eqref{eq:lp:truncation-recursion}, so
\begin{equation}\label{eq:lp:truncation-quartic}
 2\bigl(\|G_j\|_2^2-\|G_{j+1}\|_2^2\bigr)
 =\sum_{r\bmod3^j}\|h_{j,r}\|_4^4
  -\sum_{r\bmod3^{j+1}}\int_{\{|h_{j+1,r}|\le L\}}|h_{j+1,r}|^4.
\end{equation}
Sum \eqref{eq:lp:truncation-quartic} over $1\le j<J$. Since $G_J=A$ and
$\int G_1=A$, the left side is $2\|G_1-A\|_2^2$.
Splitting each intermediate fourth moment at $L$ gives
\begin{equation}\label{eq:lp:truncation-telescoping}
 \begin{aligned}
 2\|G_1-A\|_2^2
 &=\sum_{r\bmod3}\|h_{1,r}\|_4^4
   +\sum_{j=2}^{J-1}\sum_{r\bmod3^j}\int_{\{|h_{j,r}|>L\}}|h_{j,r}|^4
   -\sum_{r\bmod3^J}\int_{\{|h_{J,r}|\le L\}}|h_{J,r}|^4\\
 &\le9L^2\int G_1=9L^2A.
 \end{aligned}
\end{equation}
The inequality uses $|h_{j,r}|\le3L$ for $j<J$ from
\eqref{eq:lp:truncation-support}, followed by \eqref{eq:lp:truncation-square}.
This proves \eqref{eq:lp:truncation-estimate}.
\end{proof}

\begin{proof}[Proof of Lemma~\ref{lem:lp:bilinear}]
For $\alpha>0$, let
\begin{equation}\label{eq:lp:balanced-level}
 E_\alpha=\{|U_1U_2|>\alpha^2,\quad\max_i|U_i|\le K\alpha\}.
\end{equation}
By \eqref{eq:lp:weak-norm} and \eqref{eq:lp:bilinear-hypotheses},
\begin{equation}\label{eq:lp:bilinear-level-reduction}
 \alpha^6|\{|U_1U_2|>\alpha^2\}|
 \le\alpha^6|E_\alpha|+\sum_{i=1}^2K^{-6}\|U_i\|_{6,\infty}^6
 \le\alpha^6|E_\alpha|+2.
\end{equation}
For $\alpha\le4$, the left side of \eqref{eq:lp:bilinear-level-reduction}
is at most $4^6$. Let $\alpha>4$ and $L=16K/\alpha$.
Apply Lemma~\ref{lem:lp:truncation} to $U_1+U_2$.
Since $S_1\cap S_2=\varnothing$, \eqref{eq:lp:bilinear-hypotheses} gives $A=2$.
For $1\le j<J$ and the distinct classes $r_1,r_2\bmod3$ of $S_1,S_2$, let
\begin{align}
 \Omega_j&=\{G_j>4\}\cap\bigcap_{k=j+1}^J\{G_k\le4\},
 \label{eq:lp:stopping-sets}\\
 V_{i,j}&=\sum_{\substack{r\bmod3^j\\r\equiv r_i\pmod3}}h_{j,r}
 \qquad(i=1,2).\label{eq:lp:stopped-sums}
\end{align}
The sets in \eqref{eq:lp:stopping-sets} are disjoint. For $j\le k\le J$, $G_k$ is
fixed by $E_{3^j\Z^2}$ by \eqref{eq:lp:truncation-averages} and
\eqref{eq:projection:properties}, hence so is $\mathbf1_{\Omega_j}$.
Telescoping \eqref{eq:lp:truncation-recursion} in \eqref{eq:lp:stopped-sums},
and using $|h|\mathbf1_{\{|h|>L\}}\le L^{-1}|h|^2\mathbf1_{\{|h|>L\}}$, gives
\begin{equation}\label{eq:lp:stopped-error}
 \begin{aligned}
 U_i-V_{i,j}
 &=\sum_{k=j+1}^J\sum_{\substack{r\bmod3^k\\r\equiv r_i\pmod3}}
       h_{k,r}\mathbf1_{\{|h_{k,r}|>L\}},\\
 |U_i-V_{i,j}|&\le L^{-1}G_{j+1}\le\frac4L=\frac\alpha{4K}
       \quad\text{on }\{G_{j+1}\le4\},
 \end{aligned}
\end{equation}
where \eqref{eq:lp:truncation-square} bounds the discarded sum.
By \eqref{eq:lp:balanced-level}, $|U_i|>\alpha/K$ on $E_\alpha$.
Thus \eqref{eq:lp:stopped-error} gives
\begin{equation}\label{eq:lp:stopped-product}
 |V_{i,j}|\ge\tfrac34|U_i|,\qquad
 |V_{1,j}V_{2,j}|>\tfrac9{16}\alpha^2
 \quad\text{on }E_\alpha\cap\{G_{j+1}\le4\}.
\end{equation}
Since $G_J=2$ by \eqref{eq:lp:truncation-estimate}, outside $\bigcup_j\Omega_j$
all $G_j\le4$. There $V_{i,1}=h_{1,r_i}$ by \eqref{eq:lp:stopped-sums}, so
\eqref{eq:lp:truncation-square} and \eqref{eq:lp:stopped-error} give
\begin{equation}\label{eq:lp:stopping-cover}
 \max_i|U_i|\le\sqrt{G_1}+\frac4L\le2+\frac\alpha4<\alpha,
 \qquad E_\alpha\subset\bigcup_{j=1}^{J-1}\Omega_j.
\end{equation}

In \eqref{eq:lp:pair-congruence} with $k=j$, take
$\xi_1,\xi_3\equiv r_1$ and $\xi_2,\xi_4\equiv r_2\pmod3$.
Then $\xi_1-\xi_4$ is a unit, so
$\xi_1\equiv\xi_3$ and $\xi_2\equiv\xi_4\pmod{3^j}$.
By \eqref{eq:lp:caps} and \eqref{eq:lp:truncation-support}, only matching
indices survive the projection of \eqref{eq:lp:stopped-sums}. Squared moduli
are fixed by it, so \eqref{eq:lp:truncation-square} gives
\begin{equation}\label{eq:lp:stopped-local-bound}
 E_{3^j\Z^2}|V_{1,j}V_{2,j}|^2
 =\prod_{i=1}^2\Bigl(\sum_{\substack{r\bmod3^j\\r\equiv r_i\pmod3}}
                         |h_{j,r}|^2\Bigr)
 \le\tfrac14G_j^2.
\end{equation}
Approximation as for \eqref{eq:lp:four-cap} justifies the bounded case.
On $\Omega_j$, $G_j>4$ by \eqref{eq:lp:stopping-sets}. Thus
\eqref{eq:lp:truncation-estimate} and Jensen's inequality give
\[
 G_j^2\le4(G_j-2)^2
 =4|E_{3^j\Z^2}(G_1-2)|^2\le4E_{3^j\Z^2}|G_1-2|^2.
\]
Use the invariance of $\Omega_j$, \eqref{eq:projection:module}, and
\eqref{eq:lp:stopped-product}--\eqref{eq:lp:stopped-local-bound}. Disjointness
and \eqref{eq:lp:truncation-estimate} then give
\begin{equation}\label{eq:lp:stopping-summation}
 \begin{aligned}
 \alpha^4|E_\alpha|
 &\le C\sum_{j=1}^{J-1}\int_{\Omega_j}|V_{1,j}V_{2,j}|^2
 \le C\sum_{j=1}^{J-1}\int_{\Omega_j}G_j^2\\
 &\le C\sum_{j=1}^{J-1}\int_{\Omega_j}E_{3^j\Z^2}|G_1-2|^2
 = C\sum_{j=1}^{J-1}\int_{\Omega_j}|G_1-2|^2\\
 &\le C\|G_1-2\|_2^2\le CL^2.
 \end{aligned}
\end{equation}
Since $\alpha^2L^2=16^2K^2$, \eqref{eq:lp:bilinear-level-reduction} and
\eqref{eq:lp:stopping-summation} give
$\alpha^6|\{|U_1U_2|>\alpha^2\}|\le CK^2$ for every $\alpha>0$.
Taking the supremum in \eqref{eq:lp:weak-norm} yields
$\|U_1U_2\|_{3,\infty}^3\le CK^2\le CK^3$.
\end{proof}

\section{Proof of Theorem~\ref{thm:main}}\label{sec:closure}
We prove the cubic energy bound using four propositions and a divisor
with maximal normalized energy. The propositions are proved in
Section~\ref{sec:proposition-proofs}.

\begin{proposition}\label{prop:recurrence}
Let $p=\varpi\bar\varpi$ be a split rational prime, and let
$m,\nu\ge1$ be integers with $p\nmid m$. Let
\begin{equation}\label{eq:split:R}
 R_p(m)=\|E_{p\Z^2}f_{P_m}^{\,2}\|_3.
\end{equation}
There is an absolute constant $C_1\ge1$ such that
\begin{equation}\label{eq:recurrence}
 \begin{aligned}
 J_3(P_{p^\nu m})&\le J_3(P_{p^{\nu-1}m})+2J_3(P_m)+C_1R_p(m)J_3(P_{p^{\nu-1}m})^{2/3}.
 \end{aligned}
\end{equation}
\end{proposition}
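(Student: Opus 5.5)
The plan is to split the circle according to the exponent of $\varpi$ in \eqref{eq:product}, and then expand the sixth power. Points with $k\ge1$ form $\varpi P_{p^{\nu-1}m}$, and the point set with $k=0$ is $\bar\varpi^{\nu}P_m$. These two sets are disjoint, since $\varpi\nmid\bar\varpi^\nu z$ for $z\in P_m$ (because $p\nmid m$). Write
\[
 f_{P_{p^\nu m}}=F+G,\qquad F=f_{\varpi P_{p^{\nu-1}m}},\qquad G=f_{\bar\varpi^\nu P_m}.
\]
By \eqref{eq:scaling} and \eqref{eq:energy}, $\int F^6=J_3(P_{p^{\nu-1}m})=:J$ and $\int G^6=J_3(P_m)$. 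Both sets are symmetric by \eqref{eq:circle:symmetries}, so $F$ and $G$ are real and we may expand $(F+G)^6$ binomially.

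The basic structural fact is that every frequency of a power $F^a$ lies in the lattice $\Lambda=\varpi\Z[i]$, of index $p$. Hence $F^a$ is $\Lambda^\perp$-invariant, and \eqref{eq:projection:module} gives
\[
 \int F^aG^b=\int F^a\,E_\Lambda(G^b).
\]
For $b=1$, the frequencies of $G$ are $\bar\varpi^\nu z$ with $z\in P_m$, and none of them lies in $\Lambda$. Therefore $E_\Lambda G=0$, and the term $6\int F^5G$ vanishes. For $b=2$, H\"older gives
\[
 15\int F^4G^2\le15\|F\|_6^4\|E_\Lambda G^2\|_3=15J^{2/3}\|E_\Lambda G^2\|_3.
\]
Then $E_\Lambda G^2$ is, after the measure-preserving change \eqref{eq:scaling} by $\bar\varpi^\nu$, the projection of $f_{P_m}^2$ onto the frequencies $w=u+v$ with $\varpi\mid w$. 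We need to compare this with $R_p(m)=\|E_{p\Z^2}f_{P_m}^2\|_3$. The frequencies $u+v$ with $\varpi\mid u+v$ but $\bar\varpi\nmid u+v$ must be controlled. I would try to show this via the positivity \eqref{eq:fourier:counts} and the conjugation symmetry $\overline{P_m}=P_m$: the $\varpi$- and $\bar\varpi$-projections are conjugate images of each other, and their product or intersection structure should reduce to $E_{p\Z^2}$, with $\|\cdot\|_3$ controlled by at most a constant times $R_p(m)$. This is where $C_1$ comes from.

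The remaining terms $\int F^3G^3$, $\int F^2G^4$ and $\int FG^5$ are what I expect to be the main obstacle, because the statement allows exactly $2J_3(P_m)$ and no loss in front of $J$. My first attempt would be to treat the three residue pieces more symmetrically. I would write $f=H+G+\bar G$ with $H=f_{pP_{p^{\nu-2}m}}$ and $\bar G=f_{\varpi^\nu P_m}$, so that $\int(H+\bar G)^6=J$ by conjugation, with $F=H+\bar G$. The $\Lambda$-projection then kills every monomial whose $G$-degree forces a frequency outside $\Lambda$.

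For the surviving odd-$G$ terms, I would use $E_\Lambda(G^3)$ and $E_\Lambda(G^5)$. These are supported on sums of three or five points all divisible by $\varpi$, and I would bound them pointwise by $G^2$-type projections times $\|G\|$ factors. Hölder and Young's inequality should then push everything either into a multiple of $R_p(m)J^{2/3}$, or into at most one extra copy of $\int G^6$. That extra copy accounts for the coefficient $2$ in front of $J_3(P_m)$.

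If this absorption fails, I would fall back on the trivial comparison $J_3(P_m)\le J$, which holds because $P_m$ embeds in $P_{p^{\nu-1}m}$ up to scaling and energy is monotone. With this comparison, mixed terms like $J^{a/6}J_3(P_m)^{b/6}$ with $a\ge4$ are dominated by $J^{2/3}J_3(P_m)^{1/3}$. It would then remain to show $J_3(P_m)^{1/3}\lesssim R_p(m)$, which holds since $R_p(m)\ge\|E_{p\Z^2}f_{P_m}^2\|_1=M_m$ and $J_3(P_m)\le M_m^3\cdot M_m^{0}$ fails only by the known logarithmic gap. So this route needs care.
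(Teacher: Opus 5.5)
Your splitting is the conjugate of the paper's (which uses $A=\bar\varpi P_{p^{\nu-1}m}$, $B=\varpi^\nu P_m$, $\Lambda=\bar\varpi\Z[i]$). Your handling of $\int F^6$, $\int G^6$, $\int F^5G=0$ and the H\"older step for $\int F^4G^2$ is correct. The main gap is the terms $\int F^3G^3$, $\int F^2G^4$, $\int FG^5$, which you leave open, and neither of your routes works. If you project first and bound $E_\Lambda(G^3)$, $E_\Lambda(G^5)$ pointwise by $G^2$-type projections times norms of $G$, you pick up factors like $\|G\|_\infty=M_m$. Your fallback needs $J_3(P_m)^{1/3}\lesssim R_p(m)$. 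But $R_p(m)\le\|f_{P_m}^2\|_3=J_3(P_m)^{1/3}$ always holds, and the reverse inequality is essentially the cubic bound being proved. Indeed, with $J_3(P_m)^{1/3}$ in place of $R_p(m)$, the closing argument in Section~\ref{sec:closure} gives no information.

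The missing idea is to absorb these terms pointwise, before any projection. For real $a,b$ with $t=b/a$,
\[
 (a+b)^6-a^6-6a^5b-2b^6=a^4b^2\bigl(15+20t+15t^2+6t^3-t^4\bigr)\le C_1a^4b^2,
\]
because the quartic has negative leading coefficient. Equivalently, by weighted AM--GM each of $|a|^3|b|^3$, $a^2b^4$, $|a||b|^5$ is at most $Ka^4b^2+\varepsilon b^6$. Integrating with $a=F$, $b=G$ leaves only $\int F^6$, $6\int F^5G=0$, $2\int G^6$ and $C_1\int F^4G^2$. The extra copy of $b^6$ is exactly where the coefficient $2$ comes from, and only $F^4G^2$ ever needs the projection.

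The second gap is the identification $\|E_\Lambda G^2\|_3=R_p(m)$. This is not a matter of controlling frequencies with $\varpi\mid u+v$ but $\bar\varpi\nmid u+v$: there are none. Conjugation symmetry alone would not show this, since for general conjugation-invariant sets $S$, $\|E_{\varpi\Z[i]}|f_S|^2\|_3$ can be much larger than $\|E_{p\Z^2}|f_S|^2\|_3$. What you need is the circle identity $\bar u=m/u$:
\begin{itemize}
\item For $u,v\in P_m$ one has $u(\bar u-\bar v)=-(u-v)\bar v$, and $\varpi\nmid u$ because $p\nmid m$.
\item So $\varpi\mid u-v$ gives $\varpi\mid\bar u-\bar v$, i.e.\ $\bar\varpi\mid u-v$, hence $p\mid u-v$.
\item Applying this with $-v\in P_m$, the frequencies $\bar\varpi^\nu(u+v)$ of $G^2$ lying in $\Lambda$ are exactly those with $p\mid u+v$.
\end{itemize}
Then \eqref{eq:scaling} gives $\|E_\Lambda G^2\|_3=\|E_{p\Z^2}f_{P_m}^2\|_3=R_p(m)$.
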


\begin{proposition}\label{prop:local}
Let $p$ be an odd prime and $m\ge1$ an integer such that $p\nmid m$
and $P_m\ne\varnothing$. Write $\mathbb F_p=\Z/p\Z$, and let
\begin{equation}\label{eq:local:classes}
 \begin{gathered}
 S_c=\{z\in P_m:z\equiv c\pmod p\}\qquad(c\in\mathbb F_p^2),\qquad
 \rho_p(P_m)=\max_{c\in\mathbb F_p^2}\frac{|S_c|}{M_m}.
 \end{gathered}
\end{equation}
Then
\begin{equation}\label{eq:local}
 \|E_{p\Z^2}f_{P_m}^{\,2}\|_3^3
 \le M_m^3\bigl(\tfrac43+16C_0\rho_p(P_m)\log(2M_m)\bigr).
\end{equation}
\end{proposition}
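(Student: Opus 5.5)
The plan is to work directly with the function $g:=E_{p\Z^2}f_{P_m}^{\,2}$, identify it as a sum of squared moduli over residue classes modulo $p$, and split off its mean. The mean term will give the main term $M_m^3$, the mean-square fluctuation will give $O(\rho_pM_m^3)$, and the cubic fluctuation is where the logarithm from the sixth-moment bound \eqref{eq:log-energy} should enter. I have not closed the last step, and I cannot yet say where the constant $\tfrac43$ in \eqref{eq:local} comes from.

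\emph{Identifying $g$.} Since $P_m=-P_m$, the function $f=f_{P_m}$ is real, so $f^2=|f|^2$. By \eqref{eq:fourier:counts}, $\widehat{f^2}(v)$ counts the pairs $(u,w)\in P_m^2$ with $u-w=v$. The projection \eqref{eq:projection:identity} keeps exactly the pairs with $u\equiv w\pmod p$. Hence
\[
 g=\sum_{c\in\mathbb F_p^2}|f_{S_c}|^2\ge0,\qquad \int g=M_m,\qquad \|g\|_\infty=g(0)=\sum_c|S_c|^2\le\rho_p(P_m)M_m^2 .
\]
Writing $g=M_m+h$ with $\widehat h(0)=0$ gives
\[
 \int g^3=M_m^3+3M_m\int h^2+\int h^3 .
\]

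\emph{The quadratic term.} Here $\int g^2=\sum_{c,c'}\int|f_{S_c}|^2|f_{S_{c'}}|^2$ counts relations $u_1+u_2=w_1+w_2$ on the circle $P_m$, with $u_1\equiv w_1$ and $u_2\equiv w_2\pmod p$. On a circle, these relations are trivial: either $\{w_1,w_2\}=\{u_1,u_2\}$, or $u_2=-u_1$ and $w_2=-w_1$.
\begin{itemize}
\item The identity pairing contributes $M_m^2$, which cancels against $(\int g)^2$.
\item The swap pairing forces $u_1\equiv u_2$, so it contributes at most $\sum_c|S_c|^2\le\rho_pM_m^2$.
\item The antipodal family contributes at most $\sum_c|S_c||S_{-c}|\le\rho_pM_m^2$.
\end{itemize}
Therefore $\int h^2\le2\rho_pM_m^2$, and the quadratic term is at most $6\rho_pM_m^3$. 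This is absorbed in $16C_0\rho_p\log(2M_m)M_m^3$, because $C_0\ge1$.

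\emph{The cubic term (main obstacle).} This is the step I expect to be hardest. Crude interpolation between $\|h\|_2\lesssim\rho_p^{1/2}M_m$ and $\|h\|_\infty\le\rho_pM_m^2$ loses a full power of $M_m$. The plan is instead to expand
\[
 \int h^3\le\int g\,h^2=\sum_c\int|f_{S_c}|^2h^2 ,
\]
so that one factor always lives on a single class $S_c$. I would then use H\"older in the form $\int|f_{S_c}|^2h^2\le\|f_{S_c}\|_6^2\,\|h\|_3^2$. By \eqref{eq:log-energy} applied to $S=S_c\subset P_m$,
\[
 \|f_{S_c}\|_6^2\le C_0^{1/3}|S_c|\log^{1/3}(2M_m).
\]

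As written, this only reproduces $\|g\|_3\lesssim M_m\log^{1/3}$, with no factor $\rho_p$. So the class sum must be organized better. I would try the following, in order.
\begin{enumerate}
\item Separate the six-point relations $\sum(u_i-w_i)=0$ (with $u_i\equiv w_i$) that are permutation-trivial from the rest. The trivial ones contribute $M_m^3$ plus $O(\rho_pM_m^3)$, because any nonidentity matching forces two points into a common class, which costs a factor $\rho_p$.
\item For the nontrivial relations, use that $u_i-w_i\in p\Z^2$, and bound their number via the per-class sixth moments $\sum_cJ_3(S_c)\le C_0\log(2M_m)\sum_c|S_c|^3\le C_0\rho_p^2M_m^3\log(2M_m)$.
\item Control the mixed-class configurations by Cauchy--Schwarz against these single-class energies, which would give one factor of $\rho_p$ and the stated constant $16C_0$.
\end{enumerate}

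The genuinely delicate point is showing that every nontrivial mixed-class configuration can be charged to a single-class sixth moment with only one loss of $\rho_p$. If that charging works, the three contributions together give \eqref{eq:local}.
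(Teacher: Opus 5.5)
Your identification $g=\sum_c|f_{S_c}|^2$, the mean splitting, and the bound $\int h^2\le2\rho_pM_m^2$ are correct. The proof still has a genuine gap at the cubic term, and the plan you outline cannot close it. Steps 1 and 2 only handle the permutation-trivial relations and the relations lying inside a single class. The whole difficulty is the nontrivial relations in which the pairs $(u_i,w_i)$, $u_i\equiv w_i$, lie in different classes. These are not counted by any $J_3(S_c)$. Bounding a mixed term $\int|f_{S_{c_1}}|^2|f_{S_{c_2}}|^2|f_{S_{c_3}}|^2$ by H\"older or Cauchy--Schwarz against single-class energies gives $\prod_iJ_3(S_{c_i})^{1/3}$. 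That quantity is at least the diagonal count $|S_{c_1}||S_{c_2}||S_{c_3}|$ and cannot tell trivial configurations from nontrivial ones. Summing over triples of classes therefore only returns $(\sum_c|S_c|)^3\log(2M_m)$, the $\ell^1$-in-$c$ bound you already found. No rearrangement of this kind gains $\rho_p$. You need an $L^3$ orthogonality between the fluctuations of different classes, and nothing in your proposal supplies it.

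The paper gets this orthogonality from $p$-adic geometry.
\begin{itemize}
\item \emph{Setup.} Pair $c$ with $-c$ and put $H_c=2|f_{S_c}|^2$, so that $H=\sum_cH_c=E_{p\Z^2}f_{P_m}^{\,2}$ and $\widehat H_c\ge0$. Split each $H_c$ into increments $d_{c,k}=(E_{p^k\Z^2}-E_{p^{k+1}\Z^2})H_c$.
\item \emph{Directions.} If $a,b\in S_c$ and $a-b=p^kv$ with $p\nmid v$, then $0=p^{-k}(|a|^2-|b|^2)=(a+b)\cdot v\equiv2c\cdot v\pmod p$. So $d_{c,k}$ has frequencies $p^kv$ with $v\bmod p$ on the line $c^\perp$. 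Since $c\cdot c=m\ne0$, these lines are distinct for distinct pairs $\pm c$.
\item \emph{Fixed scale.} The $d_{c,k}$ have disjoint Fourier supports, so \eqref{eq:one:scale} gives $\|d_k\|_3^3\le2(\sum_c\|d_{c,k}\|_3^{3/2})^2$.
\item \emph{Across scales.} The inequality $|a-b|^3\le(a-b)^2(a+2b)$, applied with $a=E_{p^k\Z^2}H_c$ and $b=E_{p^{k+1}\Z^2}H_c$, telescopes to $\sum_k\|d_{c,k}\|_3^3\le\|H_c\|_3^3$.
\item \emph{Sorting triples.} Expand $\int H^3=\sum_{u+v+w=0}\widehat H(u)\widehat H(v)\widehat H(w)$; the zero triple gives $(\int H)^3$. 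Sort every other triple by the first $p$-adic level at which not all three frequencies vanish. If exactly two frequencies sit at that level, their residues are opposite, hence on one line, hence from one class. After dropping constraints by positivity, this gives $3\sum_c\int HH_c^2\le3\|H\|_3R^2$, where $R=(\sum_c\|H_c\|_3^{3/2})^{2/3}$. If all three sit at that level, the contribution is $\sum_k\int d_k^3\le2R^3$.
\item \emph{Absorption.} Using $3ab^2\le a^3/4+4b^3$ yields $\|H\|_3^3\le\frac43(\int H)^3+8R^3$. This is where the $\frac43$ you could not locate comes from.
\end{itemize}
Only then is \eqref{eq:log-energy} applied, class by class. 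The $\ell^{3/2}$ structure of $R$ gives $(\sum_c|S_c|^{3/2})^2\le\max_c|S_c|\,(\sum_c|S_c|)^2$. This is the factor $\rho_p$ that an $\ell^1$ sum over classes cannot produce.
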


\begin{proposition}\label{prop:residues}
Let $n=\prod_{j=1}^r p_j^{\nu_j}$, where $r\ge1$,
$p_1<\cdots<p_r$ are distinct split rational primes, and the $\nu_j$ are
positive integers.
If $p=p_r$ and $m=n/p^{\nu_r}$, then the quantity in
\eqref{eq:local:classes} satisfies
\begin{equation}\label{eq:rho}
 \rho_p(P_m)\le\frac1{r+1}.
\end{equation}
\end{proposition}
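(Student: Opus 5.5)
The plan is to turn the congruence $z\equiv w\pmod p$ for two points of $P_m$ into a separation statement for their exponent data in \eqref{eq:product}, and then finish with a sphere-packing count in the exponent box. Write $m=\prod_{j<r}p_j^{\nu_j}$, so every $z\in P_m$ has a unique representation $z=\epsilon\prod_{j<r}\varpi_j^{k_j}\bar\varpi_j^{\nu_j-k_j}$ with $0\le k_j\le\nu_j$. Give the index set $\{1,-1,i,-i\}\times\prod_{j<r}\{0,\dots,\nu_j\}$ the graph metric in which one step changes a single $k_j$ by $\pm1$ or multiplies $\epsilon$ by $\pm i$. The goal is to show that distinct points of one class $S_c$ are at graph distance at least $3$. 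Then the closed radius-$1$ balls around points of $S_c$ are pairwise disjoint. Each such ball contains the point itself, its two unit neighbours $\pm i\epsilon$, and at least one neighbour in each of the $r-1$ coordinate directions, because $\nu_j\ge1$. That is at least $r+2\ge r+1$ elements, so $(r+1)|S_c|\le M_m$, which is \eqref{eq:rho}.

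For the separation step, take $z\ne w$ in $S_c$ with data $(\epsilon,k)$ and $(\epsilon',l)$. Since $w\bar w=m$, we get
\[
 z\bar w=m\,\eta\prod_{j<r}\Bigl(\frac{\varpi_j}{\bar\varpi_j}\Bigr)^{d_j},\qquad \eta=\epsilon\bar\epsilon',\quad d=k-l.
\]
From $z\equiv w\pmod p$ we have $z\bar w\equiv m\pmod p$, and $m$ is invertible mod $p$. Clearing denominators by $\prod_{d_j<0}\varpi_j^{-d_j}\bar\varpi_j^{-d_j}$, which is coprime to $p$, gives an element $\alpha=\prod_{d_j>0}\varpi_j^{d_j}\prod_{d_j<0}\bar\varpi_j^{-d_j}$ with $\eta\alpha\equiv\bar\alpha\pmod p$ in $\Z[i]$. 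Here $\N(\alpha)=\prod_j p_j^{|d_j|}$.

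Next, split by the unit $\eta$.
\begin{itemize}
\item If $\eta=1$, then $p\mid\alpha-\bar\alpha=2i\operatorname{Im}\alpha$. Now $\operatorname{Im}\alpha\ne0$: if $d\ne0$, then $\alpha$ is a nonreal product of split primes, because $\varpi_j$ and $\bar\varpi_j$ are nonassociate; if $d=0$, then $z=w$. Hence $p\le|\operatorname{Im}\alpha|<\sqrt{\N(\alpha)}$.
\item If $\eta=-1$, the same argument applies to $\operatorname{Re}\alpha$. When $d=0$, the congruence reads $p\mid 2$, which is impossible.
\item If $\eta=\pm i$, the congruence says that $p$ divides a rotated combination such as $\alpha\mp i\bar\alpha$. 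Writing $\alpha=x+iy$, this is $(1\mp i)(x\mp y)$ up to a unit, so $p\mid x\mp y$. If $x\mp y=0$, then $\alpha$ is associate to a power of $1+i$, which forces $d=0$; but then the congruence becomes $p\mid(1\mp i)$, again impossible. Hence $p\le|x\mp y|\le\sqrt{2\N(\alpha)}$.
\end{itemize}
In all cases $p^2\le2\prod_j p_j^{|d_j|}$, and the cases with $d=0$ are excluded entirely.

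Finally, use that $p=p_r$ is the largest of the primes, so $p_j<p$ for all $j<r$ and in fact $p_j\le p-4$, since both are $\equiv1\pmod4$. If $\sum_j|d_j|\le1$, then $\prod_j p_j^{|d_j|}<p\le p^2/2$, contradicting the inequality above. So $\sum_j|d_j|\ge2$. The only remaining configurations at graph distance at most $2$ are:
\begin{itemize}
\item $\sum_j|d_j|=2$ with $\eta=1$;
\item $\sum_j|d_j|=1$ with $\eta=\pm i$;
\item $d=0$, which is already excluded.
\end{itemize}
In the second, $\N(\alpha)=p_j<p$ and the bound $p\le\sqrt{2p_j}$ fails. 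In the first, the sharper bound $p\le|\operatorname{Im}\alpha|$ must be compared with $\N(\alpha)=p_ip_j<p^2$. I expect the real work to lie here: checking that $|\operatorname{Im}\alpha|<p$, and likewise the rotated analogues, when $\alpha$ is a product of exactly two of the $\varpi_j^{\pm}$. Since $|\operatorname{Im}\alpha|\le\sqrt{\N(\alpha)}=\sqrt{p_ip_j}<p$, this should go through. The same check confirms that graph distance $\ge3$ holds, which completes the packing argument.
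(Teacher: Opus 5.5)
Your proposal is correct and is essentially the paper's argument. The paper's injection $(z,u)\mapsto uz/\bar u$ on pairs in $S_c\times\mathcal U$, with $\mathcal U=\{1,1+i\}\cup\{\varpi_j,\bar\varpi_j:j<r\}$, is a packing of partial radius-one balls ($z$, $iz$, and one coordinate neighbour per $j<r$) in your exponent graph, with your $\alpha$ playing the role of $u\bar v$; the only difference is that the paper encodes $z\mapsto iz$ as $u=1+i$ (since $(1+i)/(1-i)=i$) and so replaces your case split over $\eta$ with the single bound $|\Im(u\bar v)|\le\sqrt{\N(u)\N(v)}<p$, whereas your full balls give the marginally stronger $(r+2)|S_c|\le M_m$.
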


\begin{proposition}\label{prop:unit}
There is an absolute constant $C_*>0$ such that, if $n\ge1$ has
$r$ distinct prime factors, all of which are split, then
\begin{equation}\label{eq:ESS}
 \frac{J_3(P_n)}{M_n^3}
 \le15+\exp\bigl(C_*(r+1)-\log M_n\bigr).
\end{equation}
\end{proposition}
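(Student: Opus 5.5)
The bound says $J_3(P_n)\le 15M_n^3+e^{C_*(r+1)}M_n^2$. I would prove it by reading the six-point relation as a vanishing sum of elements of a finitely generated multiplicative group, and then applying the Evertse--Schlickewei--Schmidt bound for non-degenerate solutions of $S$-unit equations.

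\emph{Setup.} By \eqref{eq:reduction:circle} we may take $n=\prod_{j=1}^r p_j^{\nu_j}$ with all $p_j$ split. Write a solution of \eqref{eq:energy:def} with $k=3$ as
\[
 \sum_{i=1}^6 \sigma_i z_i=0,\qquad \sigma_1=\sigma_2=\sigma_3=1,\quad \sigma_4=\sigma_5=\sigma_6=-1 .
\]
By \eqref{eq:product}, each $z\in P_n$ satisfies $z/z_0\in\Gamma$ for a fixed $z_0\in P_n$. Here $\Gamma\subset\mathbb Q(i)^\times$ is the multiplicative group generated by $i$ and the quotients $\varpi_j/\bar\varpi_j$, $1\le j\le r$. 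So $\Gamma$ has rank $r$, and its torsion part $\{\pm1,\pm i\}$ is bounded. Dividing by $z_0$ turns the relation into $\sum_{i=1}^6\sigma_i x_i=0$ with $x_i\in\Gamma$.

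\emph{Decomposition into minimal vanishing subsums.} For each solution, choose a partition of $\{1,\dots,6\}$ into blocks $B$ such that each block sum $\sum_{i\in B}\sigma_i x_i$ vanishes and has no proper vanishing subsum. The number of possible partitions is absolute. No block has size $1$, since $x_i\neq0$.
\begin{itemize}
\item A block of size $2$ means $x_i=\pm x_{i'}$. The whole block is then determined by one of its elements.
\item A block of size $s\ge3$ gives, after dividing by one term, a non-degenerate solution of $a_1y_1+\cdots+a_{s-1}y_{s-1}=1$ with $a_l\in\{\pm1\}$ and $y_l\in\Gamma$. By Evertse--Schlickewei--Schmidt there are at most $\exp(c(s)(r+1))$ such solutions, with $c(s)$ depending only on $s\le 6$. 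Hence the block is determined by one element up to $e^{C(r+1)}$ choices.
\end{itemize}

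\emph{Counting by partition type.} If every block has size $2$, the partition is one of the $15$ perfect matchings of six indices. Each matching contributes at most $M_n^3$ solutions, because each pair is fixed by one free point of $P_n$; this gives the term $15M_n^3$. Every other partition has type $\{3,3\}$, $\{4,2\}$ or $\{6\}$. So there are at most two blocks, each with one free point, and each such partition contributes at most $M_n^2e^{2C(r+1)}$. Summing over the boundedly many partitions gives
\[
 J_3(P_n)\le 15M_n^3+e^{C_*(r+1)}M_n^2 ,
\]
which after dividing by $M_n^3$ is \eqref{eq:ESS}. Overcounting across partitions only helps, since we need an upper bound.

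\emph{Main obstacle.} The key step is invoking a uniform $S$-unit bound that depends only on the rank $r$ and the number of terms, and not on the particular primes or the exponents $\nu_j$. I would use the Evertse--Schlickewei--Schmidt theorem in the form $A(k,r)\le\exp\bigl((6k)^{3k}(r+1)\bigr)$ for the equation $a_1x_1+\cdots+a_kx_k=1$ over a field of characteristic $0$, with $x$ ranging over a group of rank $r$. I would also check that the torsion of $\Gamma$ and the fixed coefficients $\pm1$ cost only a constant factor.
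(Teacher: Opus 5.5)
Your proof is correct. It reaches exactly the stated bound $J_3(P_n)\le15M_n^3+M_n^2e^{C_*(r+1)}$, but it handles the degenerate sextuples differently from the paper.

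\textbf{Your route.} You split each zero-sum sextuple into minimal vanishing blocks and apply Evertse--Schlickewei--Schmidt to every block of size $\ge3$. The types $\{3,3\}$ and $\{4,2\}$ then each cost $M_n^2e^{C(r+1)}$, with one free point per block.

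\textbf{The paper's route.} The paper shows these types never occur on a lattice circle, for two reasons.
\begin{itemize}
\item Since $n$ is odd, every $z\in P_n$ satisfies $z\equiv1\pmod{1+i}$, so every vanishing subsum has even length.
\item A vanishing four-term sum $u+v+w+z=0$ on the circle satisfies $(u+v)(u+w)(v+w)=0$, using $\bar u=n/u$. Hence it splits into opposite pairs.
\end{itemize}
So every sextuple is either three opposite pairs or fully non-degenerate, and ESS is invoked only once. This gives the stronger remainder $M_ne^{C_*(r+1)}$, which the paper then weakens to the stated form using $M_n\ge4$.

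\textbf{Comparison.} Your argument is more general. It uses only that $P_n$ lies in a coset $z_0\Gamma$ of a rank-$r$ multiplicative group, so it applies verbatim to any such set. The price is an extra factor $M_n$ in the error term. Neither the proposition nor its use in the proof of Theorem~\ref{thm:main} needs that factor: the deduction $\log M_d<C_*(s+1)$ only requires the stated form.

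\textbf{One bookkeeping point.} In the ESS theorem, the solution vector $(y_1,\dots,y_{s-1})$ ranges over $\Gamma^{s-1}\subset(\mathbb Q(i)^\times)^{s-1}$. That group has rank $(s-1)r\le5r$, not $r$. Since $s\le6$, this still gives $e^{C(r+1)}$. ESS already allows torsion and arbitrary nonzero coefficients, so the torsion of $\Gamma$ and the coefficients $\pm1$ need no separate treatment.
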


\begin{proof}[Proof of Theorem~\ref{thm:main}]
By \eqref{eq:reduction:circle} and \eqref{eq:scaling}, assume that $n$
is a product of split prime powers. Choose $d\mid n$ such that
\begin{equation}\label{eq:closure:maximum}
 L:=\frac{J_3(P_d)}{M_d^3}
   =\max_{k\mid n}\frac{J_3(P_k)}{M_k^3}.
\end{equation}
It suffices to bound $L$. We may assume $L>16$.
Since the first five entries determine the sixth,
$J_3(P_1)\le4^5=16M_1^3$, so $d>1$.
Let $s$ be the number of distinct prime factors of $d$, and write
$d=p^\nu m$, where $p$ is the largest prime factor and $p\nmid m$.
By \eqref{eq:product},
\begin{equation}\label{eq:closure:children}
 M_d=(\nu+1)M_m,\qquad M_{d/p}=\nu M_m,\qquad 2M_m\le M_d.
\end{equation}

To bound $L$, it suffices to bound $R_p(m)/M_m$.
Indeed, since $m,d/p\mid n$, \eqref{eq:closure:maximum} and
\eqref{eq:closure:children} give $J_3(P_m)\le LM_m^3$ and
$J_3(P_{d/p})\le L\nu^3M_m^3$. Substitution into \eqref{eq:recurrence}
and division by $M_m^3$ yield
\begin{align}
 L\bigl((\nu+1)^3-\nu^3-2\bigr)
 &\le C_1\frac{R_p(m)}{M_m}L^{2/3}\nu^2,
 \label{eq:closure:comparison}\\
 L&\le\left(\frac{C_1}{3}\right)^3
       \left(\frac{R_p(m)}{M_m}\right)^3,
 \label{eq:closure:bound}
\end{align}
where the second inequality uses
$(\nu+1)^3-\nu^3-2=3\nu^2+3\nu-1\ge3\nu^2$.

By \eqref{eq:split:R} and \eqref{eq:local}, it then suffices to bound
$\rho_p(P_m)\log(2M_m)$, since
\begin{equation}\label{eq:closure:R}
 \left(\frac{R_p(m)}{M_m}\right)^3
 \le\frac43+16C_0\rho_p(P_m)\log(2M_m).
\end{equation}
By \eqref{eq:rho} and \eqref{eq:closure:children}, this reduces to
bounding $\log M_d/(s+1)$:
\begin{equation}\label{eq:closure:residue}
 \rho_p(P_m)\log(2M_m)\le\frac{\log M_d}{s+1}.
\end{equation}
Finally, \eqref{eq:ESS} gives
\begin{equation}\label{eq:closure:range}
 16<L\le15+\exp\bigl(C_*(s+1)-\log M_d\bigr)
 \quad\Longrightarrow\quad \log M_d<C_*(s+1).
\end{equation}
Combining \eqref{eq:closure:maximum} and
\eqref{eq:closure:bound}--\eqref{eq:closure:range}, we obtain
\[
 \frac{J_3(P_n)}{M_n^3}\le L
 \le\left(\frac{C_1}{3}\right)^3\left(\frac43+16C_0C_*\right).
\]
Together with the case $L\le16$, this proves \eqref{eq:main}.
\end{proof}

\section{Proofs of the propositions}\label{sec:proposition-proofs}
We prove the four propositions used in Section~\ref{sec:closure}:
the prime-power recurrence, the local estimate, the residue-class bound,
and the bound for six-point relations.

\subsection{Prime-power recurrence}\label{sec:split}
\begin{proof}[Proof of Proposition~\ref{prop:recurrence}]
For $a,b\in\mathbb R$ with $a\ne0$, let $t=b/a$. Then
\begin{equation}\label{eq:split:scalar}
 \begin{aligned}
 (a+b)^6-a^6-6a^5b-2b^6
 &=a^4b^2(15+20t+15t^2+6t^3-t^4)\\
 &\le C_1a^4b^2,
 \end{aligned}
\end{equation}
since the quartic is bounded above. The inequality also holds for $a=0$.

Separate the points according to divisibility by $\bar\varpi$.
Unique factorization, as in \eqref{eq:reduction:circle}--\eqref{eq:product}, gives
\begin{equation}\label{eq:split:sets}
 P_{p^\nu m}=A\sqcup B,\qquad
 A=\bar\varpi P_{p^{\nu-1}m},\qquad B=\varpi^\nu P_m.
\end{equation}
By \eqref{eq:circle:symmetries}, $A=-A$ and $B=-B$, so $f_A,f_B$ are real.
Let $\Lambda=\bar\varpi\Z[i]$. For $u,v\in P_m$, $p\nmid m$ makes $u$ invertible
modulo $\bar\varpi$. Since $u\bar u=v\bar v=m$ by \eqref{eq:circle:def},
\[
 u(\bar u-\bar v)=-(u-v)\bar v,\qquad
 \bar\varpi\mid u-v\ \Longrightarrow\ \bar\varpi\mid\bar u-\bar v
 \ \Longrightarrow\ \varpi\mid u-v.
\]
The last implication is conjugation. Coprimality of $\varpi,\bar\varpi$ now gives
\begin{equation}\label{eq:first:congruence}
 \bar\varpi\mid\varpi^\nu(u-v)
 \quad\Longleftrightarrow\quad\bar\varpi\mid u-v
 \quad\Longleftrightarrow\quad p\mid u-v.
\end{equation}
By \eqref{eq:split:sets}, $A\subset\Lambda$ and $B\cap\Lambda=\varnothing$.
Thus \eqref{eq:first:congruence}, \eqref{eq:fourier:counts},
\eqref{eq:projection:identity}, \eqref{eq:scaling}, and \eqref{eq:split:R} yield
\begin{equation}\label{eq:split:projections}
 \begin{gathered}
 E_\Lambda f_A=f_A,\qquad E_\Lambda f_B=0,\\
 \|E_\Lambda(f_B^2)\|_3
 =\Bigl\|\sum_{\substack{u,v\in P_m\\p\mid u-v}}
       e^{2\pi i\varpi^\nu(u-v)\cdot x}\Bigr\|_3
 =R_p(m).
 \end{gathered}
\end{equation}
In particular, \eqref{eq:projection:module} gives
$\int f_A^5f_B=\int f_A^5E_\Lambda f_B=0$.
Integrating \eqref{eq:split:scalar} with $a=f_A$, $b=f_B$, and using
\eqref{eq:split:sets}--\eqref{eq:split:projections}, H\"older,
\eqref{eq:energy}, and \eqref{eq:scaling}, we obtain
\begin{equation}\label{eq:mixed:moments}
 \begin{aligned}
 J_3(P_{p^\nu m})
 &\le\int f_A^6+2\int f_B^6+C_1\int f_A^4E_\Lambda(f_B^2)\\
 &\le\|f_A\|_6^6+2\|f_B\|_6^6+C_1\|f_A\|_6^4\|E_\Lambda(f_B^2)\|_3\\
 &=J_3(P_{p^{\nu-1}m})+2J_3(P_m)
       +C_1J_3(P_{p^{\nu-1}m})^{2/3}R_p(m),
 \end{aligned}
\end{equation}
which is \eqref{eq:recurrence}.
\end{proof}

\subsection{Local estimate}\label{sec:local}
\begin{lemma}\label{lem:local:cubic}
Let $p$ be an odd prime and let $(H_c)_c$ be a finite family of
nonnegative trigonometric polynomials on $\T^2$ with $\widehat H_c\ge0$. Let
\begin{equation}\label{eq:local:increments:definition}
 \begin{gathered}
 H=\sum_cH_c,\qquad R=\Bigl(\sum_c\|H_c\|_3^{3/2}\Bigr)^{2/3},\\
 d_{c,k}=(E_{p^k\Z^2}-E_{p^{k+1}\Z^2})H_c,\qquad d_k=\sum_cd_{c,k}\quad(k\ge0).
 \end{gathered}
\end{equation}
Suppose that distinct one-dimensional subspaces $\ell_c\subset\mathbb F_p^2$
satisfy
\begin{equation}\label{eq:local:directions}
 \operatorname{supp}\widehat d_{c,k}
 \subset\{p^kv:v\in\Z^2,\ v\bmod p\in\ell_c\setminus\{0\}\}
 \qquad(k\ge0).
\end{equation}
Then
\begin{equation}\label{eq:positive:bound}
 \|H\|_3^3\le\tfrac43\Bigl(\int H\Bigr)^3+8R^3.
\end{equation}
\end{lemma}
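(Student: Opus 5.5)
The plan is to split off the mean, $H=A+D$ with $A=\int H$ and $D=\sum_c g_c$, where $g_c=H_c-m_c$ and $m_c=\int H_c$. Since each $H_c$ is a trigonometric polynomial, $E_{p^K\Z^2}H_c=m_c$ for large $K$. Hence $g_c=\sum_{k\ge0}d_{c,k}$ is a finite telescoping sum. By \eqref{eq:local:directions}, the Fourier support of $g_c$ lies in
\[
 \Gamma_c=\{p^kv:k\ge0,\ v\bmod p\in\ell_c\setminus\{0\}\}.
\]
Each nonzero frequency has a unique $p$-adic valuation $k$ and a unique reduction of $p^{-k}$ times it modulo $p$, which spans a single line. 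So the sets $\Gamma_c$ are pairwise disjoint, because the $\ell_c$ are distinct. Since $\int D=0$, expanding the cube gives
\[
 \|H\|_3^3=\int H^3=A^3+3A\int D^2+\int D^3 .
\]
Here $H\ge0$, so $\|H\|_3^3=\int H^3$. The three terms are then bounded separately.

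\emph{Quadratic term.} Disjointness of the $\Gamma_c$ gives $\int D^2=\sum_c\|g_c\|_2^2\le\sum_c\|H_c\|_2^2$. By H\"older, $\|H_c\|_2^2\le\|H_c\|_1^{1/2}\|H_c\|_3^{3/2}=m_c^{1/2}\|H_c\|_3^{3/2}$. Since $m_c\le A$, this gives $\int D^2\le A^{1/2}R^{3/2}$. Young's inequality, with exponents $2$ and $2$ applied to $A^{3/2}$ and $R^{3/2}$, then gives $3A\int D^2\le 3A^{3/2}R^{3/2}\le\tfrac13A^3+\tfrac{27}4R^3$. This produces the $\tfrac43(\int H)^3$ term, and the $\tfrac{27}{4}R^3$ is tightened below.

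\emph{Cubic term.} I would expand $\int D^3=\sum_{a,b,c}\int g_ag_bg_c$. The diagonal part is at most $\sum_c\|g_c\|_3^3$, and this is $\lesssim\sum_c\|H_c\|_3^3\le R^3$. For the off-diagonal terms I would use a frequency argument. In a zero-sum triple of frequencies from $\Gamma_a,\Gamma_b,\Gamma_c$, the minimal valuation must be attained at least twice. Two frequencies of the same level coming from distinct lines cannot sum to $0\bmod p^{k+1}$. So a term with exactly two equal indices $a=b\ne c$ only sees interactions where the two $\Gamma_a$ frequencies share the minimal level. This lets me insert the projections $E_{p^k\Z^2}$ and reduce the $(a,a,c)$ terms to $\int H_a^2H_c$-type quantities. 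The terms with three distinct indices are the genuinely new ones.

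To control everything at once, I would exploit positivity. Since $\widehat H_c\ge0$, any integral $\int H_aH_bH_c$ is a sum of nonnegative coefficient products. Restricting frequencies, that is, passing to the $g_c$, the $d_{c,k}$, or the projections, can therefore only decrease it, up to the signs introduced by subtracting means. This suggests bounding $|\int D^3|$ by $\int(\sum_c\widetilde H_c)^3$, where $\widetilde H_c$ keeps only the admissible frequencies. One can then apply \eqref{eq:one:scale} of Lemma~\ref{lem:projection:summation} to the projections $\Pi_c$ onto $\Gamma_c$, obtaining $2(\sum_c\|g_c\|_3^{3/2})^2\lesssim R^3$.

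The main obstacle is twofold. First, I must justify that the projections onto $\Gamma_c$ have $L^\infty$ norm at most $2$, or circumvent this using positivity and the martingale structure $d_{c,k}=(E_{p^k\Z^2}-E_{p^{k+1}\Z^2})H_c$. Second, I must track the constants so that the total is exactly $\tfrac43A^3+8R^3$. For the constants I would first try to absorb the quadratic term more efficiently, using $\int D^2=\sum_c(\|H_c\|_2^2-m_c^2)$ rather than the crude bound. I would then balance the Young splitting against the constant $2$ from \eqref{eq:one:scale}.
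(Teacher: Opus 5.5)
There is a genuine gap at the cubic term, and it is the obstacle you flagged. Your plan controls $\int D^3$ by applying \eqref{eq:one:scale} to the projections $\Pi_c$ onto the full multi-scale cones $\Gamma_c$. That step fails, because these projections are not bounded on $L^\infty$ at all, let alone by $2$.

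For example, let $\ell_c$ be spanned by $(1,0)$. Then $\mathbf 1_{\Gamma_c}(\xi)=\mathbf 1[v_p(\xi_1)<v_p(\xi_2)]$, and the convolution kernel is
\[
 \sum_{a\ge0}(\sigma_a-\sigma_{a+1})\otimes\sigma_{a+1},
\]
where $\sigma_a$ is the uniform probability measure on $p^{-a}\Z/\Z$. Consider the points $(s,t)$ with $t$ of exact order $p^b$. A direct computation shows they carry total variation $2(p-1)/(p+1)$ for every $b\ge1$. So the kernel is not a finite measure, and the norms of its truncations grow linearly with the number of scales. Positivity of $\widehat H_c$ cannot rescue the interpolation behind \eqref{eq:one:scale}. (Subtracting means creates no signs, by the way: $\widehat g_c=\widehat H_c\mathbf 1_{\{v\ne0\}}\ge0$.)

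What is missing is a scale-by-scale argument with a way to sum the scales, and a separate treatment of cross-scale triples.

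\emph{Single scale.} For fixed $k$, the operators $E_{\{p^kv:\,v\bmod p\in\ell_c\}}-E_{p^{k+1}\Z^2}$ are differences of two averaging operators. They have norm at most $2$, have disjoint supports, and fix $d_{c,k}$. Hence $\|d_k\|_3^3\le2(\sum_c\|d_{c,k}\|_3^{3/2})^2$.

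\emph{Summing the scales.} The paper proves $\sum_k\|d_{c,k}\|_3^3\le\|H_c\|_3^3$. It uses $\int(E_{p^{k+1}\Z^2}H_c)^2d_{c,k}=0$ and the pointwise bound $|a-b|^3\le(a-b)^2(a+2b)=a^3-b^3-3b^2(a-b)$, with $a=E_{p^k\Z^2}H_c\ge0$ and $b=E_{p^{k+1}\Z^2}H_c\ge0$. This is exactly where $H_c\ge0$ enters. Minkowski in $\ell^2$ then gives $\sum_k\|d_k\|_3^3\le2R^3$.

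\emph{Cross-scale triples.} The bound above covers only triples whose three frequencies share the minimal level. In every other zero-sum triple, two frequencies sit at the minimal level on one line, and the third is at a higher level or is zero. By positivity, all of these are dominated by $3\sum_c\int HH_c^2\le3\|H\|_3R^2$. This term is absorbed via $3ab^2\le a^3/4+4b^3$, which yields $\tfrac34\|H\|_3^3\le(\int H)^3+6R^3$.

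\emph{Constants.} Keeping $3A\int D^2$ separate, as you do, makes the stated constants unattainable. Your Young step alone already spends $\tfrac{27}{4}R^3$. The cross-scale $(a,a,c)$ terms still need an absorption, which multiplies everything by $\tfrac43$. The paper avoids this by putting the zero-frequency triples into the single term $3\sum_c\int HH_c^2$.
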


\begin{proof}[Proof of Proposition~\ref{prop:local}]
For each nonempty class in \eqref{eq:local:classes},
$c\cdot c=m\ne0$ in $\mathbb F_p$, so $c\ne-c$.
All sums over $c$ in this proof use one representative of each nonempty pair
$\{c,-c\}$. By \eqref{eq:circle:symmetries}, $S_{-c}=-S_c$ and
$|f_{S_{-c}}|=|f_{S_c}|$. In \eqref{eq:local:increments:definition}, choose
$H_c=2|f_{S_c}|^2$. Then \eqref{eq:fourier:counts} and
\eqref{eq:projection:identity} give
\begin{equation}\label{eq:local:polynomials}
 H=\sum_c2|f_{S_c}|^2=E_{p\Z^2}f_{P_m}^{\,2},\qquad
 \widehat H_c(v)=2\#\{(a,b)\in S_c^2:a-b=v\}\ge0.
\end{equation}
By \eqref{eq:local:classes} and \eqref{eq:local:polynomials},
\begin{equation}\label{eq:masses}
 \int H=M_m,\qquad \sum_c|S_c|=M_m/2,\qquad
 \max_c|S_c|\le\rho_p(P_m)M_m.
\end{equation}
Let $\ell_c=\{v\in\mathbb F_p^2:c\cdot v=0\}$. For $a,b\in S_c$ with
$a-b=p^kv$ and $v\not\equiv0\pmod p$, \eqref{eq:circle:def} gives
\begin{equation}\label{eq:local:circle:identity}
 0=p^{-k}(|a|^2-|b|^2)=(a+b)\cdot v\equiv2c\cdot v\pmod p.
\end{equation}
Together with \eqref{eq:local:polynomials} and
\eqref{eq:local:increments:definition}, this proves \eqref{eq:local:directions}.
If $\ell_{c'}=\ell_c$, then $c'=tc$ for some $t\in\mathbb F_p^\times$.
Since $c\cdot c=c'\cdot c'=m\ne0$, we have $t^2=1$, hence $c'=\pm c$.
Our choice of representatives gives $c'=c$, so Lemma~\ref{lem:local:cubic} applies.
By \eqref{eq:circle:symmetries}, $M_m\ge4$, hence
$\log(2+|S_c|)\le\log(2M_m)$.
Equations~\eqref{eq:local:increments:definition}, \eqref{eq:energy},
\eqref{eq:log-energy}, and \eqref{eq:masses} yield
\begin{equation}\label{eq:local:mass:bound}
 \begin{aligned}
 R^3
 &=8\Bigl(\sum_cJ_3(S_c)^{1/2}\Bigr)^2\\
 &\le8C_0\log(2M_m)\Bigl(\sum_c|S_c|^{3/2}\Bigr)^2\\
 &\le8C_0\log(2M_m)(\max_c|S_c|)\Bigl(\sum_c|S_c|\Bigr)^2
 \le2C_0\rho_p(P_m)M_m^3\log(2M_m).
 \end{aligned}
\end{equation}
Substituting \eqref{eq:masses} and \eqref{eq:local:mass:bound} into
\eqref{eq:positive:bound} proves \eqref{eq:local}.
\end{proof}

\begin{proof}[Proof of Lemma~\ref{lem:local:cubic}]
By \eqref{eq:projection:identity} and finite Fourier support,
$E_{p^k\Z^2}H_c=\int H_c$ and $d_{c,k}=0$ for large $k$.
Equations~\eqref{eq:local:increments:definition}, \eqref{eq:projection:properties},
and \eqref{eq:projection:module} give
\[
 \int(E_{p^{k+1}\Z^2}H_c)^2d_{c,k}
 =\int(E_{p^{k+1}\Z^2}H_c)^2E_{p^{k+1}\Z^2}d_{c,k}=0.
\]
For $a,b\ge0$, $(a-b)^2(a+2b)=a^3-b^3-3b^2(a-b)\ge|a-b|^3$.
Apply this with $a=E_{p^k\Z^2}H_c$, $b=E_{p^{k+1}\Z^2}H_c$,
which are nonnegative by \eqref{eq:projection:properties}, and sum:
\begin{equation}\label{eq:increments}
 \begin{aligned}
 \|d_{c,k}\|_3^3
 &\le\int d_{c,k}^2\bigl(E_{p^k\Z^2}H_c+2E_{p^{k+1}\Z^2}H_c\bigr)\\
 &=\|E_{p^k\Z^2}H_c\|_3^3-\|E_{p^{k+1}\Z^2}H_c\|_3^3,\\
 \sum_{k\ge0}\|d_{c,k}\|_3^3
 &\le\|H_c\|_3^3-\Bigl(\int H_c\Bigr)^3\le\|H_c\|_3^3.
 \end{aligned}
\end{equation}
For fixed $k$, the projections
\[
 E_{\{p^kv:v\in\Z^2,\ v\bmod p\in\ell_c\}}-E_{p^{k+1}\Z^2}
\]
have disjoint Fourier supports by distinctness of the $\ell_c$, have
$L^\infty$ norm at most $2$ by \eqref{eq:projection:properties}, and fix
$d_{c,k}$ by \eqref{eq:local:directions}. Thus \eqref{eq:one:scale},
Minkowski's inequality in $\ell^2$, and \eqref{eq:increments} give
\begin{equation}\label{eq:cubic:increments}
 \begin{aligned}
 \sum_k\|d_k\|_3^3
 &\le2\sum_k\Bigl(\sum_c\|d_{c,k}\|_3^{3/2}\Bigr)^2\\
 &\le2\left[\sum_c\Bigl(\sum_k\|d_{c,k}\|_3^3\Bigr)^{1/2}\right]^2
 \le2R^3.
 \end{aligned}
\end{equation}

By \eqref{eq:projection:identity} and \eqref{eq:local:increments:definition},
\[
 \widehat d_{c,k}(v)
 =\mathbf1_{p^k\Z^2\setminus p^{k+1}\Z^2}(v)\widehat H_c(v)\ge0.
\]
Expand $\int H^3$ using \eqref{eq:fourier:coefficients}.
For a contributing triple $u+v+w=0$, not all zero, choose $k\ge0$ so that
all three frequencies lie in $p^k\Z^2$, but not all in $p^{k+1}\Z^2$.
At least two lie outside $p^{k+1}\Z^2$. If exactly two do, relabel so that
$w\in p^{k+1}\Z^2$. The opposite nonzero residues force the same index $c$
in \eqref{eq:local:directions}, so
\[
 p^{-k}u\equiv-p^{-k}v\not\equiv0\pmod p
 \quad\Longrightarrow\quad
 \widehat H(u)\widehat H(v)=\sum_c\widehat H_c(u)\widehat H_c(v).
\]
If all three lie outside $p^{k+1}\Z^2$, their contribution is in $\int d_k^3$.
Dropping the restrictions in the first case increases the sum because
$\widehat H_c\ge0$. The three positions of $w$ therefore give
\begin{equation}\label{eq:local:fourier:cubic}
 \begin{aligned}
 \int H^3
 &=\sum_{u+v+w=0}\widehat H(u)\widehat H(v)\widehat H(w)\\
 &\le\Bigl(\int H\Bigr)^3+3\sum_c\int HH_c^2+\sum_k\int d_k^3.
 \end{aligned}
\end{equation}
By \eqref{eq:local:increments:definition}, $\sum_c\|H_c\|_3^2\le R^2$.
H\"older, \eqref{eq:cubic:increments}--\eqref{eq:local:fourier:cubic}, and
$3ab^2\le a^3/4+4b^3$ for $a,b\ge0$ yield
\begin{equation}\label{eq:local:cubic:inequality}
 \begin{aligned}
 \|H\|_3^3
 &\le\Bigl(\int H\Bigr)^3+3\|H\|_3R^2+2R^3\\
 &\le\Bigl(\int H\Bigr)^3+\tfrac14\|H\|_3^3+6R^3.
 \end{aligned}
\end{equation}
Subtracting $\|H\|_3^3/4$ proves \eqref{eq:positive:bound}.
\end{proof}

\subsection{Residue-class bound}\label{sec:concentration}
\begin{proof}[Proof of Proposition~\ref{prop:residues}]
Let $p,m,r$ be as in Proposition~\ref{prop:residues}, and fix a nonempty
$S_c$ in \eqref{eq:local:classes}. For $j<r$, choose $\varpi_j$ as in
\eqref{eq:product}, with positive real and imaginary parts. Let
\begin{equation}\label{eq:packing:representatives}
 \mathcal U=\{1,1+i\}\cup\{\varpi_j,\bar\varpi_j:1\le j<r\}.
\end{equation}
By \eqref{eq:packing:representatives}, every $u\in\mathcal U$ has
coprime integer coordinates, $\Re u>0$, and $\N(u)<p$.
For $(z,u)\in S_c\times\mathcal U$ with $\bar u\mid uz$, consider
\begin{equation}\label{eq:packing:images}
 (z,u)\longmapsto\frac{uz}{\bar u}\in P_m,\qquad
 \N\Bigl(\frac{uz}{\bar u}\Bigr)=\N(z)=m.
\end{equation}
If $uz/\bar u=vw/\bar v$, cross-multiplication and
$z\equiv w\equiv c\pmod p$ give $(u\bar v-v\bar u)c\equiv0\pmod p$.
By \eqref{eq:local:classes}, $c\bar c\equiv m\not\equiv0\pmod p$, so
$c^{-1}\equiv m^{-1}\bar c\pmod p$. Hence
\begin{equation}\label{eq:packing:determinant}
 \begin{gathered}
 u\bar v-v\bar u=2i\Im(u\bar v)\equiv0\pmod p,\\
 |\Im(u\bar v)|\le\sqrt{\N(u)\N(v)}<p.
 \end{gathered}
\end{equation}
Since $p$ is odd, \eqref{eq:packing:determinant} implies
$\Im(u\bar v)=0$, hence $u/v\in\mathbb Q$.
Coprime coordinates force $u=\pm v$, and positive real parts give $u=v$.
Then $z=w$, so \eqref{eq:packing:images} is injective.

For every $z\in S_c$, $\bar u\mid uz$ holds for $u=1,1+i$, since
$(1+i)/(1-i)=i$. For each $j<r$, \eqref{eq:product} gives
$\varpi_j\mid z$ or $\bar\varpi_j\mid z$.
Choose $u=\bar\varpi_j$ in the first case and $u=\varpi_j$ in the second.
Thus \eqref{eq:packing:representatives}--\eqref{eq:packing:images} give
\begin{equation}\label{eq:packing:count}
 M_m\ge\#\{(z,u)\in S_c\times\mathcal U:\bar u\mid uz\}
 \ge(r+1)|S_c|.
\end{equation}
Dividing by $M_m$ and taking the maximum over $c$ in
\eqref{eq:local:classes} proves \eqref{eq:rho}.
\end{proof}

\subsection{Six-point relations}\label{sec:unit}
The next lemma is the six-term case of Bourgain's bound
\cite{BourgainRandom}, which follows from
\cite[Theorem~1.1]{ESS}. For $n=1$, the trivial bound $4^5$ for
ordered zero-sum sextuples suffices.
\begin{lemma}\label{lem:unit:nondegenerate}
Let $n\ge1$ have $r$ distinct prime factors, all split. Let $N_6$ count the
ordered sextuples in $P_n^6$ with zero sum and no nonempty proper vanishing subsum.
For an absolute constant $C_*>0$,
\begin{equation}\label{eq:unit:nondegenerate}
 N_6\le M_ne^{C_*(r+1)}.
\end{equation}
\end{lemma}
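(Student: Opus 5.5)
The plan is to reduce Lemma~\ref{lem:unit:nondegenerate} to the Evertse--Schlickewei--Schmidt bound for nondegenerate solutions of unit equations in a multiplicative group of finite rank, as in Bourgain's argument \cite{BourgainRandom}. For $n=1$ the trivial bound $4^5\le M_1e^{C_*}$ suffices once $C_*\ge\log 64$. So assume $n>1$ is a product of split prime powers, and fix $z_1\in P_n$ (there are $M_n$ choices). Divide a nondegenerate zero-sum sextuple by $-z_1$. This gives $1=\sum_{j=2}^6x_j$ with $x_j=-z_j/z_1\in\mathbb Q(i)^\times$. The absence of a vanishing proper subsum of $z_1+\cdots+z_6=0$ means that no proper subsum of $\sum_{j\ge2}x_j$ vanishes. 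Indeed, a vanishing subsum of the $x_j$ is a vanishing subsum of the $z_j$ not containing $z_1$. Also, $\sum_{j\ge 2}x_j=1$ has no subsum equal to $1$ other than the full one, since the complement would then vanish. Hence $(x_2,\dots,x_6)$ is a nondegenerate solution of $x_2+\cdots+x_6=1$, and the map from sextuples with fixed $z_1$ to such solutions is injective.

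Next I will identify a group of controlled rank containing all $x_j$. By \eqref{eq:product}, every $z\in P_n$ equals $\epsilon\prod_j\varpi_j^{k_j}\bar\varpi_j^{\nu_j-k_j}$. Hence every quotient $z/z_1$ lies in
\[
\Gamma=\langle i,\ \varpi_1/\bar\varpi_1,\ \dots,\ \varpi_r/\bar\varpi_r\rangle\subset\mathbb C^\times,
\]
since $\varpi_j^{k}\bar\varpi_j^{\nu-k}/\varpi_j^{k'}\bar\varpi_j^{\nu-k'}=(\varpi_j/\bar\varpi_j)^{k-k'}$. The factor $-1$ is also absorbed into $\Gamma$. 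This group has rank at most $r$ (torsion $\{\pm1,\pm i\}$). Thus $(x_2,\dots,x_6)\in\Gamma^5$ is a nondegenerate solution of a unit equation in five variables with all coefficients $1$.

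Now I apply \cite[Theorem~1.1]{ESS}. For $\Gamma\subset(\mathbb C^\times)^k$ of rank $\rho$, the number of nondegenerate solutions of $a_1x_1+\cdots+a_kx_k=1$ is at most $\exp\bigl((6k)^{3k}(\rho+1)\bigr)$. Here $k=5$ and the rank of $\Gamma^5$ is at most $5r$. This gives at most $\exp\bigl(30^{15}(5r+1)\bigr)\le e^{C_*(r+1)}$ solutions for a suitable absolute $C_*$. Multiplying by the $M_n$ choices of $z_1$ yields \eqref{eq:unit:nondegenerate}.

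The main obstacle is matching the hypotheses of \cite{ESS} exactly: the counted solutions must be nondegenerate in their sense (no vanishing subsum of $a_1x_1+\dots+a_kx_k$). I have checked this above via the no-vanishing-proper-subsum condition. I also need to use the version where the rank of $\Gamma^k$ (not of $\Gamma$) enters the exponent, which only changes constants. Everything else is bookkeeping of constants to get linear dependence on $r+1$.
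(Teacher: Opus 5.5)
Your argument is correct and follows the same route as the paper, which cites Bourgain's bound as a consequence of \cite[Theorem~1.1]{ESS}; you supply that derivation by normalizing by $-z_1$, checking that the ESS nondegeneracy condition holds, and counting solutions in $\Gamma^5$, which has rank at most $5r$. One harmless slip: for $n=1$ the inequality $4^5\le 4e^{C_*}$ needs $C_*\ge\log 256$, not $\log 64$, but this does not matter since $C_*=5\cdot 30^{15}$ works and the ESS argument already covers $r=0$.
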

\begin{proof}[Proof of Proposition~\ref{prop:unit}]
By \eqref{eq:energy:def} and \eqref{eq:circle:symmetries}, negating the last three
entries shows that $J_3(P_n)$ counts zero-sum sextuples.
All prime factors of $n$ are split, so $n$ is odd. For $z=a+ib\in P_n$,
\eqref{eq:circle:def} gives $a+b\equiv a^2+b^2=n\equiv1\pmod2$, hence
$z\equiv1\pmod{1+i}$. Thus every vanishing subsum has even length.
A sextuple not counted by $N_6$ has a vanishing subsum of size $2$ or $4$.
This subsum or its complement is an opposite pair.

The remaining four points also form opposite pairs. Indeed, for
$u,v,w,z\in P_n$ with $u+v+w+z=0$, \eqref{eq:circle:def} gives
$\bar u=n/u$ and the analogous identities for $v,w,z$. Therefore
\begin{equation}\label{eq:unit:four-points}
 \begin{aligned}
 (u+v)(u+w)(v+w)
 &=-(uvw+uvz+uwz+vwz)\\
 &=-\frac{uvwz}{n}(\bar u+\bar v+\bar w+\bar z)=0.
 \end{aligned}
\end{equation}
One factor vanishes, so two points are opposite and the remaining two are opposite as well.
There are $6!/((2!)^3\,3!)=15$ pairings of six positions.
For each pairing, choose one point in each pair and then its negative,
giving $M_n^3$ choices by \eqref{eq:circle:symmetries}.
Thus \eqref{eq:unit:four-points} and \eqref{eq:unit:nondegenerate} imply
\begin{equation}\label{eq:unit:partition}
 J_3(P_n)\le15M_n^3+N_6
 \le15M_n^3+M_ne^{C_*(r+1)}.
\end{equation}
Since $M_n\ge4$ by \eqref{eq:product}, dividing \eqref{eq:unit:partition}
by $M_n^3$ gives \eqref{eq:ESS}.
\end{proof}

\section{Applications}\label{sec:applications}
We derive consequences for endpoint eigenfunction bounds, mean-square
convergence of observables, triple sumsets, and nodal length fluctuations.

\subsection{Endpoint \texorpdfstring{$L^6$}{L6} estimates}\label{subsec:lp:l6}
We recover the logarithmic bound
 in \cite{LY}.

\begin{proposition}\label{prop:lp:strong}
For $F$ in \eqref{eq:app:weighted-sum}, there is an absolute constant $C$ such that
\begin{equation}\label{eq:lp:strong-radius}
 \|F\|_6
 \le C\Bigl[\frac{\log(3+n)}{\log\log(3+n)}\Bigr]^{1/6}\|F\|_2.
\end{equation}
\end{proposition}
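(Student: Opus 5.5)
Proposition~\ref{prop:lp:strong} will follow from the logarithmic support bound \eqref{eq:lp:strong-support} combined with the divisor bound for the size of a lattice circle. We normalize $\|F\|_2=1$ and assume $F\ne0$. Write $S=\{z\in P_n:a_z\ne0\}\subset P_n$. Then \eqref{eq:lp:strong-support} gives
\[
 \|F\|_6^6\le C_0\log(2+|S|)\le C_0\log(2+M_n).
\]
It therefore suffices to prove
\[
 \log(2+M_n)\le C\,\frac{\log(3+n)}{\log\log(3+n)}.
\]

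By \eqref{eq:reduction:circle} and \eqref{eq:product}, either $P_n=\varnothing$ or $M_n=4\prod_j(\nu_j+1)\le 4\tau(n)$, where $\tau$ is the divisor function. If $P_n$ is empty, $F=0$ and there is nothing to prove. The classical Wigert bound gives
\[
 \log\tau(n)\le (\log2+o(1))\frac{\log n}{\log\log n}.
\]
Combined with $\log(2+4\tau(n))\le\log6+\log\tau(n)$, this yields the required inequality for $n$ large. For the finitely many small $n$, or uniformly with an absolute constant, we argue directly. Note that $\log(3+n)/\log\log(3+n)$ is bounded below by a positive constant for all $n\ge1$, since $\log\log 4>0$. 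Hence the additive constant $\log 6$ is absorbed, and taking sixth roots finishes the proof.

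If we want to avoid quoting Wigert, we give a self-contained bound on $\prod_j(\nu_j+1)$. Split the primes $p_j\mid n$ at the threshold $y=\log n/(\log\log n)^2$. For $p_j\le y$, use $\nu_j+1\le 2\log n$, so these primes contribute at most $\pi(y)\log(2\log n)=O(\log n/\log\log n)$ to the logarithm. For $p_j>y$, use $\nu_j+1\le 2^{\nu_j}$ together with $\prod_{p_j>y}p_j^{\nu_j}\le n$. This bounds their contribution by $\log 2\cdot\log n/\log y\le C\log n/\log\log n$.

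The main obstacle is essentially bookkeeping: ensuring uniformity for small $n$ and handling $n$ with $3\mid n$. However, \eqref{eq:lp:strong-support} applies to every $n$ through \eqref{eq:lp:integer-circle}, so no separate treatment of $3\mid n$ is needed, and the proof reduces to the divisor estimate.
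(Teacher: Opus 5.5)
Your proposal is correct and follows the paper's proof: both combine \eqref{eq:lp:strong-support} with $|S|\le M_n$ and a bound $\log(2+M_n)\le C\log(3+n)/\log\log(3+n)$. The only difference is that the paper cites this representation bound from \cite{ETT}, whereas you derive it yourself from $M_n\le 4\tau(n)$ and Wigert's divisor bound (or the standard split at $y=\log n/(\log\log n)^2$), which is valid.
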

\begin{proof}
The representation bound \cite{ETT} gives
\begin{equation}\label{eq:lp:cardinality}
 \log(2+M_n)\le C\frac{\log(3+n)}{\log\log(3+n)}.
\end{equation}
Since $|\{z\in P_n:a_z\ne0\}|\le M_n$,
\eqref{eq:lp:strong-support} and \eqref{eq:lp:cardinality}
give \eqref{eq:lp:strong-radius}.
\end{proof}

\begin{corollary}\label{cor:flat}
Let $F$ be given by \eqref{eq:app:weighted-sum} with $M_n>0$. Then
\begin{equation}\label{eq:flat}
 \|F\|_6\le C^{1/6}\sqrt{M_n}\,\max_{z\in P_n}|a_z|.
\end{equation}
In particular,
\begin{equation}\label{eq:app:flatness}
 \max_{z\in P_n}|a_z|\le KM_n^{-1/2}\|F\|_2
 \quad\Longrightarrow\quad
 \|F\|_6\le C^{1/6}K\|F\|_2\qquad(K\ge1).
\end{equation}
\end{corollary}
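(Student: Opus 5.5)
The plan is to reduce Corollary~\ref{cor:flat} to the cubic energy bound of Theorem~\ref{thm:main} through a domination argument on the Fourier side. It is tempting to invoke the weak bound \eqref{eq:lp:weak}, but that only gives a logarithmic loss at $p=6$ via \eqref{eq:lp:strong-support}. The point of the corollary is that flat coefficients avoid this loss, because $J_3(P_n)\le CM_n^3$ holds with no logarithm.

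Set $\mu=\max_{z\in P_n}|a_z|$. First, expand $\|F\|_6^6=\int|F|^6$ by orthogonality, exactly as in \eqref{eq:energy}:
\[
 \int_{\T^2}|F|^6=\sum_{\substack{z_1,\ldots,z_6\in P_n\\ z_1+z_2+z_3=z_4+z_5+z_6}}
 a_{z_1}a_{z_2}a_{z_3}\overline{a_{z_4}a_{z_5}a_{z_6}}.
\]
Next, bound each term in absolute value by $\mu^6$. The number of terms is exactly $J_3(P_n)$ by \eqref{eq:energy:def}. This gives
\[
 \|F\|_6^6\le\mu^6J_3(P_n).
\]
Then apply \eqref{eq:main} to get $\|F\|_6^6\le C\mu^6M_n^3$. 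Taking sixth roots yields \eqref{eq:flat}.

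For \eqref{eq:app:flatness}, substitute the hypothesis $\mu\le KM_n^{-1/2}\|F\|_2$ into \eqref{eq:flat}. The factors of $M_n$ cancel, giving $\|F\|_6\le C^{1/6}K\|F\|_2$. The restriction $K\ge1$ is not needed for this step. It only records that the hypothesis is consistent: Parseval \eqref{eq:app:parseval} gives $\|F\|_2^2\le M_n\mu^2$, so $\mu\ge M_n^{-1/2}\|F\|_2$, which forces $K\ge1$ whenever $F\ne0$.

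There is no real obstacle beyond Theorem~\ref{thm:main}. The only point to check is that the triangle inequality on the Fourier expansion is legitimate, and it is, because the expansion is a finite sum.
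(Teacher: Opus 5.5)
Your proposal is correct and is essentially the paper's own proof. The paper likewise expands $\|F\|_6^6$ by Fourier orthogonality, bounds each of the $J_3(P_n)$ terms by $\max_{z}|a_z|^6$, applies \eqref{eq:main}, takes sixth roots, and substitutes the flatness hypothesis. Your added remark that Parseval forces $K\ge1$ whenever $F\ne0$ is accurate, though the paper does not need it.
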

\begin{proof}
Fourier orthogonality, \eqref{eq:app:weighted-sum}, and \eqref{eq:main} give
\begin{equation}\label{eq:flat:expansion}
 \begin{aligned}
 \|F\|_6^6
 &=\sum_{\substack{z_1,\ldots,z_6\in P_n\\
                  z_1+z_2+z_3=z_4+z_5+z_6}}
       a_{z_1}a_{z_2}a_{z_3}\overline{a_{z_4}a_{z_5}a_{z_6}}\\
 &\le J_3(P_n)\max_{z\in P_n}|a_z|^6
 \le CM_n^3\max_{z\in P_n}|a_z|^6.
 \end{aligned}
\end{equation}
Taking sixth roots proves \eqref{eq:flat}, hence \eqref{eq:app:flatness}.
\end{proof}

The uniform $L^4$ bounds of Cooke and Zygmund \cite{Cooke,Zygmund}
are the classical case of Bourgain's conjecture \cite{BourgainMoment} on uniform bounds for every
finite $p$.
Theorem~\ref{thm:lp:weak} extends the classical uniform range to $p<6$.
At the endpoint $p=6$, Corollary~\ref{cor:flat} gives uniform bounds under \eqref{eq:app:flatness}.

\subsection{Mean-square convergence of observables}
\begin{corollary}\label{cor:observables}
For each $n$ with $P_n\ne\varnothing$, let
$(\phi_{n,j})_{j=1}^{M_n}$ be any orthonormal basis of the eigenspace
of $-\Delta$ on $\T^2$ with eigenvalue $4\pi^2n$.
Every fixed $V\in L^{6/5}(\T^2)$ satisfies
\begin{equation}\label{eq:observables:convergence}
 \frac1{M_n}\sum_{j=1}^{M_n}
 \Bigl|\int_{\T^2}V|\phi_{n,j}|^2-\int_{\T^2}V\Bigr|^2
 \longrightarrow0
 \qquad(M_n\to\infty),
\end{equation}
uniformly over the choice of these bases.
\end{corollary}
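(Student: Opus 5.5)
Write $\phi_{n,j}=\sum_{z\in P_n}a^{(j)}_z e^{2\pi i z\cdot x}$; the coefficient vectors $(a^{(j)}_z)_z$, as $j$ varies, form the rows of a unitary $M_n\times M_n$ matrix. I would reduce to trigonometric polynomials by density and then treat polynomials with an exact Fourier computation.

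\emph{Density reduction.} Set $T_n(V)=\bigl(M_n^{-1}\sum_j|\int V|\phi_{n,j}|^2-\int V|^2\bigr)^{1/2}$. Then $T_n$ is a seminorm in $V$, so it is sublinear. By H\"older with exponents $6/5$ and $6$,
\[
\Bigl|\int V|\phi|^2\Bigr|\le\|V\|_{6/5}\|\phi\|_{12/5}^2 .
\]
Theorem~\ref{thm:lp:weak} with $p=12/5<6$ bounds $\|\phi\|_{12/5}\le C\|\phi\|_2=C$. Hence $T_n(V)\le (C^2+1)\|V\|_{6/5}$, uniformly in $n$ and in the choice of basis. Trigonometric polynomials are dense in $L^{6/5}$, so it suffices to treat $V(x)=e^{2\pi i v\cdot x}$ with $v\ne0$ fixed. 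The constant term of a polynomial cancels with $\int V$, and a finite sum of such characters is handled by sublinearity of $T_n$.

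\emph{Single character.} Compute
\[
\int V|\phi_{n,j}|^2=\sum_{z-w=-v}a^{(j)}_z\overline{a^{(j)}_w}=\langle B a^{(j)},a^{(j)}\rangle,
\]
where $B$ is the partial permutation matrix with entries $B_{wz}=1$ exactly when $z,w\in P_n$ and $w-z=v$. Unitarity of the basis then gives
\[
\sum_j|\langle Ba^{(j)},a^{(j)}\rangle|^2\le\sum_j\|Ba^{(j)}\|^2=\|B\|_{HS}^2=\#\{(z,w)\in P_n^2:z-w=v\}.
\]
For fixed $v\ne0$ this count is at most $2$: two distinct circle points with a prescribed difference $v$ are determined by the intersection of $P_n$ with its translate $P_n+v$, and two distinct circles meet in at most two points. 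Therefore $T_n(V)^2\le 2/M_n\to0$, uniformly in the basis.

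The estimates are all routine. The only real input is the uniform $L^{12/5}$ bound, which is what permits $L^{6/5}$ observables; the classical $L^4$ bound would only allow $V\in L^2$. I expect no obstacle beyond verifying that the density argument is uniform in the basis, which holds because the seminorm bound $T_n(V)\le(C^2+1)\|V\|_{6/5}$ does not depend on the basis.
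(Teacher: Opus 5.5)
\textbf{Gap: the H\"older step in your density reduction uses the wrong exponent.} Pairing $V\in L^{6/5}$ with its dual exponent $6$ gives
\[
\Bigl|\int V|\phi|^2\Bigr|\le\|V\|_{6/5}\,\bigl\||\phi|^2\bigr\|_6=\|V\|_{6/5}\|\phi\|_{12}^2,
\]
not $\|V\|_{6/5}\|\phi\|_{12/5}^2$. The quantity $\|\phi\|_{12/5}^2=\||\phi|^2\|_{6/5}$ pairs with $V\in L^6$. Your own remark exposes the slip: since $\|\phi\|_{12/5}\le\|\phi\|_4$, the Cooke--Zygmund bound would already give $L^{6/5}$ observables if your inequality were right.

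By duality, a uniform bound $|\int V|\phi|^2|\le C\|V\|_{6/5}$ over $L^2$-normalized eigenfunctions is equivalent to a uniform $L^{12}$ bound, which is open. Theorem~\ref{thm:lp:weak} only gives $|\phi|^2\in L^{3,\infty}$ uniformly. Your seminorm bound therefore holds only for $V\in L^{3/2,1}$, for example $V\in L^p$ with $p>3/2$. That is weaker than what the $\varepsilon$-loss energy bounds already give. No estimate on individual eigenfunctions can reach the endpoint along your route.

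\textbf{Missing idea.} The $6/5$ endpoint comes from averaging over the basis together with the cubic energy bound of Theorem~\ref{thm:main}, not from Theorem~\ref{thm:lp:weak}. Apply your Hilbert--Schmidt argument to the whole form $(u,w)\mapsto\int Vu\bar w-\widehat V(0)\int u\bar w$ on the eigenspace, not to one character at a time. In any orthonormal basis, the squared diagonal entries sum to at most the squared Hilbert--Schmidt norm. In the exponential basis this norm, divided by $M_n$, is
\[
S_n(V)=\frac1{M_n}\sum_{z\ne w}|\widehat V(z-w)|^2 ,
\]
so $T_n(V)^2\le S_n(V)$. Then
\[
S_n(V)+|\widehat V(0)|^2=\int\!\!\int V(x)\overline{V(y)}\,K_n(x-y)\,dx\,dy,\qquad K_n=\frac{|f_{P_n}|^2}{M_n},
\]
and $\|K_n\|_3=J_3(P_n)^{1/3}/M_n\le C^{1/3}$ by the cubic energy bound. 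Young's inequality, with $\tfrac56+\tfrac56+\tfrac13=2$, gives $T_n(V)^2\le C^{1/3}\|V\|_{6/5}^2$ uniformly in $n$ and in the basis.

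With this replacement, the rest of your argument goes through and coincides with the paper's proof. That rest is the single-character estimate (at most two pairs with a given nonzero difference, so $T_n^2\le 2/M_n$) and the density argument.
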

\begin{proof}
Use the Fourier coefficients in \eqref{eq:fourier:coefficients}.
The sum of the squared diagonal entries of a matrix is at most its
squared Hilbert--Schmidt norm. Applying this to the form
$(u,w)\mapsto\int_{\T^2}Vu\overline w-\widehat V(0)\int_{\T^2}u\overline w$
on the eigenspace, and using unitary invariance, gives
\begin{equation}\label{eq:observables:variance}
 \frac1{M_n}\sum_{j=1}^{M_n}
 \Bigl|\int_{\T^2}V|\phi_{n,j}|^2-\widehat V(0)\Bigr|^2
 \le S_n(V):=\frac1{M_n}
       \sum_{\substack{z,w\in P_n\\z\ne w}}|\widehat V(z-w)|^2.
\end{equation}
By \eqref{eq:fourier:sum}, \eqref{eq:energy}, and \eqref{eq:main},
\begin{equation}\label{eq:observables:kernel}
 K_n(x):=\frac{|f_{P_n}(x)|^2}{M_n},
 \qquad
 \|K_n\|_3=\frac{J_3(P_n)^{1/3}}{M_n}\le C^{1/3}.
\end{equation}
Fourier expansion, H\"older's inequality, and Young's convolution
inequality give
\begin{equation}\label{eq:observables:young}
 \begin{split}
 S_n(V)+|\widehat V(0)|^2
 &=\int_{\T^2}\int_{\T^2}
       V(x)\overline{V(y)}K_n(x-y)\,dx\,dy\\
 &\le\|K_n\|_3\|V\|_{6/5}^2
 \le C^{1/3}\|V\|_{6/5}^2.
 \end{split}
\end{equation}
For $v\ne0$, the conditions $z,z-v\in P_n$ imply
$2z\cdot v=|v|^2$, so there are at most two such $z$.
Consequently Parseval's identity yields
\begin{equation}\label{eq:observables:L2}
 S_n(W)\le\frac2{M_n}\sum_{v\ne0}|\widehat W(v)|^2
          \le\frac2{M_n}\|W\|_2^2
 \qquad(W\in L^2(\T^2)).
\end{equation}
For any trigonometric polynomial $W$, the triangle inequality in
\eqref{eq:observables:variance}, followed by
\eqref{eq:observables:young} and \eqref{eq:observables:L2}, gives
\[
 S_n(V)^{1/2}
 \le C^{1/6}\|V-W\|_{6/5}
       +(2/M_n)^{1/2}\|W\|_2.
\]
First let $M_n\to\infty$, then approximate $V$ in $L^{6/5}$ by $W$.
Equation~\eqref{eq:observables:variance} proves
\eqref{eq:observables:convergence}, uniformly over the bases.
\end{proof}

Hezari--Rivi\`ere \cite{HR} proved an
$O(M_n^{-1})$ mean-square bound for $L^2$ observables on individual
eigenspaces. The energy estimates of Cushman--Demeter--Wu
\cite{CDW} and Li--Yung \cite{LY}
already imply convergence for every fixed $V\in L^p(\T^2)$,
$p>6/5$.
On a sequence of relative density one among the represented energy
levels, the higher-correlation bounds of Cammarota--Klurman--Wigman
\cite[Theorem~1.3]{CKW} imply convergence
for every $p>1$.
Corollary~\ref{cor:observables} reaches the endpoint $p=6/5$
along every sequence with $M_n\to\infty$, uniformly over all
orthonormal bases. 

\subsection{Triple sumsets}
\begin{corollary}\label{cor:sumsets}
Let $n_1,n_2,n_3$ be positive integers and $0<\delta_j\le1$ for $j=1,2,3$.
Suppose that
$S_j\subset P_{n_j}$ and
$|S_j|\ge\delta_jM_{n_j}>0$ for $j=1,2,3$. Then
\begin{equation}\label{eq:sumsets:density}
 C^{-1}\delta_1\delta_2\delta_3|S_1||S_2||S_3|
 \le |S_1+S_2+S_3|\le |S_1||S_2||S_3|.
\end{equation}
In particular, taking $S_j=P_{n_j}$ and $\delta_j=1$ gives
\begin{equation}\label{eq:sumsets}
 C^{-1}\prod_{j=1}^3M_{n_j}
 \le |P_{n_1}+P_{n_2}+P_{n_3}|\le\prod_{j=1}^3M_{n_j}.
\end{equation}
\end{corollary}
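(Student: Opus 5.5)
The upper bound in \eqref{eq:sumsets:density} is trivial, since $(z_1,z_2,z_3)\mapsto z_1+z_2+z_3$ maps $S_1\times S_2\times S_3$ onto $S_1+S_2+S_3$. For the lower bound, I will use the standard Cauchy--Schwarz relation between sumset size and additive energy. Write $r(v)=\#\{(z_1,z_2,z_3)\in S_1\times S_2\times S_3:z_1+z_2+z_3=v\}$. Then $\sum_v r(v)=|S_1||S_2||S_3|$, and Cauchy--Schwarz gives
\[
 (|S_1||S_2||S_3|)^2\le|S_1+S_2+S_3|\sum_v r(v)^2 .
\]
Moreover, by orthogonality,
\[
 \sum_v r(v)^2=\int_{\T^2}|f_{S_1}f_{S_2}f_{S_3}|^2 .
\]

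Next I bound this mixed energy. Hölder with exponents $3,3,3$ gives
\[
 \int|f_{S_1}|^2|f_{S_2}|^2|f_{S_3}|^2\le\prod_{j=1}^3\|f_{S_j}\|_6^2=\prod_{j=1}^3J_3(S_j)^{1/3}
\]
by \eqref{eq:energy}. Every solution counted by $J_3(S_j)$ is also counted by $J_3(P_{n_j})$, so Theorem~\ref{thm:main} gives $J_3(S_j)\le J_3(P_{n_j})\le CM_{n_j}^3\le C\delta_j^{-3}|S_j|^3$. Hence
\[
 \sum_v r(v)^2\le C\,\delta_1^{-1}\delta_2^{-1}\delta_3^{-1}|S_1||S_2||S_3|,
\]
and dividing yields $|S_1+S_2+S_3|\ge C^{-1}\delta_1\delta_2\delta_3|S_1||S_2||S_3|$.

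No step is a real obstacle, since all the weight sits in Theorem~\ref{thm:main}. The one point to check is that $J_3(S_j)\le J_3(P_{n_j})$: this holds because restricting to $S_j\subset P_{n_j}$ only removes solutions. The bound \eqref{eq:sumsets} is then the special case $S_j=P_{n_j}$, $\delta_j=1$.
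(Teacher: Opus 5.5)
Your proposal is correct and follows the paper's own proof essentially line for line. Both use Cauchy--Schwarz on the representation counts $r(v)$, identify $\sum_v r(v)^2$ with $\int_{\T^2}|f_{S_1}f_{S_2}f_{S_3}|^2$, apply H\"older to get $\prod_j J_3(S_j)^{1/3}$, and then use $J_3(S_j)\le J_3(P_{n_j})\le CM_{n_j}^3\le C\delta_j^{-3}|S_j|^3$ from Theorem~\ref{thm:main}.
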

\begin{proof}
For $v\in\Z^2$, let $r(v)$ count the ordered representations
$v=z_1+z_2+z_3$ with $z_j\in S_j$.
Cauchy--Schwarz, Parseval and H\"older, followed by
\eqref{eq:energy:def}, \eqref{eq:energy} and \eqref{eq:main}, give
\begin{equation}\label{eq:app:mixed-energy}
 \begin{aligned}
 \left(\prod_{j=1}^3|S_j|\right)^2
 &=\left(\sum_{v\in S_1+S_2+S_3}r(v)\right)^2
 \le |S_1+S_2+S_3|\sum_vr(v)^2\\
 &=|S_1+S_2+S_3|\int_{\T^2}|f_{S_1}f_{S_2}f_{S_3}|^2
 \le |S_1+S_2+S_3|\prod_{j=1}^3J_3(S_j)^{1/3}\\
 &\le C|S_1+S_2+S_3|\prod_{j=1}^3M_{n_j}
 \le C|S_1+S_2+S_3|\prod_{j=1}^3(\delta_j^{-1}|S_j|).
 \end{aligned}
\end{equation}
Division gives the lower bound in \eqref{eq:sumsets:density}; counting
the ordered triples gives the upper bound.
\end{proof}

By \eqref{eq:app:mixed-energy}, Bourgain's bound
\cite[Theorem~2.2]{KKW} gives
$|P_n+P_n+P_n|/M_n^2\to\infty$ as $M_n\to\infty$, while
Bombieri--Bourgain \cite{BB} gives a lower bound of
order $M_n^{5/2}$.
The estimates of Cushman--Demeter--Wu \cite[Corollary~1.3]{CDW}
and Li--Yung \cite[Theorem~1.1]{LY} improve this to lower bounds
of orders $M_n^{3-\varepsilon}$ and $M_n^3/\log(2+M_n)$,
respectively.
Corollary~\ref{cor:sumsets} removes the remaining logarithm and
establishes the optimal cubic order for lattice circles.
It also gives the corresponding product bound for three possibly
different radii and subsets of fixed positive relative density,
with the explicit dependence on the densities in
\eqref{eq:sumsets:density}.

\subsection{Nodal length of arithmetic random waves}
Fix $n\ge1$ with $P_n\ne\varnothing$, and choose one representative
from each antipodal pair. Write
\begin{equation}\label{eq:app:arw-data}
 \lambda_n=4\pi^2n,\qquad
 P_n=\bigsqcup_{j=1}^{M_n/2}\{z_j,-z_j\}.
\end{equation}
On a probability space $\Omega$, choose complex Gaussian coefficients
so that the $M_n$ real variables below are independent with standard
Gaussian law $N(0,1)$:
\begin{equation}\label{eq:app:gaussian-coefficients}
 \sqrt2\Re a_{z_j},\ \sqrt2\Im a_{z_j}\sim N(0,1),\qquad
 a_{-z_j}=\overline{a_{z_j}}\quad(1\le j\le M_n/2).
\end{equation}
Write $\mathbb E$ and $\operatorname{Var}$ for expectation and variance
on this probability space.
The arithmetic random wave is the real Gaussian eigenfunction
\begin{equation}\label{eq:arw}
 T_n(x)=\frac1{\sqrt{M_n}}\sum_{z\in P_n}a_z e^{2\pi i z\cdot x}.
\end{equation}
In particular, \eqref{eq:app:gaussian-coefficients} gives
$\mathbb E(a_za_w)=1$ if $w=-z$, and $0$ otherwise. Substitution in
\eqref{eq:arw} shows that $T_n$ has variance one and covariance
$\mathbb E[T_n(x)T_n(y)]=r_n(x-y)$, where
\begin{equation}\label{eq:arw:parameters}
 r_n(x)=M_n^{-1}f_{P_n}(x),\qquad
 \eta_n=\frac1{M_n}\sum_{z\in P_n}\cos(4\arg z)\in[-1,1].
\end{equation}
Let $\mathcal L_n$ be the length of the zero set of $T_n$ in $\T^2$.
For each integer $q\ge0$, write $\mathcal L_n[q]$ for its orthogonal
projection onto the $q$th Wiener chaos: the span of multivariate
probabilists' Hermite polynomials of total degree $q$ in the $M_n$
independent standard real Gaussian variables in
\eqref{eq:app:gaussian-coefficients}. All $L^2(\Omega)$ norms are taken
with respect to the underlying probability measure.

\begin{corollary}\label{cor:nodal}
Uniformly over all $n\ge1$ with $P_n\ne\varnothing$,
\begin{equation}\label{eq:nodal:variance}
 \operatorname{Var}(\mathcal L_n)
 =\frac{1+\eta_n^2}{512}\frac{\lambda_n}{M_n^2}
     +O\!\left(\frac{\lambda_n}{M_n^3}\right),
\end{equation}
and
\begin{equation}\label{eq:nodal:tail}
 \mathbb E\left|\mathcal L_n-\mathbb E\mathcal L_n
                         -\mathcal L_n[4]\right|^2
 \le C\frac{\lambda_n}{M_n^3}.
\end{equation}
Consequently, as $M_n\to\infty$,
\begin{equation}\label{eq:nodal:normalized}
 \left\|
 \frac{\mathcal L_n-\mathbb E\mathcal L_n}
                {\sqrt{\operatorname{Var}(\mathcal L_n)}}
 -\frac{\mathcal L_n[4]}
                {\sqrt{\operatorname{Var}(\mathcal L_n[4])}}
 \right\|_{L^2(\Omega)}\le C M_n^{-1/2}.
\end{equation}

\end{corollary}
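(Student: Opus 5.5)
The plan is to follow the Wiener chaos expansion of the nodal length, as in Krishnapur--Kurlberg--Wigman and Marinucci--Peccati--Rossi--Wigman, and to replace the logarithmic or $\varepsilon$-loss sixth-moment inputs by Theorem~\ref{thm:main}. Write $\mathcal L_n=\sum_{q\ge0}\mathcal L_n[q]$. By the symmetry $T_n\mapsto-T_n$ the odd chaoses vanish. The second chaos is also known to vanish on $\T^2$, because $|P_n|$-points lie on a single circle, so $\int|\nabla T_n|^2$ and $\int T_n^2$ are proportional. Hence
\[
 \mathcal L_n-\mathbb E\mathcal L_n-\mathcal L_n[4]=\sum_{q\ge6,\,q\text{ even}}\mathcal L_n[q],
\]
and \eqref{eq:nodal:tail} reduces to bounding $\sum_{q\ge6}\mathbb E|\mathcal L_n[q]|^2$. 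The known computation $\operatorname{Var}(\mathcal L_n[4])=\frac{1+\eta_n^2}{512}\frac{\lambda_n}{M_n^2}+O(\lambda_n/M_n^3)$ is purely combinatorial and uses only $J_2(P_n)=3M_n^2-2M_n$ (Zygmund), so it carries over verbatim. Then \eqref{eq:nodal:variance} follows from this, orthogonality of chaoses, and \eqref{eq:nodal:tail}.

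For the higher chaoses I would use the standard bound from Hermite expansion and the diagram formula. Each $\mathcal L_n[q]$ is a finite sum of integrals over $\T^2$ of Hermite polynomials in $T_n$ and $\nabla T_n/\sqrt{\lambda_n/2}$. Its second moment is then bounded by
\[
 C\lambda_n\,\beta_q\int_{\T^2}\mathcal R_n(x)^q\,dx,
\]
where $\mathcal R_n$ is the maximum modulus of $r_n$ and its normalized first and second derivatives, and $\beta_q$ is summable in $q$ with geometric-type decay. The crude estimate $|\mathcal R_n|\le1$ gives $\int\mathcal R_n^q\le\int\mathcal R_n^6$ for $q\ge6$, so everything reduces to
\[
 \int_{\T^2}|r_n|^6\le C M_n^{-3},
\]
together with its derivative analogues. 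Here $\int|r_n|^6=M_n^{-6}J_3(P_n)\le CM_n^{-3}$ by \eqref{eq:energy} and Theorem~\ref{thm:main}.

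For the derivatives, $\partial_i r_n/\sqrt{\lambda_n}$ is $M_n^{-1}\sum_z(z_i/\sqrt n)e^{2\pi iz\cdot x}$, and similarly for second derivatives with bounded weights. The sixth moment of such a sum expands over the same sextuples counted by $J_3(P_n)$ with weights of modulus at most $1$, as in \eqref{eq:flat:expansion}. So each such sixth moment is also $\le CM_n^{-3}$.

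The main obstacle is the summability over $q$. The Hermite coefficients of the length functional grow, so one cannot simply sum $\int\mathcal R_n^q$ against arbitrary constants. I would handle this as in the literature, by splitting $\T^2$ into small cubes of side about $\lambda_n^{-1/2}$ and separating singular cubes, where $|r_n|$ or a derivative exceeds a small $\epsilon$, from nonsingular ones. On nonsingular cubes the series converges because $\sum_q\beta_q\epsilon^{q-6}<\infty$. Singular cubes are controlled by Chebyshev: their measure is at most $\epsilon^{-6}\int\mathcal R_n^6\le C\epsilon^{-6}M_n^{-3}$. On each singular cube the full length variance is bounded by $C\lambda_n\cdot(\text{area})$ via Kac--Rice, which is the standard local bound. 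This yields \eqref{eq:nodal:tail} with $C\lambda_n/M_n^3$ and no logarithm, since only the sixth moment enters. Finally, \eqref{eq:nodal:normalized} follows by the triangle inequality: the difference of normalizations is controlled by $\|\text{tail}\|_2/\sqrt{\operatorname{Var}}\lesssim M_n^{-3/2}/M_n^{-1}=M_n^{-1/2}$. This uses $1+\eta_n^2\ge1$, so $\operatorname{Var}(\mathcal L_n[4])\gtrsim\lambda_n/M_n^2$ once $M_n$ is large.
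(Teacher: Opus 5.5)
Your proposal is correct and follows the same route as the paper. Both arguments use the vanishing of the odd and second chaoses and the known quadratic-form computation of $\operatorname{Var}(\mathcal L_n[4])$. Both then use the tail bound $V_{\ge6}\le C\lambda_n\int_{\T^2}r_n^6=C\lambda_nJ_3(P_n)/M_n^6$ together with Theorem~\ref{thm:main}, and finish the normalized statement with $V_{\ge6}/V_4\le CM_n^{-1}$.

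The only difference is that the paper cites the tail inequality directly from Peccati--Rossi, whereas you sketch the Marinucci--Peccati--Rossi--Wigman argument behind it. In that sketch, the diagram-formula constants grow geometrically in $q$, so drop your first claim that $\beta_q$ is summable and rely on the $\epsilon$-smallness argument you give next. That argument should be phrased over pairs of small squares $(Q,Q')$. Singular pairs are then handled by Cauchy--Schwarz and the local bound $\mathbb E\mathcal L_Q^2\le C/\lambda_n$, where $\mathcal L_Q$ is the nodal length in a square $Q$ of side about $\lambda_n^{-1/2}$.
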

\begin{proof}
The $L^2(\Omega)$ chaos expansion and the vanishing of the odd and
second components \cite{PR} give
\begin{equation}\label{eq:app:chaos-expansion}
 \mathcal L_n-\mathbb E\mathcal L_n
 =\mathcal L_n[4]+\mathcal L_n[\ge6],\qquad
 \mathcal L_n[\ge6]:=\sum_{q\ge3}\mathcal L_n[2q].
\end{equation}
By orthogonality and \cite{PR},
\begin{equation}\label{eq:app:tail-input}
 V_{\ge6}:=\mathbb E|\mathcal L_n[\ge6]|^2
 =\sum_{q\ge3}\operatorname{Var}(\mathcal L_n[2q])
 \le C\lambda_n\int_{\T^2}r_n(x)^6\,dx.
\end{equation}
Equations~\eqref{eq:arw:parameters}, \eqref{eq:energy},
and \eqref{eq:main} imply
\begin{equation}\label{eq:app:correlation-sixth}
 \int_{\T^2}r_n(x)^6\,dx
 =\frac{J_3(P_n)}{M_n^6}\le C M_n^{-3},
\end{equation}
since $P_n=-P_n$ by \eqref{eq:circle:symmetries}.
Thus \eqref{eq:app:chaos-expansion}--\eqref{eq:app:correlation-sixth}
prove \eqref{eq:nodal:tail}.

Let $V_4=\operatorname{Var}(\mathcal L_n[4])$.
The quadratic-form representation \cite{PR}
and independence in \eqref{eq:app:gaussian-coefficients} give
\begin{equation}\label{eq:nodal:fourth}
 \begin{aligned}
 \frac{512M_n^2}{\lambda_n}V_4
 &=\frac2{M_n^2}\sum_{z,w\in P_n}
      \cos^2\!\bigl(2(\arg z-\arg w)\bigr)+O(M_n^{-1})\\
 &=1+\eta_n^2+O(M_n^{-1}).
 \end{aligned}
\end{equation}
The terms with coinciding indices in the Gaussian moment expansion
contribute $O(M_n)/M_n^2=O(M_n^{-1})$; the last equality uses
\eqref{eq:circle:symmetries} and \eqref{eq:arw:parameters}.
All constants are independent of $n$.
Orthogonality in \eqref{eq:app:chaos-expansion} gives
\begin{equation}\label{eq:app:variance-split}
 \operatorname{Var}(\mathcal L_n)=V_4+V_{\ge6},\qquad
 \mathbb E[\mathcal L_n[4]\mathcal L_n[\ge6]]=0.
\end{equation}
Together with \eqref{eq:app:tail-input}--\eqref{eq:app:correlation-sixth}
and \eqref{eq:nodal:fourth}, this proves \eqref{eq:nodal:variance}.

Since $1+\eta_n^2\ge1$, for sufficiently large $M_n$ these bounds give
\begin{equation}\label{eq:app:variance-ratio}
 V_4\ge C\lambda_nM_n^{-2},\qquad
 V_{\ge6}/V_4\le C M_n^{-1}.
\end{equation}
By \eqref{eq:app:chaos-expansion} and \eqref{eq:app:variance-split},
\begin{equation}\label{eq:app:normalized-chaos-distance}
 \begin{aligned}
 &\left\|
 \frac{\mathcal L_n-\mathbb E\mathcal L_n}{\sqrt{V_4+V_{\ge6}}}
 -\frac{\mathcal L_n[4]}{\sqrt{V_4}}
 \right\|_{L^2(\Omega)}^2=2\left(1-\sqrt{\frac{V_4}{V_4+V_{\ge6}}}\right)
 \le\frac{V_{\ge6}}{V_4}\le C M_n^{-1}.
 \end{aligned}
\end{equation}
Taking square roots proves \eqref{eq:nodal:normalized}.
\end{proof}

Rudnick and Wigman \cite[Theorem~6.1]{RW} bounded the variance,
and Krishnapur, Kurlberg and Wigman \cite[Theorem~1.1]{KKW}
determined its leading term.
Marinucci, Peccati, Rossi and Wigman
\cite{MPRW} established the
dominance of the fourth Wiener chaos and the resulting non-Gaussian
limits, and Peccati--Rossi \cite{PR}
made these approximations quantitative.
Combined with their tail estimate \eqref{eq:app:tail-input},
the energy bounds of Bombieri--Bourgain \cite{BB},
Cushman--Demeter--Wu \cite{CDW}, and
Li--Yung \cite{LY} give variance remainders of orders
$\lambda_nM_n^{-5/2}$, $\lambda_nM_n^{-3+\varepsilon}$, and
$\lambda_nM_n^{-3}\log(2+M_n)$, respectively.
Corollary~\ref{cor:nodal} removes the remaining logarithm and gives
the $O(M_n^{-1/2})$ approximation by the normalized fourth chaos
in \eqref{eq:nodal:normalized}, uniformly over all sequences with
$M_n\to\infty$.

\end{document}